\newtheorem{theorem}{Theorem}[section]
\newtheorem{lemma}[theorem]{Lemma}
\newtheorem{proposition}[theorem]{Proposition}
\theoremstyle{definition}
\newtheorem{definition}{Definition}[section]
\newtheorem{remark}{Remark}[section]
\newtheorem{algorithm}{Algorithm}[section]
\newcommand{\set}[1]{\mathsf{#1}}
\newcommand{\ud}{\, \mathrm{d} }
\newcommand{\pd}{ \partial}
\newcommand{\e}{\mathrm{e}}
\newcommand{\realset}{\mathbb{R}}
\newcommand{\inner}[1]{\langle\!\langle #1 \rangle\!\rangle}
\newcommand{\pair}[1]{\langle #1 \rangle}
\providecommand{\norm}[1]{\lVert#1\rVert}
\providecommand{\abs}[1]{\lvert#1\rvert}
\newcommand{\Id}{\mathrm{Id}}
\newcommand{\vect}[1]{\boldsymbol{#1}}
\newcommand{\vq}{\vect{q}}
\newcommand{\vp}{\vect{p}}
\newcommand{\T}{T}
\newcommand{\manifold}[1]{\mathscr{#1}} 
\newcommand{\Q}{\manifold{Q}}
\newcommand{\Smanifold}{\manifold{S}}
\newcommand{\TQ}{\T\!\Q}
\newcommand{\coTQ}{\T^{*}\!\Q}
\newcommand{\PP}{\manifold{P}}
\renewcommand{\P}{\PP}
\newcommand{\smoothfun}{\mathcal{F}}
\newcommand{\Fcal}{\smoothfun}
\newcommand{\Xcal}{\mathfrak{X}}
\newcommand{\Diff}{\mathrm{Diff}}
\newcommand{\DiffSp}{\Diff_\mathrm{Sp}}
\newcommand{\DiffHam}{\Diff_\mathrm{Ham}}
\newcommand{\XcalSp}{\Xcal_\mathrm{Sp}}
\newcommand{\XcalHam}{\Xcal_\mathrm{Ham}}
\newcommand{\pois}[1]{\{#1\}}
\newcommand{\LieD}[1]{\pounds_{#1}}
\newcommand{\interior}[1]{\mathrm{i}_{#1}}
\newcommand{\trans}{\top}
\newcommand{\Ad}{\mathrm{Ad}}
\newcommand{\G}{\manifold G}
\newcommand{\Gsub}{\manifold H}
\newcommand{\g}{\mathfrak{g}}
\newcommand{\gsub}{\mathfrak{h}}
\newcommand{\cog}{\mathfrak{g}^*}
\newcommand{\eps}{\varepsilon}
\newcommand{\Mcal}{\mathcal{M}}
\newcommand{\Dcal}{\mathcal{D}}
\newcommand{\DiffG}{\Diff_{\G}}
\newcommand{\XcalG}{\Xcal_{\G}}
\newcommand{\DiffJ}{\Diff_{\set{J}}}
\newcommand{\XcalJ}{\Xcal_{\set{J}}}
\newcommand{\DiffGJ}{\Diff_{\G,\set{J}}}
\newcommand{\XcalGJ}{\Xcal_{\G,\set{J}}}
\newcommand{\mybreve}[1]{{#1}^\Delta}
\newcommand{\PhiThreeS}{\Phi_{\textrm{\footnotesize 3S}}}
\newcommand{\PhiTwoS}{\Phi_{\textrm{\footnotesize 2S}}}
\newcommand{\PhiRKS}{\Phi_{\textrm{\footnotesize RKS}}}
\newcommand{\PhiSV}{\Phi_{\textrm{\footnotesize SV}}}
\title{Geometric Integration of Hamiltonian Systems \\ Perturbed by Rayleigh Damping}
\author{Klas MODIN$^{a}$}
\author{Gustaf SÖDERLIND$^{b}$}
\affil{
	$^a$School of Engineering and Advanced Technology, Massey University \\
	Private Bag 11~222, Palmerston North 4442, New Zealand \\
	E--mail: {\selectfont\ttfamily\itshape k.e.f.modin@massey.ac.nz}
}
\affil{
	$^b$Centre for Mathematical Sciences, Lund University \\
	Box 118, SE--221 00 Lund, Sweden \\
	E--mail: {\selectfont\ttfamily\itshape gustaf.soderlind@na.lu.se}
}
\date{\today}
\newcommand{\keywords}[1]{\par\smallskip {\bf Key-words:} #1}
\newcommand{\subclass}[1]{\par\smallskip {\bf 2000 MSC:} #1}
\begin{document}

\maketitle

\begin{abstract}
Explicit and semi-explicit geometric integration schemes for dissipative
perturbations of Hamiltonian systems 
are analyzed.
The dissipation is characterized by a small parameter $\varepsilon$, 
and the schemes under study preserve the symplectic structure in the case $\varepsilon=0$.
In the case $0<\varepsilon\ll 1$ the energy dissipation rate is shown
to be asymptotically correct by backward error analysis.
Theoretical results on monotone decrease of the modified
Hamiltonian function for small enough step sizes are given.
Further, an analysis proving near conservation of
relative equilibria for small enough step sizes is conducted.

Numerical examples, verifying the analyses, are given
for a planar pendulum and an elastic 3--D~pendulum.
The results
are superior in comparison with a conventional explicit Runge-Kutta method
of the same order.

\keywords{Geometric numerical integration 
	\and splitting methods 
	\and weakly dissipative systems
}
\subclass{65P10
	\and 37M15
	\and 70F40
}

	%
\end{abstract}


\newpage
\tableofcontents


\section{Introduction} 
\label{sec:Introduction}

In this paper we analyze geometric numerical integration algorithms for
weakly dissipative perturbations of Hamiltonian systems. 
The dissipation rate is governed by a small parameter 
$0\leq\varepsilon\ll 1$, as in the simple example
\begin{equation}\label{eq:simple_example_equation}
	\begin{split}
		\dot \vq & = M^{-1} \vp \\
		\dot \vp & = -V'(\vq) - \eps D \dot\vq
	\end{split}	
\end{equation}
where $\vq,\vp\in\realset^n$, $M$~is a symmetric positive definite matrix, and
$D$ is a symmetric positive semi-definite matrix.
At $\varepsilon =0$ the system is Hamiltonian with
$H(\vq,\vp)=\frac{1}{2}\vp^\trans M^{-1} \vp+V(\vq)$,
and for $\varepsilon>0$ it is dissipative.
This type of dissipation is the simplest example of \emph{Rayleigh damping}
(the general case is presented in Section~\ref{sec:problem_description} below).

There are many applications. 
For example, in rolling bearing design, 
the objective is to minimize friction losses. 
The dynamics of such mechanical systems is often well described by this model. 
In numerical simulations, it is then of importance that energy losses are correct. 
This means that a geometric integrator (nearly) conserving energy at $\varepsilon = 0$, 
and giving the (nearly) correct dissipation rate as a function of $\varepsilon>0$, is desirable. 
Moreover, if possible, explicit methods are preferred. However, well-known reversible explicit methods,
such as the two-step explicit midpoint rule, 
typically lose their stability in the presence of dissipation. 
Furthermore, explicit symplectic methods for separable Hamiltonian systems
(e.g.~Lobatto~IIIA--IIIB)
become implicit if dissipation is added.
This motivates a special study and analysis of explicit geometric integrators
for systems with weak Rayleigh damping.

The paper will address this question by examining splitting methods that take advantage 
of the linearity of the dissipative perturbation.
Three integration schemes are investigated.
All of these schemes are explicit in the sense that
no numerical root solving is required.
The first two of the methods are
exponential integrators (meaning they
require the solution of a linear ODE).
The third method is either fully explicit,
or linearly implicit (meaning it requires the solution of
a linear equation). In our numerical examples,
the three methods under study turn out to yield
practically identical results. Although the presented methods
are all second order accurate, it is an easy
extension to construct higher order methods 
by using higher order compositions.
By backward error analysis we show that the
methods have asymptotically correct
dissipation behavior as the dissipation parameter~$\eps\to 0^+$.
Moreover, we analyze the decay of the modified
Hamiltonian function, and give theoretical results on monotonicity
of its evolution for small enough step sizes.
We also analyze conservation of momentum, and near conservation
of relative equilibria, in the case when both the conservative
and the dissipative parts are invariant under a symmetry.

The examined methods are invariant under
choice of coordinates in the configuration space.
Therefore, they are well defined
on any smooth configuration manifold without reference
to a specific choice of coordinates.
Throughout this paper we utilize the differential
geometric framework of Riemannian geometry to describe
the systems of interest.
Although an important consequence, the main reason for doing so 
is~\emph{not} to include
systems on non-linear spaces (constrained systems). Rather,
the presented framework and corresponding analysis
become more transparent in a differential geometric setting. 
Also, the fact that the result
are derived for abstract manifolds is essential for some
of the analysis carried out in the paper. 
(For example, independence of configuration coordinates
is a key ingredient in our proof of preservation of
symmetry invariance.) Moreover, well known results in
differential geometry, in particular on structural stability, symmetry groups
and momentum maps, and relative equilibria, 
become more easily available in this setting.
For the convenience of the reader, we sometimes carry
out invariant calculations even in cases when
they are evident from a differential geometric point of view.

%
%
%
%

The idea of applying geometric integration schemes to
weakly dissipative problems is certainly not new.
For instance, it is well known that
symplectic methods 
applied to perturbed integrable systems
(e.g.\ the Van der Pols equation) are superior to
standard methods, in the sense that weakly attractive invariant
tori are much better preserved; see~\cite[Chap.~XII]{HaLuWa2006}.
(Although the concepts of relative equilibria and invariant tori are closely related,
the latter can be seen as a special case of the former,
the settings and analyses in this paper and in~\cite[Chap.~XII]{HaLuWa2006} are not
overlapping. A comparison between the two settings is given in
Section~\ref{sub:relation_to_symplectic_integration_of_problems_with_attracting_invariant_tori}
below.)
%
Furthermore, in the paper~\cite{KaMaOrWe2000} it is observed numerically
that variational methods, in particular
a semi-explicit version of the Newmark method,
applied to weakly dissipative
systems give much more accurate energy dissipation rates than standard methods.
In the simplified case of \emph{conformal Hamiltonian systems}, 
corresponding to $D=\Id$ in equation~\eqref{eq:simple_example_equation}
above, it is possible to construct integrators that exactly preserve the 
corresponding geometric structure;
see~\cite{McPe2001,McQu2006}.
Concerning preservation of relative equilibria and numerical integration,
there has been extensive work within the framework
of energy--momentum methods; see~\cite{GoSi1996,Go1996,ArRo2001a,ArRo2001b}.
Under suitable conditions, 
energy--momentum methods exactly preserve relative equilibria.
However, energy momentum schemes are fully implicit.
Also related to our work is the recent paper~\cite{LuWaBr2010}, 
which uses a splitting technique,
similar to the one used in this paper, for the
integration of post-Newtonian equations,
possibly with weak dissipation due to
relativistic effects.

The paper is organized as follows. A problem description is given
in Section~\ref{sec:problem_description}. Numerical integration
schemes are presented in Section~\ref{sec:numerical_integration_schemes}.
A backward error analysis of the methods is 
given in Section~\ref{sec:backward_error_analysis}.
The result of the analysis is verified
by a numerical example (planar pendulum) in
Section~\ref{sub:example_damped_pendulum}.
In Section~\ref{sec:conservation_of_momentum} we show that
the methods preserve symmetry invariance, and conserve corresponding
momentum maps. In conjunction,
Section~\ref{sec:near_conservation_of_relative_equilibria}
contains a study of
conditions for near conservation of relative equilibria,
verified by a numerical example (elastic pendulum) in Section~\ref{sub:elastic_pendulum}.
Finally, conclusions are given in Section~\ref{sec:conclusions}.

Throughout the paper
we assume that the reader is familiar with basic
differential geometry. In particular, the following
geometric concepts are used without reference: Riemannian manifolds,
tangent and co-tangent bundles,
tensor fields, Lie derivative, symplectic maps, Hamiltonian vector fields, 
flow of a vector field, exponential map.
For the convenience of
the reader, other more advanced, but still fundamental, geometric concepts
are introduced and defined as they are used.
The introductory textbook on geometric mechanics 
by Marsden and Ratiu~\cite{MaRa1999}
is more than enough to cover all the differential geometric concepts in this paper.

We adopt the following notation. 
$\Q$ denotes a manifold, $\TQ$
and $\coTQ$ its corresponding tangent and co-tangent bundle.
Throughout the paper $q$ denotes an element in $\Q$.
Furthermore, $p$ denotes an element in
the co-tangent space $\T^*_q\Q$.
Thus, $p$ is always associated with~$q$.
The pairing between an element $p\in\T^*_q\Q$ and $\dot q\in\T_q\Q$
is denoted~$\pair{p,\dot q}$.
We use $z$ for a point in $\coTQ$ corresponding to~$(q,p)$.
The derivative of a function $\phi$ on $\Q$
at the point $q$ (also called the tangent map)
is denoted~$\T_q\phi$.
The set of smooth vector field on a manifold~$\P$ is denoted~$\Xcal(\P)$,
and the set of diffeomorphisms of~$\P$ is denoted~$\Diff(\P)$.
The flow of a vector field~$X\in\Xcal(\P)$ is~$\exp(t X)$, where~$t$ 
is the time parameter and~$\exp:\Xcal(\P)\to\Diff(\P)$ is the exponential map. 
The Lie derivate along~$X$ is denoted~$\LieD{X}$.
The space of smooth real-valued functions
on~$\P$ is denoted~$\Fcal(\P)$.
If $H\in\Fcal(\coTQ)$, then its corresponding
canonically Hamiltonian vector field is denoted~$X_H$.
The canonical Poisson bracket between~$H,G\in\Fcal(\coTQ)$
is denoted~$\pois{H,G}$.
The space of type~$(r,s)$ tensor fields over~$\Q$ 
is denoted~$\mathcal{T}^r_s(\Q)$.

%
%




\section{Problem Description} 
\label{sec:problem_description}

Let $(\Q,\mathsf{g})$ be a Riemannian manifold of dimension~$n$,
where $\mathsf{g}\in\mathcal{T}_2^0(\Q)$ is a metric tensor, i.e., symmetric and positive definite.
In our context~$\Q$
is the configuration space of a conservative
mechanical system with Lagrangian function of standard 
form~$L(\dot q,q) = \frac{1}{2}\mathsf{g}_q(\dot q,\dot q) - V(q)$.
Thus, the metric tensor determines the kinetic energy
of the system.

We are interested in dissipative
perturbations of such systems. To this end, let
$\mathsf{d}\in\mathcal{T}_2^0(\Q)$ be a symmetric positive semi-definite
tensor field called \emph{dissipation tensor}.
We shall assume that
the rank of~$\mathsf{d}$ is constant, i.e.,
that it is a regular tensor.
Further, the \emph{Rayleigh dissipation function} is defined by
$R(q,\dot q) = \frac{1}{2}\mathsf{d}_{q}(\dot q,\dot q)$.
We consider the following dissipative perturbation
of the Euler--Lagrange equation
\begin{equation}\label{eq:dissipative_Euler_Lagrange}
	\frac{\ud}{\ud t}\frac{\pd L}{\pd \dot q} - \frac{\pd L}{\pd q} =
	-\eps \frac{\pd R}{\pd \dot q} ,
\end{equation}
where 
$\eps\geq 0$ is the perturbation parameter.



From a structural, as well as a numerical, point of view it is
often preferable to transform the Euler--Lagrange equation into
a first order Hamiltonian system on 
the co-tangent bundle phase space~$\coTQ$. 
The metric tensor induces an isomorphism $\Mcal(q):\T_q\Q \to \T^*_q\Q$ 
by the \emph{flat operator} 
$\dot q\mapsto \dot q^\flat := \mathsf{g}_q(\dot q,\cdot)$.
Its inverse defines the \emph{sharp operator} 
$p\mapsto {p}^\sharp:=\Mcal(q)^{-1}p$. 
$\Mcal$ is sometimes called the \emph{inertia operator}.
Since the kinetic energy descends from the metric tensor,
the Legendre transformation is given by the flat operator,
so the momentum variable is $p :=\pd L/\pd\dot q=\dot q^\flat$.
%
%
The Hamiltonian function, describing the total energy of the system, is given by
the sum of kinetic and potential energies
\begin{equation}\label{eq:Hamiltonian_function}
	H(q,p) = \underbrace{\frac{1}{2}\mathsf{g}_q(p^\sharp,p^\sharp)}_{T(q,p)}+V(q).
\end{equation}

Substituting variables,
the perturbed Euler--Lagrange equation~\eqref{eq:dissipative_Euler_Lagrange} transforms
into a perturbed Hamiltonian system
\begin{equation}\label{eq:perturbed_Hamiltonian_system}
	\begin{split}
		\dot q &= p^\sharp \\
		\dot p &= -\frac{\pd H}{\pd q} - \eps\,\mathsf{d}_q(p^\sharp,\cdot) \, ,
	\end{split}
\end{equation}
where we have used that $\pd H/\pd p = p^\sharp$, which follows since
$\mathsf{g}_q(p^\sharp,p^\sharp) = \pair{p,p^\sharp}$.



%
%

Since the Hamiltonian part of equation~\eqref{eq:perturbed_Hamiltonian_system}
is perturbed by a non-Hamiltonian perturbation, total energy is not conserved.
Indeed, the time evolution of the total energy along solution curves is given by
\begin{equation}\label{eq:exact_energy_dissipation_rate}
	\frac{\ud}{\ud t}H(q,p) = \frac{\pd H}{\pd q}\dot q + \frac{\pd H}{\pd p}\dot p
	= -\eps\, \mathsf{d}_q(p^\sharp,p^\sharp) = -\eps\, \mathsf{d}_q(\dot q,\dot q) \leq 0 .
\end{equation}
Thus, the rôle of the dissipation tensor is to specify the rate of
energy decay at each point in phase space.

The form of system~\eqref{eq:perturbed_Hamiltonian_system} is
invariant under point transformations, as we will now derive.
Indeed, let~$\Q$ and~$\Smanifold$ be diffeomorphic manifolds and $\phi:\Smanifold\to\Q$
a diffeomorphism. Consider now the change of variables $(s,r)  = \T^*\phi (q,p)$,
where $\T^*\phi:\coTQ\to\T^*\Smanifold$ is the \emph{co-tangent lift} of~$\phi$
(cf.~Marsden and Ratiu~\cite[Sect.~6.3]{MaRa1999}).
(In coordinates, $(\vect s,\vect r)\leftrightarrow (\vect q,\vect p)$ is defined by the
generating function of second kind $G_2(\vect{q},\vect{r})=\phi(\vect{q})\cdot\vect{r}$.)
We know already from standard Hamiltonian theory that the governing equation
for the Hamiltonian part of the vector field in the variables $(s,r)$
transforms into a new canonical Hamiltonian system for a new
Hamiltonian function $K$ on $\T^*\Smanifold$ given by
\[
	\begin{split}
		K(s,r) &= H\big(\T^*\phi^{-1}(s,r)\big) 
		\\
		& = 
		\pair{(\T_{s}\phi)^{-1*}r,\Mcal(\phi(s))^{-1}(\T_{s}\phi)^{-1*}r}
		+ V(\phi(s)) \\
		&= \pair{r,\underbrace{(\T_{s}\phi)^{-1}\Mcal(\phi(s))^{-1}(\T_{s}\phi)^{-1*}r}_{r^\sharp}}
		+ V(\phi(s)) \\
		&= \pair{r,r^\sharp} + (\phi^* V)(s) \\
		& = (\phi^*\mathsf{g})_s(r^\sharp,r^\sharp) + (\phi^* V)(s)
	\end{split}
\]
where $(\phi^*\mathsf{g})_{s}(u,v) = 
\mathsf{g}_{\phi(s)}(\T_s\phi \, u,\T_s\phi \, v)$ and $(\phi^* V)(s) = V(\phi(s))$
is the \emph{pull-back} of $\mathsf{g}$ and $V$.
Thus, in the new variables $(s,r)$ the Hamiltonian part of
system~\eqref{eq:perturbed_Hamiltonian_system} takes again the same form 
but with the pull-backed quantities $\phi^*\mathsf{g}$ and $\phi^* V$.
Next, the vector field over~$\coTQ$ corresponding to the dissipation
is given by $\eps Y(q,p) = (0,-\eps\mathsf{d}_q(p^\sharp,\cdot))=:(0,-\eps\Dcal(q)p^\sharp)$,
where $\Dcal(q):\T_q\Q\to\T^*_q\Q$. 
The corresponding vector field in the $(s,r)$ variables is
the \emph{push-forward} of~$Y$ by~$\T^*\phi$, which is given by 
$(\T^*\phi)_* Y := \T(\T^*\phi)\circ X \circ \T^*\phi^{-1}$.
In matrix notation
\begin{equation} \label{eq:pushforward_calculation}
	\begin{split}
		(\T^*\phi)_* Y(s,r) &= 
		\begin{bmatrix}
			\T_s \phi^{-1} & 0 \\
			\frac{\pd}{\pd s}(\T_s\phi)^*r & (\T_s\phi)^*
		\end{bmatrix}
		\begin{bmatrix}
			0 \\
			\Dcal(\phi(s))(\T_{s}\phi)^{-1*}r
		\end{bmatrix}
		\\ & = 
		\begin{bmatrix}
			0 \\
			(\T_s\phi)^*\Dcal(\phi(s))\Mcal(\phi(s))^{-1}(\T_{s}\phi)^{-1*}r
		\end{bmatrix}
		\\ & = 
		\begin{bmatrix}
			0 \\
			\underbrace{(\T_s\phi)^*\Dcal(\phi(s)) \T_s\phi}_{(\phi^*\mathsf{d})_s(\cdot,\cdot)}
				\underbrace{(\T_s\phi)^{-1}\Mcal(\phi(s))^{-1}(\T_{s}\phi)^{-1*}r}_{r^\sharp}
		\end{bmatrix}
		\\ & =
		\big(0,(\phi^*\mathsf{d})_s(r^\sharp,\cdot)\big)
	\end{split}
\end{equation}
In summary, this means that
system~\eqref{eq:perturbed_Hamiltonian_system} is determined by
the three geometric objects (tensors) $\mathsf{g}$,~$\mathsf{d}$~and~$V$ 
in such a way that
%
a change of
variables $(s,r)  = \T^*\phi (q,p)$ takes system~\eqref{eq:perturbed_Hamiltonian_system}
into a new system of the same form defined by the pulled back 
triple $\phi^*\mathsf{g}$,~$\phi^*\mathsf{d}$~and~$\phi^*V$. 
%
In other words, the following diagram commutes:
\begin{equation}\label{eq:commutative_coordinate_independence}
	\xymatrix@=10ex{\mathsf{g},\mathsf{d},V 
	\ar[r]^{\text{\normalsize $\phi^*$}}\ar[d]_{\text{\rm\normalsize flow}} &
	\ar[d]^{\text{\rm\normalsize flow}} \mathsf{h},\mathsf{e},W \\
	q(t),p(t) \ar[r]^{\text{\normalsize $\T^*\phi$}} & s(t),r(t)}
\end{equation}

Thus, the form of equation~\eqref{eq:perturbed_Hamiltonian_system} is independent
of the choice of coordinates on $\Q$. The form is not, however, invariant
with respect to any symplectic coordinate transformation on the full phase space $\coTQ$:
only 
point transformations are allowed.
As we will see, independence of coordinates on $\Q$
is preserved by the numerical schemes suggested in Section~\ref{sec:numerical_integration_schemes}.

%
%
%
%


\section{Numerical Integration Schemes} 
\label{sec:numerical_integration_schemes}

In this section we present the explicit and semi-explicit time-stepping schemes
for the system~\eqref{eq:perturbed_Hamiltonian_system} above to be analyzed in the paper.
%
To begin with, we rewrite equation~\eqref{eq:perturbed_Hamiltonian_system}
in the more compact form
\begin{equation}\label{eq:perturbed_Hamiltonian_system_condense}
	\dot z = X_T(z) + X_V(z) + \eps Y(z), \qquad z = (q,p)\in\coTQ ,
\end{equation}
where $X_T,X_V$ are the Hamiltonian vector fields corresponding
to $T$ and $V$ respectively, and $\eps Y$ is the non-Hamiltonian
Rayleigh dissipation part.

Since the potential energy function
$V$ is independent of~$p$, the flow $\exp(tX_V)$ of $X_V$ is explicitly
integrable for any choice of coordinates on~$\Q$ simply by applying
the forward Euler method. Throughout the rest of this paper we make
one assumption which is crucial for the implementation
of the suggested methods: that we know coordinates
on~$\Q$ in which the flow $\exp(t X_T)$ is explicitly computable.
For particle systems, this is typically accomplished by choosing
Cartesian coordinates in which the inertia operator~$\Mcal$
is independent of~$q$. In this case, $\exp(t X_T)$ is again
computable using the forward Euler method.

We study the following schemes:

\begin{algorithm}[Three-term splitting method] \label{alg:3S}
	Let $h>0$ be the step size and define $z_k \mapsto z_{k+1}$
	by the map
	\[
		\PhiThreeS^h = \exp(\frac{h}{2}\eps Y)\circ\exp(\frac{h}{2} X_T)\circ
		\exp(h X_V)\circ\exp(\frac{h}{2}X_T)\circ\exp(\frac{h}{2}\eps Y) .
	\]
\end{algorithm}

\begin{algorithm}[Two-term splitting method] \label{alg:2S}
	Let $h>0$ be the step size and define $z_k \mapsto z_{k+1}$
	by the map
	\[
		\PhiTwoS^h = \exp(\frac{h}{2} X_T)\circ
		\exp\big(h (X_V+\eps Y)\big)\circ\exp(\frac{h}{2}X_T) .
	\]
\end{algorithm}

\begin{algorithm}[Runge-Kutta splitting method] \label{alg:RKS}
	Let $h>0$ be the step size and let $\Psi_{X}^h$
	be a Runge-Kutta method for $X\in\Xcal(\coTQ)$.
	Define $z_k \mapsto z_{k+1}$ by the map
	\[
		\PhiRKS^h = \exp(\frac{h}{2} X_T)\circ
		\Psi_{X_V+\eps Y}^h\circ\exp(\frac{h}{2}X_T) .
	\]
\end{algorithm}

All of these methods reduce to the classical Störmer-Verlet method~$\PhiSV^h$
(also known as the leap-frog method or the Verlet scheme)
in the case~$\eps=0$.
(To be more precise, they reduce to the dual version, 
or $B$--version, of the classical Störmer-Verlet method; 
see~\cite{HaLuWa2003} for various interpretations of this method.)
Furthermore, Algorithm~\ref{alg:3S} and Algorithm~\ref{alg:2S}
are second order accurate, as is Algorithm~\ref{alg:RKS} if the Runge-Kutta method used
is at least of order 2. It is straightforward to extend these methods
to higher order, by using higher order compositions
(see~\cite{McQu2002} for a review of splitting methods).

It should be noted that the vector field $X_V+\eps Y$ 
is linear in~$p$ and thus explicitly integrable
using the exponential Euler method (cf.~\cite{HoOs2010}).
Hence, $\PhiTwoS^h$ is explicitly computable.
Likewise,~$Y$ is explicitly integrable using the exponential
Euler method, so $\PhiThreeS^h$
is also explicitly computable. If the Runge-Kutta method in Algorithm~\ref{alg:RKS}
is explicit, then $\PhiRKS^h$ is fully
explicit. If the Runge-Kutta method used is implicit,
then each step of $\PhiRKS^h$ requires the solution
of a linear system.

Runge-Kutta methods are well defined on linear spaces. Thus,
one might think that Algorithm~\ref{alg:RKS} is defined only
in the case when $\coTQ\simeq \realset^{2n}$. This is however
not the case,
since the dynamics of $X_V+\eps Y$ is trivial in the configuration variable~$q$,
so each ``Runge-Kutta step''~$\Psi^h_{X_V+\eps Y}$ takes place
entirely on a co-tangent space $\T^*_q\Q$, which is a linear space.
Furthermore, since Runge-Kutta methods are invariant with respect
to linear coordinate changes, and since any smooth change of configuration
coordinates corresponds to a linear change of coordinates on each co-tangent space,
Algorithm~\ref{alg:RKS} is defined intrinsically (i.e., without reference to a specific choice
of configuration coordinates).

\begin{remark}
	Notice that system~\eqref{eq:perturbed_Hamiltonian_system_condense}
	is of the form
	\[
		\begin{split}
			\dot q &= f(q,p) \\
			\dot p &= g(q,p) 
		\end{split}
	\]
	so a partitioned Runge-Kutta method may be used
	to integrate it. The classical Störmer-Verlet method
	can be extended to the partitioned Runge-Kutta method
	given by the second order Lobatto~IIIA--IIIB pair;
	see~\cite[Sect.~II.2]{HaLuWa2006}.
	From our point of view, assuming that we are using a coordinate
	system in which the kinetic energy is independent of~$q$, this extension
	of the classical Störmer-Verlet method is conjugate to
	Algorithm~\ref{alg:RKS} with the implicit midpoint rule
	as choice of Runge-Kutta method.
	Indeed, if $\Phi^h_\mathrm{M}$, $\Phi^h_\mathrm{I}$ and $\Phi^h_\mathrm{E}$
	denotes, respectively, the implicit midpoint rule, the implicit Euler method and
	the explicit Euler method, it holds that
	$\Phi^h_\mathrm{M}=\Phi^{h/2}_\mathrm{E}\circ\Phi^{h/2}_\mathrm{I}$.
	Thus, Algorithm~\ref{alg:RKS} with the implicit midpoint rule as the choice of
	Runge-Kutta method is 
	$\exp(h X_T/2)\circ\Phi^{h/2}_\mathrm{E}\circ\Phi^{h/2}_\mathrm{I}\circ\exp(h X_T/2)$.
	This method is conjugate to $\Phi^{h/2}_\mathrm{I}\circ\exp(h X_T)\circ\Phi^{h/2}_\mathrm{E}$,
	which is exactly the $A$--version of the partitioned Runge-Kutta
	extension of the classical Störmer-Verlet method.
	%
	%
	%
	%
	%
\end{remark}

As mentioned above, the suggested integrators are invariant with respect to
choice of coordinates on the configuration space. From a differential geometric
point of view, this result is obvious, since all the objects and
operations used in their definition are intrinsic (configuration coordinate independent).
However, for the convenience of the reader, we carry out
a proof with complete calculations.

\begin{proposition}\label{pro:coordinate_invariance}
	Algorithms~\ref{alg:3S}--\ref{alg:RKS} are invariant with respect to
	choice of coordinates in $\Q$. That is, if $\Smanifold\simeq\Q$ 
	and $\phi:\Smanifold\to\Q$ is a diffeomorphism,
	then the following diagram commute:
	\[
	\xymatrix@=10ex{\mathsf{g},\mathsf{d},V 
	\ar[r]^{\text{\normalsize $\phi^*$}}\ar[d]_{\text{\rm\normalsize method}} &
	\ar[d]^{\text{\rm\normalsize method}} \mathsf{h},\mathsf{e},W \\
	\{(q_k,p_k)\}_k \ar[r]^{\text{\normalsize $\T^*\phi$}} & \{(s_k,r_k)\}_k}
	\]
\end{proposition}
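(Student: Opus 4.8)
The plan is to reduce the commutativity of the diagram for an entire method to an intertwining relation for each elementary factor in its composition, and then telescope. Write $\Lambda := \T^*\phi\colon\coTQ\to\T^*\Smanifold$ for the cotangent lift. For each building block $F$ of a method --- either a flow $\exp(tX)$ or the Runge--Kutta substep $\Psi^h_{X_V+\eps Y}$ --- I aim to prove $\Lambda\circ F = \tilde F\circ\Lambda$, where $\tilde F$ is the corresponding block of the method determined by the pulled-back triple $\phi^*\mathsf{g},\phi^*\mathsf{d},\phi^*V$ on $\T^*\Smanifold$. Granting these, the full statement follows by inserting $\Lambda^{-1}\Lambda=\Id$ between consecutive factors; for instance $\Lambda\circ\PhiThreeS^h = \bigl(\Lambda\exp(\tfrac h2\eps Y)\Lambda^{-1}\bigr)\cdots\bigl(\Lambda\exp(\tfrac h2\eps Y)\Lambda^{-1}\bigr)\circ\Lambda$, and each conjugated factor collapses to its counterpart $\tilde F$, leaving the method for the pulled-back triple composed with $\Lambda$. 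The same telescoping disposes of $\PhiTwoS^h$ and $\PhiRKS^h$.

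For the flow factors the intertwining is just naturality of the flow under a diffeomorphism: $\Lambda\circ\exp(tX)\circ\Lambda^{-1}=\exp\!\bigl(t\,\Lambda_*X\bigr)$, so it remains only to identify the pushed-forward fields. For the dissipation field this is exactly the push-forward computation~\eqref{eq:pushforward_calculation}, which shows that $\Lambda_*Y=:\tilde Y$ is the dissipation field attached to $\phi^*\mathsf{d}$. For the Hamiltonian fields I use that the cotangent lift is a symplectomorphism, hence carries $X_H$ to $X_{H\circ\Lambda^{-1}}$; together with the displayed computation of the transformed Hamiltonian $K=H\circ\Lambda^{-1}$ in Section~\ref{sec:problem_description} and the additivity $X_{T+V}=X_T+X_V$, this identifies $\Lambda_*X_T$ and $\Lambda_*X_V$ with the kinetic and potential Hamiltonian fields $\tilde X_T,\tilde X_V$ of the pulled-back triple. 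Linearity of the push-forward then gives $\Lambda_*(X_V+\eps Y)=\tilde X_V+\eps\tilde Y$, which handles the central flow of $\PhiTwoS^h$. This already establishes the proposition for Algorithms~\ref{alg:3S} and~\ref{alg:2S}.

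The remaining, and genuinely delicate, factor is the Runge--Kutta substep $\Psi^h_{X_V+\eps Y}$ in Algorithm~\ref{alg:RKS}, because a Runge--Kutta map is not intrinsic but presupposes a linear structure. Here I would invoke the two observations already recorded above: since $X_V$ and $Y$ both have vanishing $q$-component, the substep keeps the base point $q$ fixed and evolves entirely inside the single cotangent fibre $\T^*_q\Q$, which is a linear space; and on this fibre $\Lambda$ restricts to the linear isomorphism $(\T_s\phi)^*\colon\T^*_{\phi(s)}\Q\to\T^*_s\Smanifold$. The field $X_V+\eps Y$ is affine in $p$, and a Runge--Kutta method is equivariant under invertible linear (indeed affine) changes of variables; hence $(\T_s\phi)^*$ conjugates the Runge--Kutta step for $X_V+\eps Y$ into the Runge--Kutta step for the transformed field, which by~\eqref{eq:pushforward_calculation} is precisely $\tilde X_V+\eps\tilde Y$. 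This yields $\Lambda\circ\Psi^h_{X_V+\eps Y}=\Psi^h_{\tilde X_V+\eps\tilde Y}\circ\Lambda$ and closes the RKS case.

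I expect this last step to be the only real obstacle. The flow factors are immediate from naturality of the exponential map combined with the transformation identities of Section~\ref{sec:problem_description}, whereas for the Runge--Kutta substep one must argue carefully that it lives on a fixed fibre (so that ``linear change of variables'' is meaningful), that the fibrewise restriction of the cotangent lift is the linear map $(\T_s\phi)^*$, and that this map sends the restricted field to its pulled-back counterpart; only then does the linear equivariance of Runge--Kutta methods apply.
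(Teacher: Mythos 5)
Your proposal is correct and follows essentially the same route as the paper's own proof: factor each method into its elementary maps, conjugate each factor by the cotangent lift using the transformation identities of Section~\ref{sec:problem_description} (the Hamiltonian pieces via the pulled-back triple, the dissipative piece via the push-forward computation~\eqref{eq:pushforward_calculation}), and treat the Runge--Kutta substep separately by observing it lives on a single cotangent fibre where the lift restricts to a linear map, so linear equivariance of Runge--Kutta methods applies. The only cosmetic difference is that the paper invokes the already-established commutative diagram~\eqref{eq:commutative_coordinate_independence} wholesale for each sub-triple, whereas you re-derive the flow conjugations from naturality plus the symplectomorphism property of cotangent lifts; the content is identical.
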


\begin{proof}
	For simplicity we denote the map $\T^*\phi:\coTQ\to\T^*\Smanifold$ by~$\chi$.
	Further, we denote Algorithms~\ref{alg:3S}--\ref{alg:RKS} on $\coTQ$ for the triple
	$(\mathsf{g},\mathsf{d},V)$ by 
	$\Phi^h_\mathrm{3S}$, 
	$\Phi^h_\mathrm{2S}$ and
	$\Phi^h_\mathrm{RKS}$. 
	The corresponding algorithms on $\T^*\Smanifold$
	for the pulled back triple 
	$(\mathsf{h},\mathsf{e},W):=(\phi^*\mathsf{g},\phi^*\mathsf{d},\phi^*V)$
	are denoted 
	$\Upsilon^h_\mathrm{3S}$, 
	$\Upsilon^h_\mathrm{2S}$ and
	$\Upsilon^h_\mathrm{RKS}$. 
	We need to show that 
	$\Phi^h_\mathrm{3S}=\chi\circ\Upsilon^h_\mathrm{3S}\circ\chi^{-1}$, 
	$\Phi^h_\mathrm{2S}=\chi\circ\Upsilon^h_\mathrm{2S}\circ\chi^{-1}$ and
	$\Phi^h_\mathrm{RKS}=\chi\circ\Upsilon^h_\mathrm{RKS}\circ\chi^{-1}$.
	
	First, notice that the vector fields $X_T$, $\eps Y$ and $X_V$ on $\coTQ$ 
	are defined respectively by the triples 
	$(\mathsf{g},0,0)$, $(0,\mathsf{d},0)$ and $(0,0,V)$.
	Thus, from the commutative diagram~\eqref{eq:commutative_coordinate_independence}
	it follows that $\exp(t X_U) = \chi\circ\exp(t X_T)\circ\chi^{-1}$,
	$\exp(t Y) = \chi\circ\exp(t Z)\circ\chi^{-1}$ and 
	$\exp(t X_V) = \chi\circ\exp(t X_W)\circ\chi^{-1}$,
	where $X_U$, $\eps Z$ and $X_W$ are the vector fields on
	$\T^*\Smanifold$ defined respectively by the
	triples $(\mathsf{h},0,0)$, $(0,\mathsf{e},0)$ and $(0,0,W)$.
	From the definition of Algorithm~\ref{alg:3S} it now follows
	that~$\Phi^h_\mathrm{3S}=\chi\circ\Upsilon^h_\mathrm{3S}\circ\chi^{-1}$.
	
	Likewise, the vector field $X_V+\eps Y$ is defined by the triple $(0,\mathsf{d},V)$.
	Thus, it follows again from diagram~\eqref{eq:commutative_coordinate_independence} 
	that $\exp(t(X_V+\eps Y))=\chi\circ\exp(t(X_W+\eps Z))\circ\chi^{-1}$, and
	from the definition of Algorithm~\ref{alg:2S} we get
	$\Phi^h_\mathrm{2S}=\chi\circ\Upsilon^h_\mathrm{2S}\circ\chi^{-1}$.
	
	Lastly, notice that $X_V+\eps Y$ has trivial
	dynamics in~$q$. Thus, it reduces to a system
	on $\T^*_q Q$
	(only the $p$~variable is affected by its flow).
	From diagram~\eqref{eq:commutative_coordinate_independence}
	it follows that $X_W+\eps Z$ corresponds to a
	change of variables $q=\phi(s)$ and $r = \T^*_q\phi\cdot p$.
	This is a linear map in the
	momentum variable. 
	It follows that
	$\Psi^h_{X_V+\eps Y} = \chi\circ\Psi^h_{X_W+\eps Z}\circ\chi^{-1}$
	since $\Psi^h$ is a linear method, i.e., invariant under
	a linear change of variables (every Runge-Kutta method is).
	It now follows that 
	$\Phi^h_\mathrm{RKS}=\chi\circ\Upsilon^h_\mathrm{RKS}\circ\chi^{-1}$
	from the definition of Algorithm~\ref{alg:RKS}.
\end{proof}

\begin{remark} \label{rem:configuration_coordinate_independence}
	Although invariance with respect to choice of configuration 
	coordinates for numerical integration of system~\eqref{eq:perturbed_Hamiltonian_system} 
	seems to be a highly natural requirement, most families of integrators
	do not have this property. For example, symplectic Runge-Kutta methods, which are
	invariant under a general linear change of coordinates and preserve 
	the canonical symplectic form, 
	are not invariant under co-tangent lifted point transformations.
	In engineering application, this has an impact, since the result of 
	a numerical simulation then depends on whether Cartesian, cylindrical
	or spherical coordinates are used.
\end{remark}


\section{Backward Error Analysis} 
\label{sec:backward_error_analysis}

In this section we present a backward error analysis for the
methods presented in Section~\ref{sec:numerical_integration_schemes}.
We start off with a brief review of the general notion of backward error analysis
for ordinary differential equations. 
Thereafter, we show how the framework
can be used for the class of Rayleigh damped problems
considered in this paper.

\subsection{Review of Framework} 
\label{sub:review_of_framework_rev_2}

Let $\P$ be a phase space manifold and $X\in\Xcal(\P)$ a smooth vector field.
The set of diffeomorphisms on~$\P$ is denoted $\Diff(\P)$
(this set forms an infinite dimensional group under compositions).
Following previous authors~\cite{Re1999,Mo2009thesis,Ha2011},
we now define rigorously what is meant by an integrator.

\begin{definition}\label{def:integrator}
	An \emph{integrator} for a vector field $X\in\Xcal(\P)$ is a one-parameter family
	$\Phi^h:\P\to\P$ of diffeomorphisms that is smooth in $h$
	and satisfies: $\Phi^0 = \Id$ and
	$\Phi^h(z) - \exp(h X)(z) = \mathcal{O}(h^{r+1})$ for all $z\in\P$, 
	where $r\geq 1$ is the \emph{order}
	of the integrator.
	%
\end{definition}

Given an integrator $\Phi^h$ for $X\in\Xcal(\P)$,
the basic notion of backward error analysis is to
find a \emph{modified vector field} $X_h$, depending 
smoothly on~$h$, such that its flow coincides with $\Phi^h$, i.e.,
such that $\exp(h X_h) = \Phi^h$. Thus, the question of finding
a modified vector field is equivalent to the question of finding
an inverse of the exponential map $\exp:\Xcal(\P)\to\Diff(\P)$.
However, it is well known that $\exp$ is \emph{not} surjective,
not even in an arbitrary small neighborhood of the identity (see e.g.~\cite{Gr1988}).
Thus, consider instead the restriction
to the real analytic case.
Indeed, let $\P$ be a real analytic manifold (cf.~\cite{KrMi1997b}), 
let $\Xcal^a(\P)\subset\Xcal(\P)$ be the
space of real analytic vector fields on $\P$, and let $\Diff^a(\P)\subset\Diff(\P)$
be the corresponding real analytic diffeomorphisms.
Even in this case, the restricted exponential map $\exp:\Xcal^a\to\Diff^a(\P)$
is \emph{not} locally surjective~\cite[Chap.~IX]{KrMi1997b}.
However, one can still find a \emph{formal modified vector field}
by asymptotic expansion in the step size parameter:
\begin{equation}\label{eq:formal_bea_series}
	X_h = X_0 + h X_1 + h^{2} X_{2} + \ldots
\end{equation}
where $X_0,X_{1},X_{2},\ldots \in \Xcal^a(\P)$. Given an integrator $\Phi^h$
for~$X\in\Xcal^a(\P)$ the vector fields $X_0,X_1,X_2,\ldots$ 
are defined recursively by
\[
	X_{k+1}(z) = \lim_{h\to 0} \frac{\Phi^h(z)-\exp(h X_{h,k})(z)}{h^{k+1}}
\]
where $X_0 = X$ and $X_{h,k} = \sum_{i=0}^k h^i X_i$.
Thus, with this construction $\Phi^h(z) - \exp(h X_{h,k})(z) = \mathcal{O}(h^{k+1})$,
which follows from Taylor's theorem.
Notice that if the integrator is of order~$r$, then $X_k =0$ for $0 < k < r$.

In general the formal series~\eqref{eq:formal_bea_series} does not converge,
so instead one has to truncate the series.
The basic result in backward error analysis states that there is an optimal
truncation index $k$, depending on $h$,
such that $\Phi^h - \exp(h X_{h,k})$ is exponentially small. There are many contributors
to this result: we refer to~\cite[Chap.~IX]{HaLuWa2006} and references therein 
for an account of its origin. The original result is obtained for Euclidean
phase space $\P=\realset^n$. For the purpose of this paper, we use a
generalization to manifolds given in~\cite{Ha2011}:

\begin{theorem}\label{thm:bea_main_result}
	Let $\P$ be a real analytic manifold, $\set U\subset \P$ a compact subset,
	$X\in\Xcal^a(\P)$ a real analytic vector field, and $\Phi^h$
	an integrator for $X$ such that $h\mapsto \Phi^h(z)$ is real analytic
	for all $z\in\set U$.
	Then there exists a distance function $\mathrm{dist}:\P\times\P\to\realset^+$,
	a truncation index $k$ (depending on $h$),
	and constants $C,\gamma,h_0>0$ such that
	\[
		\mathrm{dist}\big(\Phi^h(z),\exp(h X_{h,k})(z)\big) \leq h C \e^{-\gamma/h}
	\]
	for all $z\in\set U$ whenever $h\leq h_0$.
\end{theorem}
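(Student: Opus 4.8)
The plan is to localize the problem to coordinate charts, invoke the classical Euclidean backward-error estimate in each chart, and then patch the local bounds together using compactness of $\set U$. Since $\set U$ is compact I can cover it by finitely many charts $(\set U_j,\psi_j)$ with relatively compact domains; in each chart both $X$ and the map $h\mapsto\Phi^h$ are represented by real analytic functions on an open subset of $\realset^n$. Fixing any Riemannian metric on $\P$ furnishes the global distance function $\mathrm{dist}$, and on each chart domain this distance is equivalent, up to fixed constants, to the Euclidean distance of the chart. Because the recursion defining the modified vector fields $X_k$ in the text is coordinate-free, the $X_k$ are intrinsic objects on $\P$; consequently the truncated flow $\exp(h X_{h,k})$ is a well-defined diffeomorphism of $\P$, independent of the chart, and its chart representations are the usual Euclidean modified flows.

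The analytic core of the argument is the factorial growth estimate for the $X_k$. Working in a single chart, the hypotheses of analyticity in $z$ and in $h$ let me extend $X$ and $\Phi^h$ to holomorphic maps on a complex neighborhood of the chart domain and a complex disc in $h$. The recursion $X_{k+1}(z)=\lim_{h\to 0}h^{-(k+1)}\big(\Phi^h(z)-\exp(h X_{h,k})(z)\big)$ expresses each coefficient through spatial derivatives of the lower-order fields, and each such derivative is estimated by a Cauchy inequality on a slightly shrunk domain. Running this on a nested family of complex neighborhoods whose radii decrease geometrically yields a bound of the form $\sup_z\norm{X_k(z)}\le C_0\,k!\,R^{-k}$ on a fixed interior subdomain, where $R$ measures the width of the strip of analyticity and $C_0$ the sup-norm of the holomorphic extension. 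The local truncation defect then obeys $\norm{\Phi^h(z)-\exp(h X_{h,k})(z)}\lesssim h^{k+1}\norm{X_k}\lesssim h\,(h/R)^{k}\,k!$.

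The final step is optimal truncation. Choosing the truncation index $k=k(h)$ to be the largest integer below $R/(\e h)$ and applying Stirling's formula to $(h/R)^k k!$ shows that this expression is near its minimum there and is bounded by a constant times $\e^{-\gamma/h}$, with $\gamma$ proportional to $R$. This gives the stated estimate $h\,C\,\e^{-\gamma/h}$ inside each chart whenever $h\le h_0$. To globalize, I set $R=\min_j R_j$, $C=\max_j C_j$, and take $\gamma$ and $h_0$ as the corresponding worst cases over the finitely many charts, transferring the Euclidean bounds to $\mathrm{dist}$ via the chart-distance equivalence; a point of $\set U$ lying in several charts causes no ambiguity because $X_{h,k}$, and hence the truncated flow, is intrinsic.

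I expect the principal obstacle to be the factorial growth estimate together with its domain bookkeeping: one must choose the shrinking nested neighborhoods so that the accumulated loss from repeated Cauchy differentiation produces exactly the $k!\,R^{-k}$ rate and no worse, and so that the chosen index $k(h)\to\infty$ still keeps $\exp(h X_{h,k})$ inside the domain on which the estimates were derived. The geometric ingredients --- the finite cover, the worst-case constants, and the equivalence of distances --- are routine once this uniform analytic estimate is in hand, following the treatment in \cite[Chap.~IX]{HaLuWa2006} and its manifold generalization in \cite{Ha2011}.
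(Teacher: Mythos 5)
The first thing to note is that the paper itself does not prove this theorem at all: it is quoted from Hairer's manifold generalization \cite{Ha2011} (the Euclidean case being classical, \cite[Chap.~IX]{HaLuWa2006}), and the only proof-related content in the paper is the remark that the distance function is obtained by Whitney-embedding $\P$ into $\realset^n$ and restricting the Euclidean distance. So your proposal competes with the cited literature rather than with an in-paper argument, and your globalization — finite atlas, Riemannian distance, worst-case constants over charts, intrinsicness of the $X_k$ to resolve chart overlaps — is a legitimate alternative to the embedding route; in outline it is the standard Benettin--Giorgilli/Hairer--Lubich strategy. Two of your supporting claims do need justification, though: that the $X_k$ are intrinsic is not immediate from the recursion as written, since it uses the difference $\Phi^h(z)-\exp(h X_{h,k})(z)$ of two \emph{points}, which is chart-dependent; only in the limit $h\to 0$, because the two maps agree to order $h^{k+1}$, does the leading coefficient transform as a tangent vector. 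That argument is short and standard, but it is exactly what licenses your patching step.

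The genuine gap is in the one concrete analytic prescription you commit to, which you yourself identify as the crux. If the nested complex domains have radii decreasing geometrically, say $r_j=\theta^j R$, then the $j$-th Cauchy estimate costs a factor $(r_{j-1}-r_j)^{-1}=\theta^{-(j-1)}\big(R(1-\theta)\big)^{-1}$, and the product over $k$ steps is $\theta^{-k(k-1)/2}\big(R(1-\theta)\big)^{-k}$, i.e., it grows like $\e^{ck^2}$, not like $k!\,R^{-k}$. With such a bound the defect $h^k\e^{ck^2}$ is minimized at $k\sim \abs{\log h}/(2c)$ and yields only $\e^{-\abs{\log h}^2/(4c)}$ — smaller than any power of $h$, but \emph{not} $\mathcal{O}(\e^{-\gamma/h})$ — so the exponential estimate claimed in the theorem is lost. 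The correct bookkeeping (as in \cite[Sect.~IX.7]{HaLuWa2006}) shrinks the radii in \emph{arithmetic} steps of size about $R/k$, where $k$ is the target truncation index: the total loss then stays a fixed fraction of $R$ while the product of Cauchy factors is $(ck/R)^k\approx k!\,(c\e/R)^k$, which is what optimal truncation at $k\sim R/h$ (up to a constant) turns into $\e^{-\gamma/h}$. As written, your nested-domain choice would make the key step fail.
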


\begin{remark}
	The distance function used in Theorem~\ref{thm:bea_main_result} is obtained
	by first embedding $\P$ in $\realset^n$ (by the Whitney embedding theorem)
	and then restricting the Euclidean distance function on $\realset^n$ to the embedded
	submanifold. In particular, in the case when $\P=\realset^n$, the distance function
	used is just the usual Euclidean distance. See~\cite{Ha2011} for details.
\end{remark}

Let us now consider the special case when $\P=\coTQ$ (as in this paper).
Denote by $\XcalHam(\coTQ)$ the space of Hamiltonian vector fields
on $\coTQ$ (with respect to the canonical symplectic form), i.e.,
$X\in\XcalHam(\coTQ)$ implies that $X=X_H$ for some Hamiltonian function
$H\in\Fcal(\coTQ)$. Correspondingly, we have the subgroup
of exact symplectic diffeomorphisms $\DiffHam(\coTQ)$.
It is well known that $\exp(X)\in\DiffHam(\coTQ)$ if and only
if $X\in\XcalHam(\coTQ)$.

\begin{remark}
	Notice that there is, in general, a difference between
	\emph{Hamiltonian vector fields} $\XcalHam(\coTQ)$
	and \emph{symplectic vector fields} $\XcalSp(\coTQ)$,
	the former being a subalgebra of the latter.
	Indeed, $X\in\XcalSp(\coTQ)$ means that $X$
	preserves the symplectic form, whereas $X\in\XcalHam(\coTQ)$
	means that there exists a globally defined $H\in\Fcal(\coTQ)$
	such that $X=X_H$. Correspondingly, the group of 
	\emph{exact symplectic maps} $\DiffHam(\coTQ)$ is
	a subgroup of the group of \emph{symplectic maps}~$\DiffSp(\coTQ)$.
	See e.g.~\cite{KrMi1997b,MoPeMaMc2011} for further reading.
\end{remark}

Now, let $\Phi^h$ be an integrator for $X_H\in\XcalHam(\coTQ)$.
Naturally, $\Phi^h$ is called a \emph{symplectic integrator}
if $\Phi^h\in\DiffSp(\coTQ)$ for each fixed~$h>0$, and it is called an
\emph{exact symplectic integrator} if $\Phi^h \in \DiffHam(\coTQ)$ for each fixed~$h>0$. 
In addition to
Theorem~\ref{thm:bea_main_result}, another basic
result in backward error analysis is that if $\Phi^h$ is an
exact symplectic integrator, then each of the vector fields in the
formal series~\eqref{eq:formal_bea_series} belong to~$\XcalHam(\coTQ)$.
Thus, each of these vector fields correspond to a Hamiltonian function.
For a real analytic Hamiltonian system integrated by an exact symplectic
integrator, application of Theorem~\ref{thm:bea_main_result} yields
that if the numerical solution stays on a compact subset, then
there exists a truncated 
modified Hamiltonian function (corresponding to a truncated modified
Hamiltonian vector field) which is conserved by the numerical solution up
to an exponentially small term for exponentially long times
(see e.g.~\cite[Sect.~IX.8]{HaLuWa2006}). Furthermore, it can
be shown that the family of truncated modified Hamiltonian functions
have a common Lipschitz constant, independent of the 
step size~\cite[Sect.~IX.7.2]{HaLuWa2006}.

A deeper consequence of the modified vector field preserving the
Hamiltonian structure is that
if $X_H$ is \emph{Arnold-Liouville integrable} (cf.~\cite[Sect.~X.1]{HaLuWa2006}), 
i.e., it has invariant tori,
then an exact symplectic integrator will preserve perturbed tori for
exponentially long times
(under certain technical assumptions associated with KAM theory).


\subsection{Application to Perturbed Hamiltonian Problems} 
\label{sub:application_to_perturbed_hamiltonian_problems_rev2}

In this section we derive the modified vector fields for
the numerical methods suggested in Section~\ref{sec:numerical_integration_schemes}.
Our aim is to study the energy dissipation behavior of the methods,
using a backward error analysis approach. The idea is to study the
evolution in the numerical solution
of a truncated modified Hamiltonian function.
In particular, we show that up to an exponentially small term,
the suggested methods have asymptotically correct energy behavior
in the dissipation parameter~$\eps$. Furthermore, the evolution of a
truncated modified Hamiltonian
for Algorithm~\ref{alg:3S}, is monotone up to an exponentially small term.

For convenience, we say that a numerical method for
perturbed Hamiltonian systems of the
form~\eqref{eq:perturbed_Hamiltonian_system} is
(exact) symplectic, if it is (exact) symplectic for~$\eps=0$.
Thus, the suggested methods Algorithm~\ref{alg:3S}--\ref{alg:RKS}
are exact symplectic, which follows directly from the splitting
approach, and the fact that $\DiffHam(\coTQ)$ forms a group under composition.
Throughout this section, we assume that phase space~$\coTQ$
carries a real analytic structure.

%

Let $Z_\eps = X_T + X_V + \eps Y$ be the vector field 
in equation~\ref{eq:perturbed_Hamiltonian_system_condense}. From
equation~\ref{eq:exact_energy_dissipation_rate} it follows that
\begin{equation}\label{eq:exact_energy_dissipation_rate_integral_form}
	H\big(\!\exp(t Z_\eps)(z)\big)-H(z) = -\eps \int_{0}^t \mathsf{d}_{q(s)}(p(s)^\sharp,p(s)^\sharp)\ud s
\end{equation}
where $(q(s),p(s)) = \exp(s Z_\eps)(z)$. 
Our aim is to find a modified analog 
of~\eqref{eq:exact_energy_dissipation_rate_integral_form}
corresponding a numerical integrator, ideally by
replacing the Hamiltonian with a modified Hamiltonian,
and the dissipation tensor with a modified dissipation tensor.
In particular, there are two qualitative features of
equation~\eqref{eq:exact_energy_dissipation_rate_integral_form}
that we would like to find analogs of in the numerical solution:
\begin{itemize}
	\item $H\big(\!\exp(t Z_\eps)(z)\big)-H(z)$ is proportional to $\eps$;
	\item $H\big(\!\exp(t Z_\eps)(z)\big)-H(z) \leq 0$
	for $t\geq 0$.
\end{itemize}

We first give the general structure of the formal modified vector field
corresponding to a symplectic method:

\begin{lemma}\label{lem:formal_modified_vectorfield_general_symplectic}
	Let $\Phi^h$ be an exact symplectic integrator of order~$r$ 
	for problem~\eqref{eq:perturbed_Hamiltonian_system}.
	Then its formal modified vector field is of the form
	\[
		Z_{h,\eps} = X_T + X_V + \eps Y + \sum_{k=r}^\infty h^k (X_{H_k} + \eps Y_{k,\eps})
	\]
	where $X_{H_k}\in\XcalHam(\coTQ)$ and $Y_{k,\eps}\in\Xcal(\coTQ)$ depending
	smoothly on~$\eps$.
\end{lemma}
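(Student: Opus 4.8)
The plan is to combine two facts established earlier in the excerpt. First, the general theory of backward error analysis (the recursive construction of $X_0,X_1,X_2,\ldots$ given just after Definition~\ref{def:integrator}) tells us the formal modified vector field exists and has the form $Z_{h,\eps}=\sum_{k\geq 0}h^k X_k$, where $X_0=Z_\eps=X_T+X_V+\eps Y$. Since the integrator $\Phi^h$ has order~$r$, the remark following the recursion gives $X_k=0$ for $0<k<r$, so the series already has the stated shape $Z_{h,\eps}=X_T+X_V+\eps Y+\sum_{k=r}^\infty h^k X_k$. The content of the lemma is therefore entirely in the claimed \emph{decomposition} of each higher-order term $X_k$ into a Hamiltonian piece $X_{H_k}$ and an $\eps$-proportional remainder $\eps Y_{k,\eps}$.

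\emph{First I would} handle the $\eps=0$ case. When $\eps=0$ the method is, by the convention adopted in this section, an exact symplectic integrator for the Hamiltonian vector field $X_T+X_V$. By the basic result quoted in Subsection~\ref{sub:review_of_framework_rev_2} (each vector field in the formal series for an exact symplectic integrator lies in $\XcalHam(\coTQ)$), every coefficient $X_k\big|_{\eps=0}$ is a Hamiltonian vector field; \emph{define} $X_{H_k}:=X_k\big|_{\eps=0}\in\XcalHam(\coTQ)$. \emph{Next I would} define the remainder by $\eps Y_{k,\eps}:=X_k-X_{H_k}$, which vanishes at $\eps=0$ by construction. The one thing left to verify is that this remainder is genuinely proportional to $\eps$, i.e.\ that $Y_{k,\eps}:=(X_k-X_{H_k})/\eps$ extends to a vector field depending \emph{smoothly} on $\eps$ (including at $\eps=0$), rather than merely continuously.

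The cleanest way to secure smoothness in $\eps$ is to observe that the entire BEA construction is smooth (indeed polynomial in the combinatorial coefficients) in the data of the split vector fields, and here $\eps$ enters the method only linearly, through $\exp(\tfrac{h}{2}\eps Y)$ in $\PhiThreeS^h$, through $\exp(h(X_V+\eps Y))$ in $\PhiTwoS^h$, and through $\Psi^h_{X_V+\eps Y}$ in $\PhiRKS^h$. Concretely, one expands each exponential/flow in its Baker--Campbell--Hausdorff (BCH) series: the coefficient $X_k$ is a finite $\realset$-linear combination of iterated Lie brackets of $X_T$, $X_V$ and $\eps Y$, hence a \emph{polynomial} in $\eps$ whose constant term is exactly the $\eps=0$ contribution $X_{H_k}$. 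Thus $X_k-X_{H_k}$ is a polynomial in $\eps$ with zero constant term, so it is divisible by $\eps$ and the quotient $Y_{k,\eps}$ is polynomial — in particular smooth — in $\eps$. I would carry out the BCH bookkeeping for one representative method (say Algorithm~\ref{alg:3S}, whose symmetric three-term structure is most transparent) and note that the argument for Algorithms~\ref{alg:2S} and~\ref{alg:RKS} is identical since each depends on $\eps$ only through a factor in which $\eps Y$ appears linearly.

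\emph{The main obstacle} I anticipate is the smoothness-in-$\eps$ point for Algorithm~\ref{alg:RKS}: unlike the two exponential-based schemes, $\Psi^h_{X_V+\eps Y}$ is a Runge--Kutta map whose modified vector field is not literally given by a BCH bracket series in $X_V$ and $\eps Y$. Here I would instead invoke the recursive defining formula for the $X_k$ directly, arguing by induction that each $X_k$ depends smoothly on $\eps$ (because $\Phi^h$ does, as $\eps Y$ enters the Runge--Kutta stages smoothly and linearly) and that $X_k\big|_{\eps=0}$ is Hamiltonian (by the exact-symplectic property at $\eps=0$); smoothness of $X_k$ in $\eps$ together with $X_k-X_{H_k}$ vanishing at $\eps=0$ then yields the factorization $X_k-X_{H_k}=\eps Y_{k,\eps}$ with $Y_{k,\eps}$ smooth, by Hadamard's lemma. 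This inductive/Hadamard route in fact covers all three algorithms uniformly, so I would likely present it as the primary argument and relegate the explicit BCH computation to a confirming remark.
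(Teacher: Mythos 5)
Your proposal is correct and takes essentially the same route as the paper: the paper's proof likewise notes that at $\eps=0$ exact symplecticity forces $Z_{h,0}=X_T+X_V+\sum_{k\geq r}h^kX_{H_k}$ with $X_{H_k}\in\XcalHam(\coTQ)$, and then obtains the $\eps$-factorization of the remainder from smooth dependence of $\Phi^h$ on $\eps$ via Taylor's theorem — which is exactly your Hadamard-lemma step. The BCH bookkeeping you sketch is unnecessary (and the paper omits it), since the lemma concerns a general exact symplectic integrator, for which your inductive/Hadamard argument already suffices.
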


\begin{proof}
	For $\eps=0$ the formal series is of the form
	\[
		Z_{h,0} = X_T + X_V  + \sum_{k=r}^\infty h^k X_{H_k} ,
	\]
	since the method is exact symplectic.
	The result now follows from Taylor's theorem 
	since $\Phi^h$ depends smoothly on~$\eps$.
\end{proof}

From this result we get a modified analog of
equation~\eqref{eq:exact_energy_dissipation_rate_integral_form}
for general exact symplectic integrators. Indeed, let
$H_{h,N} = T + V + \sum_{k=r}^Nh^k H_k$, with $H_k$ as in
Lemma~\ref{lem:formal_modified_vectorfield_general_symplectic}. Then we
have the following result:

\begin{theorem}\label{thm:energy_dissipation_rate_general_integrator}
	Let $\Phi^h$ be an exact symplectic integrator of order~$r$ for 
	problem~\eqref{eq:perturbed_Hamiltonian_system},
	and let $\set U\subset\coTQ$ be a compact subset.
	Assume that $Z_{\eps}=X_T+X_V+\eps Y$ is real analytic in $\set U$,
	and that $h\mapsto\Phi^h(z)$ is real analytic for every $z\in\set U$.
	Then there is a truncation index $N$ depending on $h$ and $\eps$ 
	such that
	\[
		\Big| H_{h,N}(\Phi^h(z)) - H_{h,N}(z) + \eps \int_0^{h} 
			\mathsf{d}_{h,N,\eps}\big(\!\exp(s Z_{h,N,\eps})(z)\big)\ud s \Big|
		\leq h \lambda C \e^{-\gamma/h}
	\]
	whenever $h\leq h_0$ and $\eps\leq \eps_0$, where $\lambda,C,\gamma,h_0,\eps_0>0$ 
	are constants not depending on~$h$ or~$\eps$, and where
	\[
		\mathsf{d}_{h,N,\eps}(z) := \mathsf{d}_{q}(p^\sharp,p^\sharp) -
		\sum_{k=r}^N h^k\pair{\!\ud H_{h,N}(z),Y_{k,\eps}(z)},
	\]
	for $Y_{k,\eps}$ as in Lemma~\ref{lem:formal_modified_vectorfield_general_symplectic}.
\end{theorem}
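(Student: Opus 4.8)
The plan is to prove an \emph{exact} identity for the flow of the truncated modified vector field and only then transfer it to the genuine map $\Phi^h$ via Theorem~\ref{thm:bea_main_result}, at the cost of an exponentially small term. Write $Z_{h,N,\eps} = X_T + X_V + \eps Y + \sum_{k=r}^N h^k(X_{H_k}+\eps Y_{k,\eps})$ for the order-$N$ truncation of the series in Lemma~\ref{lem:formal_modified_vectorfield_general_symplectic}. Using linearity of the assignment $H\mapsto X_H$, I would split this as $Z_{h,N,\eps} = X_{H_{h,N}} + \eps W_{h,N,\eps}$, where $H_{h,N} = T+V+\sum_{k=r}^N h^k H_k$ is the truncated modified Hamiltonian and $W_{h,N,\eps} = Y + \sum_{k=r}^N h^k Y_{k,\eps}$ is the (non-Hamiltonian) dissipative remainder.

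The core step is the fundamental theorem of calculus along this flow,
\[
	H_{h,N}\big(\!\exp(hZ_{h,N,\eps})(z)\big) - H_{h,N}(z) = \int_0^{h} \pair{\ud H_{h,N}, Z_{h,N,\eps}}\big(\!\exp(sZ_{h,N,\eps})(z)\big)\ud s .
\]
The Hamiltonian part contributes nothing, since $\pair{\ud H_{h,N}, X_{H_{h,N}}} = \pois{H_{h,N},H_{h,N}} = 0$, so only the $\mathcal{O}(\eps)$ dissipative part of the integrand survives. Contracting $\ud H_{h,N}$ with $Y = (0,-\mathsf{d}_q(p^\sharp,\cdot))$ yields the leading dissipation $-\mathsf{d}_q(p^\sharp,p^\sharp)$, and together with the corrections $\sum_{k=r}^N h^k\pair{\ud H_{h,N},Y_{k,\eps}}$ this reproduces exactly $-\eps\,\mathsf{d}_{h,N,\eps}$ as defined in the statement. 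This is the sought identity, holding with \emph{vanishing} right-hand side for the modified flow.

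To replace the modified flow by $\Phi^h$, I would invoke Theorem~\ref{thm:bea_main_result} with the truncation index $N$ it supplies, giving $\mathrm{dist}\big(\Phi^h(z),\exp(hZ_{h,N,\eps})(z)\big)\leq hCe^{-\gamma/h}$ on $\set U$, and then use that the truncated modified Hamiltonians admit a common, step-size-independent Lipschitz constant $\lambda$ (recalled in Section~\ref{sub:review_of_framework_rev_2}) to bound $\big|H_{h,N}(\Phi^h(z)) - H_{h,N}(\exp(hZ_{h,N,\eps})(z))\big| \leq \lambda h C e^{-\gamma/h}$. Combined with the exact identity above, this gives the stated estimate.

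The main obstacle will be \emph{uniformity in} $\eps$: Theorem~\ref{thm:bea_main_result} and the Lipschitz bound are phrased for a single fixed vector field, whereas here the constants $C,\gamma,h_0,\lambda$ and the index $N$ must be chosen independently of $\eps\in[0,\eps_0]$. I would resolve this by treating $\eps$ as a frozen additional coordinate and applying the analytic backward-error estimate on the compact product $\set U\times[0,\eps_0]\subset\coTQ\times\realset$, exploiting the real-analytic (hence, on compacta, uniformly bounded) dependence of the modified data on $\eps$ guaranteed by Lemma~\ref{lem:formal_modified_vectorfield_general_symplectic}; the same compactness argument furnishes the common Lipschitz constant $\lambda$.
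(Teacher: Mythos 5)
Your proposal is correct and follows essentially the same route as the paper's proof: the same splitting $Z_{h,N,\eps}=X_{H_{h,N}}+\eps\,(\text{dissipative remainder})$, the same exact dissipation identity along the truncated modified flow (with the Hamiltonian part annihilated by $\pois{H_{h,N},H_{h,N}}=0$), the transfer to $\Phi^h$ via Theorem~\ref{thm:bea_main_result} together with the $h$--independent Lipschitz constant for $H_{h,N}$, and even the same device for uniformity in $\eps$, namely freezing $\eps$ as an extra coordinate and applying the backward-error estimate on the compact product $\set U\times[0,\eps_0]$.
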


\begin{proof}
	From truncation of the modified vector field 
	in Lemma~\ref{lem:formal_modified_vectorfield_general_symplectic}
	it follows that
	\[
		\begin{split}
			\LieD{Z_{h,N,\eps}}H_{h,N}(z) &= \pair{\!\ud H_{h,N}(z),Z_{h,N,\eps}(z)}
			\\
			&= \pair{\!\ud H_{h,N}(z), X_{H_{h,N}(z)} + \eps Y_{h,N,\eps}(z) }
			\\
			&= \eps \pair{\!\ud H_{h,N}(z),Y_{h,N,\eps}(z)}
			\\
			&= -\eps \big( 
				\underbrace{
				\mathsf{d}_q(p^\sharp,p^\sharp) -
				\sum_{k=r}^N h^k\pair{\!\ud H_{h,N}(z),Y_{k,\eps}(z)}
				}_{\mathsf{d}_{h,N,\eps}(z)}
				\big) .
		\end{split}
	\]
	Thus, 
	\begin{equation*}\label{eq:modH_evolution}
		H_{h,N}\big(\!\exp(t Z_{h,N,\eps})(z)\big)-
		H_{h,N}(z) = -\eps\int_0^t \mathsf{d}_{h,N,\eps}\big(\!\exp(s Z_{h,N,\eps})(z)\big)\ud s .
	\end{equation*}
	Moreover, we have
	\[
		\begin{split}
			H_{h,N}\big( \Phi^h(z)\big) - H_{h,N}(z)
			&=
			H_{h,N}\big(\!\exp(t Z_{h,N,\eps})(z)\big) - H_{h,N}(z)
			\\
			&+
			H_{h,N}\big( \Phi^h(z)\big) - H_{h,N}\big(\!\exp(t Z_{h,N,\eps})(z)\big),
		\end{split}
	\]
	so we get
	\[
		\begin{split}
			\big| 
				H_{h,N}\big( \Phi^h(z)\big) - H_{h,N}(z)
				+ \eps\int_0^t \mathsf{d}_{h,N,\eps}\big(\!\exp(s Z_{h,N,\eps})(z)\big)\ud s
			\big| 
			\\
			=
			\big|
				H_{h,N}\big( \Phi^h(z)\big) - H_{h,N}\big(\!\exp(t Z_{h,N,\eps})(z)\big)
			\big|
		\end{split}
	\]
	%
	%
	Let $\lambda>0$ be an $h$--independent Lipschitz constant for $H_{h,N}$
	with respect to the distance function~$\mathrm{dist}$ in 
	Theorem~\ref{thm:bea_main_result}
	(such a Lipschitz bound is known to exists for truncated modified Hamiltonians,
	see e.g.~\cite[Theorem~8.1, Sect.~IX.8]{HaLuWa2006}).
	Thus, an estimate for the right hand side is
	\begin{equation}\label{eq:H_estimate}
		\begin{split}
			\big|
				H_{h,N}\big(\Phi^h(z)\big) -H_{h,N}\big(\!\exp(n h Z_{h,N,\eps})(z)\big)
			\big|
			&
			\leq \lambda \, \mathrm{dist}\big(\Phi^h(z),\exp(h Z_{h,N,\eps})(z)\big) .
		\end{split}
	\end{equation}
	Now, in order to get a bound which is uniform in~$\eps$, let $\eps_0>0$
	and consider the extended map $\hat\Phi^h(z,\eps) := (\Phi^h(z),\eps)$ on $\set U \times [0,\eps_0]$
	(notice that $\Phi^h$ depends on~$\eps$), which is an integrator
	for the extended vector field $\hat Z(z,\eps) := (Z_\eps(z),0)$.
	Since the extended $\eps$--part is integrated exactly by $\hat\Phi^h$,
	it holds that $\exp(h \hat Z_{h,N})(z,\eps) = (\exp(h Z_{h,N,\eps})(z),\eps)$, so we have
	\[
		\widehat{\mathrm{dist}}\big(\hat\Phi^h(z,\eps), \exp(h \hat Z_{h,N})(z,\eps)\big) 
		= \Big(\mathrm{dist}\big(\Phi^h(z),\exp(h Z_{h,N,\eps})(z)\big)^2 + \abs{\eps-\eps}^2\Big)^{\frac{1}{2}}.
	\]
	Thus, in combination with~\eqref{eq:H_estimate} we have
	\[
		\begin{split}
			\big|
				H_{h,N}\big(\Phi^h(z)\big) -H_{h,N}\big(\!\exp(n h Z_{h,N,\eps})(z)\big)
			\big|
			&
			\leq \lambda \, \widehat{\mathrm{dist}}\big(\hat\Phi^h(z,\eps), \exp(h \hat Z_{h,N})(z,\eps)\big)
			\\
			& \leq h \lambda C \e^{-\gamma/h}
		\end{split}
	\]
	for $h\leq h_{0} > 0$, where the last estimate follows from Theorem~\ref{thm:bea_main_result}
	applied to the extended integrator $\hat\Phi^h$, which can be done since $\set U\times [0,\eps_0]$
	is compact and $\hat Z(z,\eps)$ is real analytic on $\set U\times [0,\eps_0]$.
	This proves the theorem.
\end{proof}

From the result we see that, up to an exponentially small term, the modified
energy evolves with a rate which is $\mathcal{O}(\eps h^r)$ close
to the energy dissipation rate of the exact solution.
Thus, the dynamics is asymptotically correct in
the perturbation parameter~$\eps$.

In transient regions of the phase space, i.e., regions where
the value of dissipation tensor $\mathsf{d}$ is relatively large,
Theorem~\ref{thm:energy_dissipation_rate_general_integrator} asserts
that the energy evolves in a monotone fashion. More precisely,
let $\set U\subset\coTQ$ be a compact subset such that
$\sup_{z\in\set U}
\abs{\mathsf{d}_{h,N,\eps}(z)-\mathsf{d}_q(p^\sharp,p^\sharp)}/\mathsf{d}_q(p^\sharp,p^\sharp) 
\ll 1$
for the chosen step size.
As long as the numerical solution stays in $\set U$,
Theorem~\ref{thm:energy_dissipation_rate_general_integrator} then
asserts that the modified Hamiltonian is
decreasing from step to step, and that the numerical dissipation
rate is
accurate relative to the exact dissipation rate.
Thus, the transient phase in the simulation is captured
very well with a symplectic integrator, which is often important
in application, for example when accurate estimates of energy losses are essential.

After the transient phase 
some type of ``steady state'' 
dynamics is commonly reached. 
In phase space, this
often occur close to the subset
$\ker \mathsf{d} = \{ z\in\coTQ ; \mathsf{d}_{q}(p^\sharp,p^\sharp)=0\}$, i.e.,
where the dissipation vanishes. Furthermore, it is common that
dissipative forces (e.g.~friction forces) are small relative
to conservative forces. In such regions,
one does not necessarily have that $\mathsf{d}_{h,N,\eps}(z)\geq 0$, so
Theorem~\ref{thm:energy_dissipation_rate_general_integrator}
does not assert that the evolution of the modified Hamiltonian is monotone.
However, for the special case of the suggested Algorithm~\ref{alg:3S},
we now derive a refined version of 
Theorem~\ref{thm:energy_dissipation_rate_general_integrator},
for which monotone decrease of the modified Hamiltonian is asserted
for small enough step sizes (up to an exponentially small correction term).
%
%
The result is based on the following two lemmas:

\begin{lemma}\label{lem:even_order_modified_hamiltonian_rev2}
	The formal modified Hamiltonian $H_h$ for the
	Störmer-Verlet method $\PhiSV^h$ 
	applied to problem~\eqref{eq:perturbed_Hamiltonian_system_condense}
	with $\eps=0$
	is of the form 
	\[
		\begin{split}
			H_h(q,p) = \mathsf{g}_q(p^\sharp,p^\sharp) + V(q) 
			& + \sum_{\ell=1}^\infty h^{2\ell}\,V_{2 \ell}(q) \\
			& + \sum_{k=1}^\infty
			\sum_{\ell = 1}^\infty h^{2(k+\ell-1)} 
			(\mathsf{g}_{2k,2\ell})_q(\underbrace{p^\sharp,\ldots,p^\sharp}_{\text{$2k$ copies}})			
		\end{split}
		%
	\]
	where $V_{2\ell} \in \Fcal(\coTQ)$ and 
	$\mathsf{g}_{2k,2\ell}\in\mathcal{T}_{2k}^0(\Q)$ are a totally
	symmetric tensor fields.
	%
	%
\end{lemma}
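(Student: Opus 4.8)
The plan is to combine three ingredients: the symmetry and symplecticity of $\PhiSV^h$ to control the powers of $h$; the Baker--Campbell--Hausdorff (BCH) description of the modified Hamiltonian as a formal series of iterated Poisson brackets of $T$ and $V$; and a grading of $\Fcal(\coTQ)$ by homogeneity degree in $p$ to control the $p$-dependence.

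First I would record that $\PhiSV^h = \exp(\tfrac h2 X_T)\circ\exp(h X_V)\circ\exp(\tfrac h2 X_T)$ is both symplectic and symmetric, the latter meaning $(\PhiSV^h)^{-1} = \PhiSV^{-h}$, which is immediate from the composition. Symplecticity (already noted in the excerpt) gives that the formal modified vector field is Hamiltonian, $Z_h = X_{H_h}$ with $H_h\in\Fcal(\coTQ)$, while the BCH formula expresses $h H_h = \mathrm{BCH}(\tfrac h2 T, hV, \tfrac h2 T)$ as a formal Lie series in $T$ and $V$ under the Poisson bracket, i.e.\ a sum of iterated brackets $\pois{F,G}$. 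Symmetry then forces $H_{-h} = H_h$: indeed $\exp(-h X_{H_{-h}}) = \PhiSV^{-h} = (\PhiSV^h)^{-1} = \exp(-h X_{H_h})$, so only even powers of $h$ survive in $H_h$, accounting for the exponents $2\ell$ and $2(k+\ell-1)$ in the statement (alternatively one may cite the even-power expansion of symmetric methods in~\cite{HaLuWa2006}).

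The heart of the argument is the degree count. I would grade polynomial-in-$p$ functions by their homogeneity degree in $p$, noting that $T$ has degree $2$, $V$ degree $0$, and --- since $\partial/\partial p$ lowers the $p$-degree by one while $\partial/\partial q$ preserves it --- the bracket $\pois{F,G}$ of homogeneous $F,G$ of degrees $a,b$ has degree $a+b-1$. A bracket word with $m$ factors $T$ and $n$ factors $V$ therefore has $p$-degree $2m-(m+n-1) = m-n+1$ and contributes at order $h^{m+n}$ to $h H_h$. By the symmetry step only words with $m+n$ odd occur, whence $m-n+1$ is even: every homogeneous component of $H_h$ has even $p$-degree. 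Collecting the degree-$0$ pieces yields the purely configurational terms $V_{2\ell}(q)$, and the degree-$2k$ pieces yield the kinetic-type terms; solving $m+n-1 = 2(k+\ell-1)$ together with $m-n+1 = 2k$ gives $n=\ell$ and $m = 2k+\ell-1$, so that $\ell$ counts the factors of $V$ and the order in $H_h$ is $h^{m+n-1} = h^{2(k+\ell-1)}$, reproducing exactly the double sum with the stated index ranges $k,\ell\ge 1$.

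Finally, for each fixed $q$ a homogeneous degree-$2k$ polynomial on the fiber $\T^*_q\Q$ corresponds, by polarization, to a totally symmetric element of $\mathcal{T}_{2k}^0(\Q)$ evaluated on $2k$ copies of $p^\sharp$; since $T$, $V$ and the Poisson bracket are all intrinsically defined, these coefficients assemble into genuine smooth, totally symmetric tensor fields $\mathsf{g}_{2k,2\ell}$ and functions $V_{2\ell}$, establishing the claimed form. I expect the main obstacle to be the parity bookkeeping in the degree count: one must verify that the brackets actually surviving (those with at least one $T$ and one $V$, the pure-$T$ brackets vanishing) respect the correlation between the parity of $m+n$ and that of the $p$-degree, and that the index ranges come out correctly. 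The tensoriality and total symmetry are then routine, given the coordinate-independence already established in Proposition~\ref{pro:coordinate_invariance}.
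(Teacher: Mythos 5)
Your proof is correct, and it shares its skeleton with the paper's proof: both expand the Störmer--Verlet map by the Baker--Campbell--Hausdorff formula in the Poisson algebra generated by $T$ and $V$, both exploit that functions of the form ``symmetric tensor field contracted with copies of $p^\sharp$'' are closed under the Poisson bracket, and both then remove half of the terms by parity. The genuine difference lies in where the parities come from. The paper invokes two separate structural facts: reversibility of $\PhiSV^h$, giving $H_h(q,p)=H_h(q,-p)$ and hence even $p$-degree, and symmetry of $\PhiSV^h$, giving even powers of $h$. You use only symmetry, and recover the even $p$-degree from your grading: a bracket word with $m$ factors of $T$ and $n$ factors of $V$ has $p$-degree $m-n+1$ and enters $H_h$ at order $h^{m+n-1}$, and since $(m+n-1)-(m-n+1)=2(n-1)$ is even, the two parities agree term by term, so evenness in $h$ already forces evenness in $p$. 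This makes your argument slightly more economical (one structural hypothesis instead of two), and your explicit solution of the index equations ($n=\ell$, $m=2k+\ell-1$) spells out the bookkeeping behind the double sum, which the paper leaves implicit. Conversely, the paper's explicit closure computation produces a concrete recursion for the tensors $\mathsf{g}_{2k,2\ell}$, whereas you obtain them abstractly by polarization; both deliver the tensor form needed downstream in Lemma~\ref{lem:diss_mod_Ham_for_SV}. A small further point in your favour: your count that the bracket of fiberwise-homogeneous polynomials of degrees $a$ and $b$ is homogeneous of degree $a+b-1$ is the right one; the paper's closure lemma asserts $\mathsf{c}\in\mathcal{T}_{k+\ell}^0(\Q)$, although the displayed formula actually yields a tensor with $k+\ell-1$ slots --- an off-by-one slip that your grading sidesteps.
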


\begin{proof}
	If $\mathsf{a}\in\mathcal{T}_{k}^0(\Q)$ and $\mathsf{b}\in\mathcal{T}_{\ell}^0(\Q)$
	are two totally symmetric tensor fields, then the Poisson bracket
	between the functions $(q,p)\mapsto\mathsf{a}_q(p^\sharp,\ldots,p^\sharp)$ and
	$(q,p)\mapsto\mathsf{b}_q(p^\sharp,\ldots,p^\sharp)$ is given by
	$(q,p)\mapsto\mathsf{c}_q(p^\sharp,\ldots,p^\sharp)$, where $\mathsf{c}\in\mathcal{T}_{k+\ell}^0(\Q)$
	is the totally symmetric tensor 
	\[
		\begin{split}
			\mathsf{c}_q(\cdot,\ldots,\cdot) &=
			\mathsf{a}_q(\pd_q\mathsf{b}_q(\cdot,\ldots,\cdot),\cdot,\ldots,\cdot)
			+ \ldots +
			\mathsf{a}_q(\cdot,\ldots,\cdot,\pd_q\mathsf{b}_q(\cdot,\ldots,\cdot))\\
			- &
			\mathsf{b}_q(\pd_q\mathsf{a}_q(\cdot,\ldots,\cdot),\cdot,\ldots,\cdot)
			+ \ldots +
			\mathsf{b}_q(\cdot,\ldots,\cdot,\pd_q\mathsf{a}_q(\cdot,\ldots,\cdot)).
		\end{split}
	\]
	Since Störmer-Verlet is a splitting method, and since $V$ and $\mathsf{g}$
	are tensors, it follows from the Baker-Campbell-Hausdorff formula
	that the modified Hamiltonian is a sum of terms of the form
	$h^{i+j}\mathsf{g}_{i,j}(p^\sharp,\ldots,p^\sharp)$,
	where $\mathsf{g}_{i,j}\in\mathcal{T}_i^0(\Q)$.
	Further, since the Störmer-Verlet method preserves
	reversibility 
	it holds that $H_h(q,p) = H_h(q,-p)$. Thus, only the
	even order tensors remain. Also, only the even powers of $h$
	survive, since the method is symmetric. This yields the result.
	%
	%
\end{proof}

\begin{lemma}\label{lem:diss_mod_Ham_for_SV}
	Let $Y\in\Xcal(\coTQ)$ be the dissipative vector field in 
	equation~\eqref{eq:perturbed_Hamiltonian_system_condense}, and
	let $H_{h,N}$ be the truncation of the formal modified Hamiltonian in
	Lemma~\ref{sub:application_to_perturbed_hamiltonian_problems_rev2}.
	Further, let $\set U\subset \coTQ$ be a compact subset.
	Then there exists $h_0>0$ such that 
	\[
		\LieD{Y}H_{h,N} = - \mathsf{d}_q(E_{h,N}(z)p^\sharp,p^\sharp) \leq 0
	\]
	for all $z\in\set U$ whenever $h\leq h_0$,
	where $E_{h,N}(z):\T_q\Q\to\T_q\Q$ is an operator which is self-adjoint and
	positive definite with respect to $\mathsf{d}$, and
	such that $E_{h,N}(z) - \Id = \mathcal{O}(h^2)$ for all $z\in\set U$.
	%
	%
\end{lemma}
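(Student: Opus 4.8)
The plan is to compute $\LieD{Y}H_{h,N}$ directly from the explicit form of the modified Hamiltonian furnished by Lemma~\ref{lem:even_order_modified_hamiltonian_rev2}, and then repackage the result as a single $\mathsf{d}$--quadratic form in $p^\sharp$. Since the dissipative field $Y(q,p)=(0,-\mathsf{d}_q(p^\sharp,\cdot))$ has no $q$--component, only the fibre derivative of $H_{h,N}$ enters, so that
\[
	\LieD{Y}H_{h,N}(z) = \pair{-\mathsf{d}_q(p^\sharp,\cdot),\, \pd H_{h,N}/\pd p} = -\mathsf{d}_q\big(p^\sharp,\, \pd H_{h,N}/\pd p\big).
\]
First I would note that the purely configurational terms $V+\sum_\ell h^{2\ell}V_{2\ell}$ drop out, and that for each kinetic term $(\mathsf{g}_{2k,2\ell})_q(p^\sharp,\dots,p^\sharp)$ the fibre derivative is $2k\,(\mathsf{g}_{2k,2\ell})_q(p^\sharp,\dots,p^\sharp,\cdot)^\sharp$, using that the sharp operator is the self--adjoint inverse of the inertia operator $\Mcal$. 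Collecting terms gives $\pd H_{h,N}/\pd p = w(z)$ with $w = p^\sharp + \mathcal{O}(h^2)$, each correction being odd and homogeneous in $p^\sharp$, so I can factor out one copy of $p^\sharp$ and write $w = A_{h,N}(z)\,p^\sharp$ for a $z$--dependent linear map $A_{h,N}(z):\T_q\Q\to\T_q\Q$ with $A_{h,N}(z)-\Id = \mathcal{O}(h^2)$ uniformly on the compact set $\set{U}$.

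Next I would reformulate $\LieD{Y}H_{h,N} = -\mathsf{d}_q(A_{h,N}(z)\,p^\sharp,p^\sharp)$, using symmetry of $\mathsf{d}$, and seek to replace $A_{h,N}$ by the required $\mathsf{d}$--self--adjoint, $\mathsf{d}$--positive operator $E_{h,N}$ having the same quadratic--form value. In the case where $\mathsf{d}$ is \emph{nondegenerate} this is routine: $\mathsf{d}$ defines a genuine inner product on $\T_q\Q$, the $\mathsf{d}$--symmetrization of $A_{h,N}$ is well defined and satisfies $E_{h,N}-\Id=\mathcal{O}(h^2)$ (inheriting the estimate from $A_{h,N}$ via boundedness of the tensors $\mathsf{g}_{2k,2\ell}$, of $\Mcal^{-1}$, and of the relevant projections on $\set{U}$), and a self--adjoint $\mathcal{O}(h^2)$ perturbation of the identity on a finite--dimensional inner--product space is positive definite once $h$ is small, uniformly over the compact $\set{U}$, giving a single $h_0>0$. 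Positivity of $E_{h,N}$ then yields $\mathsf{d}_q(E_{h,N}p^\sharp,p^\sharp)\ge0$ and hence the claimed inequality.

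The step I expect to be the main obstacle is this last passage in the presence of a \emph{degenerate} $\mathsf{d}$, which is exactly the regime the paper emphasises. Here the regularity (constant--rank) hypothesis on $\mathsf{d}$ becomes essential: it makes $\ker\mathsf{d}$ a smooth subbundle with a smooth $\mathsf{d}$--orthogonal complement $W$ on which $\mathsf{d}$ restricts to an honest inner product, and self--adjointness and positivity should be formulated on $W$ (equivalently on the quotient $\T_q\Q/\ker\mathsf{d}_q$). The delicate point is that a naive Cauchy--Schwarz bound on the cross term $\mathsf{d}_q(p^\sharp,(A_{h,N}-\Id)p^\sharp)$ only controls it by $\mathsf{d}_q(p^\sharp,p^\sharp)^{1/2}$ times an $\mathcal{O}(h^2)$ factor that need not vanish near $\ker\mathsf{d}$, so near the kernel the correction is not obviously dominated by the leading term $\mathsf{d}_q(p^\sharp,p^\sharp)$. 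The decisive structural fact is that a $\mathsf{d}$--self--adjoint operator necessarily preserves $\ker\mathsf{d}$ and therefore cannot couple kernel directions into the range of $\mathsf{d}$; this is precisely what removes the dangerous cross terms and permits a uniform $h_0$.

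Accordingly, the real work lies in carrying out the $\mathsf{d}$--symmetrization \emph{intrinsically on the quotient bundle}, verifying that it preserves both the value of the form and the $\mathcal{O}(h^2)$ smallness, and then descending to the self--adjoint operator $\bar E_{h,N}=\Id+\mathcal{O}(h^2)$ on the inner--product bundle $W$, whose positivity for small $h$ gives $\mathsf{d}_q(E_{h,N}p^\sharp,p^\sharp)=\pair{\bar E_{h,N}\,\pi_W p^\sharp,\pi_W p^\sharp}_{\mathsf{d}}\ge 0$ and hence $\LieD{Y}H_{h,N}=-\mathsf{d}_q(E_{h,N}p^\sharp,p^\sharp)\le 0$ uniformly on $\set{U}$. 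This reduction through the constant--rank structure of $\mathsf{d}$, rather than the elementary computation of Step~1, is where I expect the genuine difficulty to concentrate.
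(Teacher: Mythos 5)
Your opening computation is correct and is, up to packaging, the same as the paper's: writing $\LieD{Y}H_{h,N}(z)=-\mathsf{d}_q\big(A_{h,N}(z)p^\sharp,p^\sharp\big)$ with $A_{h,N}(z)-\Id=\mathcal{O}(h^2)$ via the fibre derivative is equivalent to the paper's formulation $\LieD{Y}H_{h,N}=-\inner{\big(C(q)+h^2\mybreve{C}_{h,N}(z)\big)p^\sharp,p^\sharp}$ with the $\mathsf{g}$--self--adjoint operator $C(q)u=\mathsf{d}_q(u,\cdot)^\sharp$. Your treatment of the nondegenerate case is also fine, and you correctly isolate the crux of the degenerate case: Cauchy--Schwarz cannot absorb the $\mathcal{O}(h^2)$ correction near $\ker\mathsf{d}$, so everything hinges on the cross terms $\mathsf{d}_q\big((A_{h,N}(z)-\Id)u,v\big)$, $u\in\ker\mathsf{d}_q$, vanishing.

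However, your resolution of that crux is circular, and this is a genuine gap. You argue the cross terms are harmless because ``a $\mathsf{d}$--self--adjoint operator necessarily preserves $\ker\mathsf{d}$.'' That is a property of the operator $E_{h,N}$ you are trying to \emph{construct}, not of the function $\LieD{Y}H_{h,N}$ you are \emph{given}. The function is fixed by the modified Hamiltonian; whether it is $\leq 0$ near $\ker\mathsf{d}$ is equivalent to whether the specific operator $A_{h,N}(z)$ determined by the tensors $\mathsf{g}_{2k,2\ell}$ of Lemma~\ref{lem:even_order_modified_hamiltonian_rev2} maps $\ker\mathsf{d}_q$ into $\ker\mathsf{d}_q$ --- equivalently, whether the symmetrized bilinear form descends to the quotient $\T_q\Q/\ker\mathsf{d}_q$ at all. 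Your ``$\mathsf{d}$--symmetrization intrinsically on the quotient bundle'' presupposes exactly this descent; no repackaging can produce it, because it is a nontrivial structural property of the Störmer--Verlet modified Hamiltonian, not a piece of linear algebra. This is precisely the step the paper's proof supplies and that your proposal omits: the paper computes $\LieD{Y}$ of each term $(\mathsf{g}_{2k,2\ell})_q(p^\sharp,\ldots,p^\sharp)$ explicitly as a sum of terms with $C(q)p^\sharp$ inserted into one slot, and from this structure concludes that the resulting correction tensors $\mathsf{d}_{2k,2\ell}$ (hence the operator $\mybreve{C}_{h,N}(z)$) annihilate $\ker C(q)$. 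Only after that does the argument close, via the $\mathsf{g}$--orthogonal splitting $\T_q\Q=\ker C(q)\oplus(\ker C(q))^\bot$, self--adjointness (which makes $(\ker C(q))^\bot$ invariant), and --- the second point where your write-up is thinner than the paper's --- the constant--rank hypothesis combined with compactness of $\set U$, which gives a uniform positive lower bound on the nonzero spectrum of $C(q)$ and hence a single $h_0$; you invoke constant rank only for smoothness of the kernel bundle, which by itself does not give the uniform spectral gap. In short: you found the right obstacle, but the sentence you offer to remove it assumes the conclusion, and the actual content of the lemma in the degenerate case (the kernel--annihilation property of the $\mathcal{O}(h^2)$ correction, derived from the explicit form of the Lie--derivative tensors) never gets proved.
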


\begin{proof}
	Let $\inner{\cdot,\cdot}$ denote the metric $\mathsf{g}_q(\cdot,\cdot)$.
	Let $C(q):\T_q\Q\to\T_q\Q$ be the positive 
	semi-definite self-adjoint operator defined by
	$\inner{C(q)u,v}=\mathsf{d}_q(u,v)$.
	In other words, $C(q)u = \mathsf{d}_q(u,\cdot)^\sharp$.
	
	It holds that
	\[
		\begin{split}
			\LieD{Y}({\mathsf{g}}_{2k,2\ell})_q(p^\sharp,\ldots,p^\sharp) =\; &
			({\mathsf{g}}_{2k,2\ell})_q\big(C(q) p^\sharp,p^\sharp,\ldots,p^\sharp\big)
			+ \\ &
			({\mathsf{g}}_{2k,2\ell})_q\big(p^\sharp,C(q) p^\sharp,p^\sharp,\ldots,p^\sharp\big)
			+\ldots + \\ &
			({\mathsf{g}}_{2k,2\ell})_q\big(p^\sharp,\ldots,p^\sharp,C(q) p^\sharp\big)
			=: (\mathsf{d}_{2k,2\ell})_q(p^\sharp,\ldots,p^\sharp),
		\end{split}
	\]
	which defines the totally symmetric tensor $\mathsf{d}_{2k,2\ell}\in\mathcal{T}_{2k}^0(\coTQ)$.
	Notice that the kernel of $\mathsf{d}_{2k,2\ell}$ is contained in the kernel of $C(q)$,
	i.e., if $u\in\ker(C(q))$, then $(\mathsf{d}_{2k,2\ell})_q(u,\cdot,\ldots,\cdot)=0$.
	Next, we define the self-adjoint operator $C_{2k,2\ell}(z):\T_q\Q\to\T_q\Q$ by
	$\inner{C_{2k,2\ell}(z)u,v}=(\mathsf{d}_{2k,2\ell})_q(p^\sharp,\ldots,p^\sharp,u,v)$.
	Again, notice that $\ker(C_{2k,2\ell}(z))\subset\ker(C(q))$.
	All together, summing up the terms 
	up to order~$h^N$
	in Lemma~\ref{lem:even_order_modified_hamiltonian_rev2},
	we get
	\[
		\LieD{Y}H_{h,N}(q,p) =
			-\inner{C(q)p^\sharp,p^\sharp} -
			h^2\inner{\mybreve{C}_{h,N}(z)p^\sharp,p^\sharp}
	\]
	where $\mybreve{C}_{h,N}(z)$ is a self-adjoint operator $\T_q\Q\to\T_q\Q$,
	with $\ker(\mybreve{C}_{h,N}(z))\subset\ker(C(q))$.
	Since both $C(q)$ and $\mybreve{C}_{h,N}(z)$ are self-adjoint 
	it holds that $(\ker C(q))^\bot$ is an invariant subspace for both of them 
	(follows from the spectral theorem). 
	Since~$\mathsf{d}$ has constant rank and since
	$\set U$ is compact, it follows that 
	\[
		\inf_{z\in\set{U}}\norm{C(q)|_{(\ker C(q))^\bot}}^2>0.
	\]
	Thus, there is an $h_0>0$ such that $C(q)+h^2\mybreve{C}_{h,N}(z)$ stays positive
	semi-definite when $h\leq h_0$ and $z\in\set U$; the zero eigenvalues stays zero, 
	and the perturbation,
	acting in $(\ker C(q))^\bot$, is small enough for its positive eigenvalues to stay positive.
	Hence,
	\[
		\LieD{Y}H_{h,N}(z) = -\inner{\big(C(q)+h^2\mybreve{C}_{h,N}(z)\big)p^\sharp,p^\sharp} \leq 0
	\]
	if $h\leq h_0$ and $z\in\set{U}$. Next, define $E_{h,N}(z):\T_q\Q\to\T_q\Q$
	by $\mathsf{d}_q(E_{h,N}(z) u,v) = \inner{\big(C(q)+h^2\mybreve{C}_{h,N}(z)\big)u,v}$
	for $u,v\in (\ker C(q))^\bot$ and $E_{h,N}(z)|_{\ker C(q)} = \Id$.
\end{proof}

We now give the refined result on evolution of the
modified Hamiltonian function for Algorithm~\ref{alg:3S}.

\begin{theorem}\label{thm:monotonicity_mod_Ham_alg3S}
	Let $\set U\subset\coTQ$ be a compact subset, and assume that
	$\mathsf{g},\mathsf{d},V$ are real analytic on $\set U$.
	Then, for the integrator $\PhiThreeS^h$, defined by Algorithm~\ref{alg:3S},
	there exists $h_0>0$, depending on $\set U,\mathsf{g},\mathsf{d},V$ but not
	on $\eps$, and a truncation index $N$ depending on $h$, such that 
	\[
		\begin{split}
			\big|
				H_{h,N} & (\PhiThreeS^h(z)) - H_{h,N}(z) 
				+ 
				\eps \int_{0}^{h/2}
				\mathsf{d}_{h,N} \big(\! 
					\exp(s \eps Y)(z)
				\big)
				\ud s
				\\
				& +
				\eps \int_{0}^{h/2}
				\mathsf{d}_{h,N}
				\big(\!
					\exp(s \eps Y)\circ\PhiSV^h\circ\exp(h \eps Y/2)(z)
				\big)
				\ud s
			\big|
			\leq h \lambda C \e^{-\gamma/h}
		\end{split}
	\]
	whenever $h\leq h_0$ and $z\in\set U$,
	where $\lambda,C,\gamma,h_0>0$  are constants
	not depending on~$\eps$, and where
	\[
		\mathsf{d}_{h,N}(z) := \mathsf{d}_q(E_{h,N}(z)p^\sharp,p^\sharp) \geq 0
	\]
	%
	with $E_{h,N}(z):\T_q\Q\to\T_q\Q$ as in Lemma~\ref{lem:diss_mod_Ham_for_SV}.
	%
\end{theorem}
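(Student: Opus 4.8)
The plan is to exploit the product structure of $\PhiThreeS^h$. Since the central three factors of Algorithm~\ref{alg:3S} constitute precisely the Störmer--Verlet step for the unperturbed ($\eps=0$) problem, I would write
\[
	\PhiThreeS^h = \exp(\tfrac{h}{2}\eps Y)\circ\PhiSV^h\circ\exp(\tfrac{h}{2}\eps Y).
\]
Here $H_{h,N}$ denotes the truncation, to an index $N$ supplied by Theorem~\ref{thm:bea_main_result} applied to $\PhiSV^h$, of the formal modified Hamiltonian of Lemma~\ref{lem:even_order_modified_hamiltonian_rev2}; crucially this object is built purely from the $\eps=0$ method and so carries no $\eps$--dependence. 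Setting $z_0=z$, $z_1=\exp(\tfrac{h}{2}\eps Y)(z_0)$, $z_2=\PhiSV^h(z_1)$ and $z_3=\PhiThreeS^h(z)=\exp(\tfrac{h}{2}\eps Y)(z_2)$, I would telescope
\[
	H_{h,N}(z_3)-H_{h,N}(z_0)=\big[H_{h,N}(z_1)-H_{h,N}(z_0)\big]+\big[H_{h,N}(z_2)-H_{h,N}(z_1)\big]+\big[H_{h,N}(z_3)-H_{h,N}(z_2)\big]
\]
and handle the three brackets separately.

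For the two outer brackets I would integrate the Lie derivative along the dissipative flow. Because the curve $s\mapsto\exp(s\eps Y)(w)$ has velocity $\eps Y$, the fundamental theorem of calculus gives
\[
	H_{h,N}(\exp(\tfrac{h}{2}\eps Y)(w))-H_{h,N}(w)=\eps\int_0^{h/2}(\LieD{Y}H_{h,N})\big(\!\exp(s\eps Y)(w)\big)\ud s,
\]
and Lemma~\ref{lem:diss_mod_Ham_for_SV} identifies the integrand as $\LieD{Y}H_{h,N}=-\mathsf{d}_{h,N}$ with $\mathsf{d}_{h,N}(z)=\mathsf{d}_q(E_{h,N}(z)p^\sharp,p^\sharp)$. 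Applying this once with $w=z_0$ and once with $w=z_2=\PhiSV^h\circ\exp(\tfrac{h}{2}\eps Y)(z)$ reproduces exactly the two integral terms of the statement, with the correct sign, and the positivity $\mathsf{d}_{h,N}\geq 0$ is inherited verbatim from Lemma~\ref{lem:diss_mod_Ham_for_SV}.

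Substituting these two outer contributions back into the claimed left--hand side makes the two integral terms cancel, so the whole expression collapses to the single middle bracket $\big|H_{h,N}(z_2)-H_{h,N}(z_1)\big|=\big|H_{h,N}(\PhiSV^h(z_1))-H_{h,N}(z_1)\big|$. It then remains to show this is the exponentially small remainder: since $H_{h,N}$ is, by Lemma~\ref{lem:even_order_modified_hamiltonian_rev2}, the truncated modified Hamiltonian of $\PhiSV^h$, its truncated modified vector field $X_{H_{h,N}}$ conserves $H_{h,N}$ along its own flow, so with a step--size independent Lipschitz constant $\lambda$ for $H_{h,N}$ and Theorem~\ref{thm:bea_main_result} one bounds $\big|H_{h,N}(\PhiSV^h(z_1))-H_{h,N}(\exp(hX_{H_{h,N}})(z_1))\big|\leq\lambda\,\mathrm{dist}\big(\PhiSV^h(z_1),\exp(hX_{H_{h,N}})(z_1)\big)\leq h\lambda C\e^{-\gamma/h}$. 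The main obstacle is the domain bookkeeping: the intermediate points $z_1,z_2$ depend on $\eps$, so I would first fix an enlarged compact set $\tilde{\set U}\supset\set U$ containing $\exp(s\eps Y)(\set U)$ and $\PhiSV^h(\exp(s\eps Y)(\set U))$ for all $s\in[0,h/2]$, $h\leq h_0$, $\eps\leq\eps_0$, and apply both Lemma~\ref{lem:diss_mod_Ham_for_SV} and Theorem~\ref{thm:bea_main_result} on $\tilde{\set U}$. Because $\PhiSV^h$ and $H_{h,N}$ are independent of $\eps$, the constants $\lambda,C,\gamma$ and the final bound are automatically uniform in $\eps$, which yields the asserted $\eps$--independence of $h_0$ and of the exponential estimate.
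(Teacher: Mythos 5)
Your proof is correct and takes essentially the same route as the paper's: the identical three-term telescoping of $H_{h,N}$ across $\exp(\tfrac{h}{2}\eps Y)$, $\PhiSV^h$, $\exp(\tfrac{h}{2}\eps Y)$, with the outer brackets converted into the two stated integrals via $\LieD{Y}H_{h,N}=-\mathsf{d}_{h,N}$ from Lemma~\ref{lem:diss_mod_Ham_for_SV}, and the middle bracket bounded by the exponentially small backward-error estimate for the $\eps=0$ Störmer--Verlet step (the paper invokes Theorem~\ref{thm:energy_dissipation_rate_general_integrator} with $\eps=0$, which is the same argument you give directly from Theorem~\ref{thm:bea_main_result}). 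Your explicit enlargement of the compact set to cover the $\eps$-dependent intermediate points is a piece of bookkeeping the paper leaves implicit, and it is handled correctly.
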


\begin{proof}
	We have
	\[
		\begin{split}
			H_{h,N} & (\PhiThreeS^h(z)) - H_{h,N}(z)
			 = 
			 H_{h,N}\big(\!\exp(h\eps Y/2)(z)\big) - H_{h,N}(z)
			\\
			+ & H_{h,N}\big(\PhiSV^h\circ\exp(h\eps Y/2)(z)\big) - H_{h,N}\big(\!\exp(h\eps Y/2)(z)\big)
			\\
			+ & H_{h,N}\big(\!\exp(h\eps Y/2)\circ\PhiSV^h\circ\exp(h\eps Y/2)(z)\big) - 
				H_{h,N}\big(\!\PhiSV^h\circ\exp(h\eps Y/2)(z)\big)
		\end{split}
	\]
	From Theorem~\ref{thm:energy_dissipation_rate_general_integrator} with $\eps=0$
	it follows that
	\[
		H_{h,N}\big(\PhiSV^h\circ\exp(h\eps Y/2)(z)\big) - H_{h,N}\big(\!\exp(h\eps Y/2)(z)\big)
		\leq 
		h \lambda C \e^{-\gamma/h}
	\]
	whenever $h\leq h_0'$ for $\lambda,C,\gamma, h_0'$ independent of $\eps$ (since it is zero
	when we apply the theorem).
	Since $\exp(h \eps Y/2)$ is the exact $h$--flow of $\eps Y/2$ it follows
	from Lemma~\ref{lem:diss_mod_Ham_for_SV} that there exists $h_0''>0$
	(clearly independent of~$\eps$)
	such that $\mathsf{d}_{h,N}(z) = \mathsf{d}_q(E_{h,N}(z)p^\sharp,p^\sharp)\geq 0$.
	This gives
	the four remaining
	terms in the sum. Now chose $h_0 = \min(h_0',h_0'')$.
	%
	%
\end{proof}

Notice in particular that this result asserts, up to an exponentially small
rest term, that the modified Hamiltonian decreases monotonely for small
enough step sizes. The result is optimal, in the sense that
the same exponential term would remain if we take~$\eps=0$.

\subsection{Numerical Example: 2--D Pendulum} 
\label{sub:example_damped_pendulum}

\begin{figure} 
	\centering
	{\bf Phase diagrams for damped pendulum using Algorithms~\ref{alg:3S}--\ref{alg:RKS}} \\[2ex]
	\begin{tabular}{cc}
		$h=0.2$ & $h=0.3$ \\[-0.5ex]
		\includegraphics[width=0.4\textwidth]{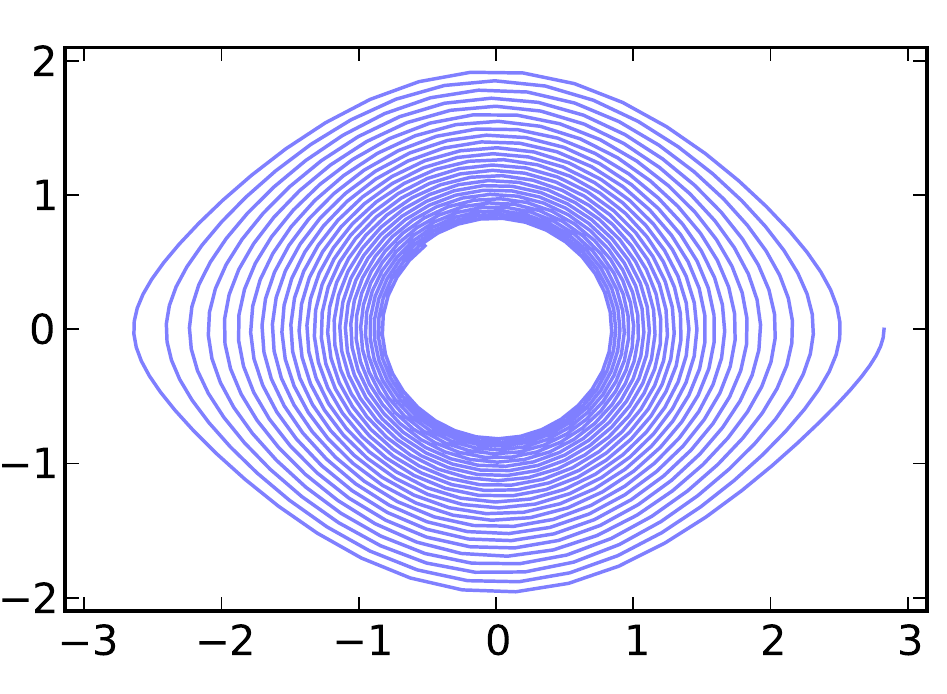} &
		\includegraphics[width=0.4\textwidth]{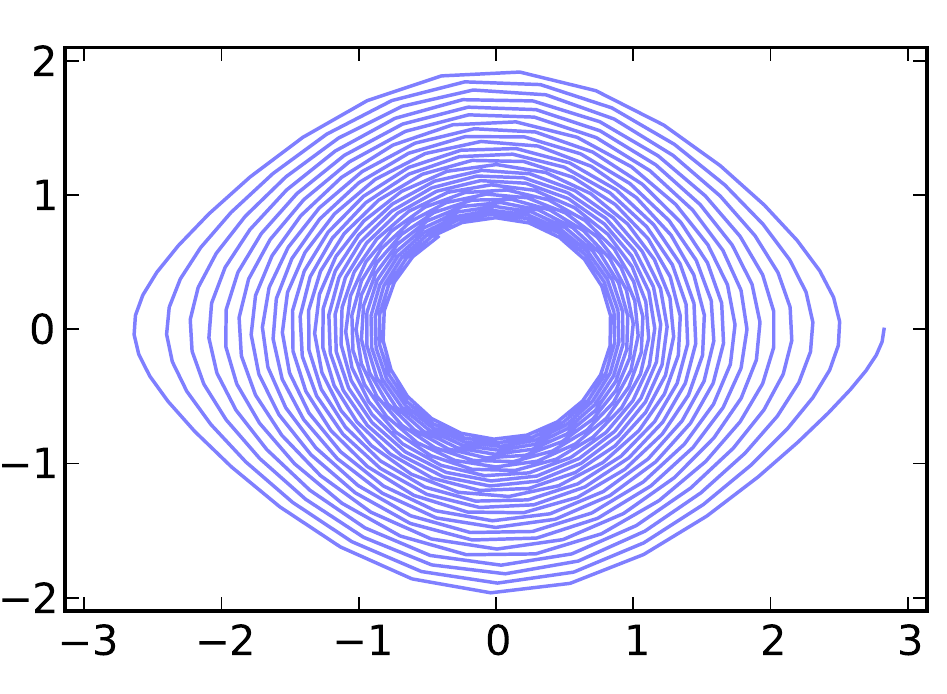} \\
		$h=0.4$ & $h=0.5$ \\[-0.5ex]
		\includegraphics[width=0.4\textwidth]{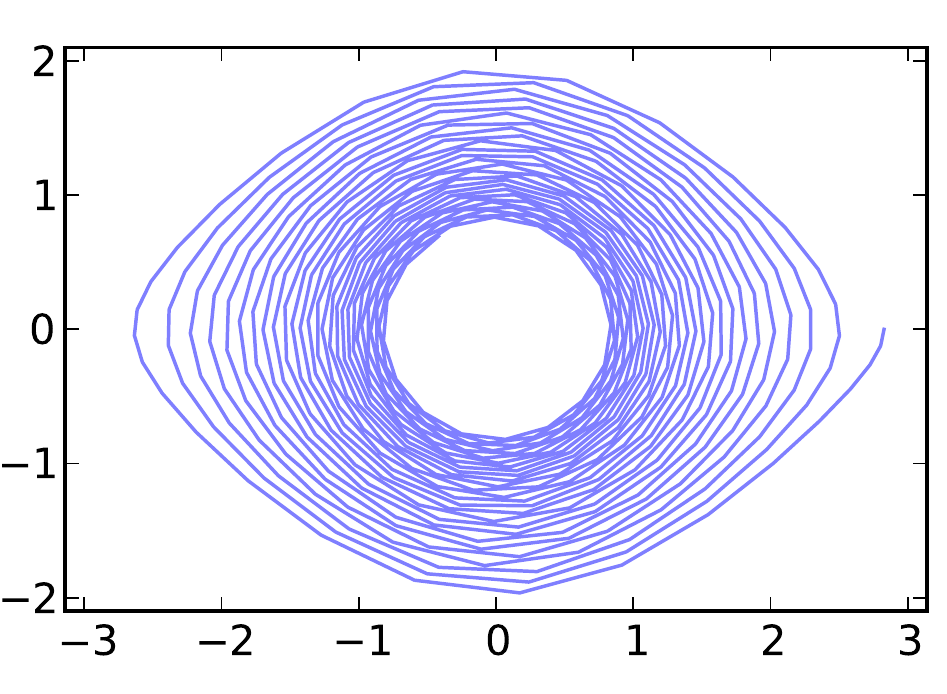} &
		\includegraphics[width=0.4\textwidth]{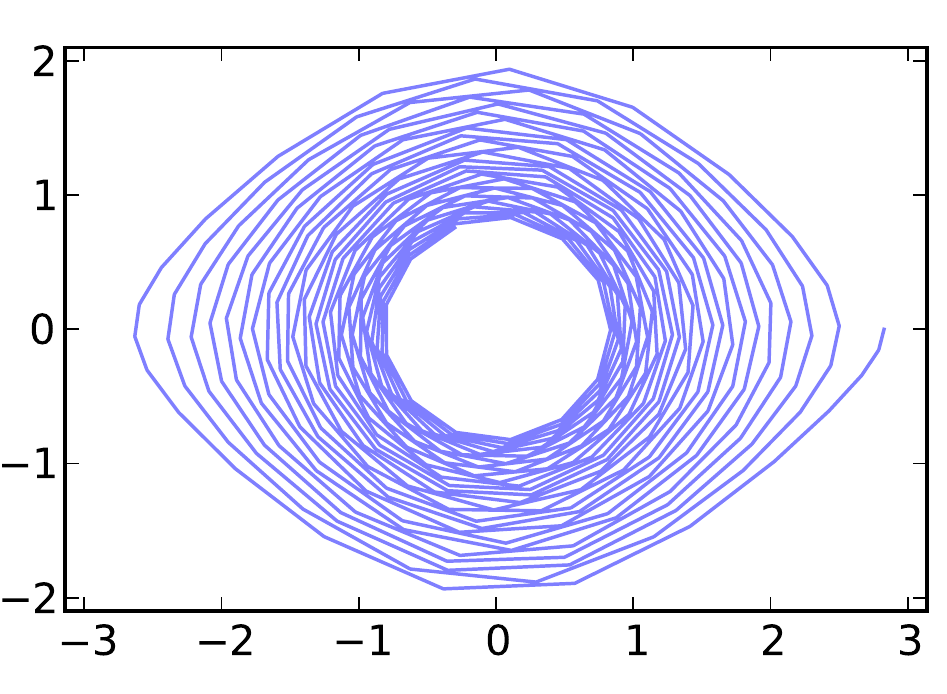}
	\end{tabular}
	\caption{Phase diagrams for the damped pendulum problems integrated
	with the suggested methods (they all overlap at this level of detail) 
	for various step sizes. 
	Notice that the amount of dissipation
	does not depend much on the step size
	(the size of the hole in the middle is roughly the same).}
	\label{fig:damped_pedulum_phase_sall}
\end{figure}

\begin{figure}
	\centering
	{\bf Phase diagrams for damped pendulum using Heun's method} \\[2ex]
	\begin{tabular}{cc}
		$h=0.2$ & $h=0.3$ \\[-0.5ex]
		\includegraphics[width=0.4\textwidth]{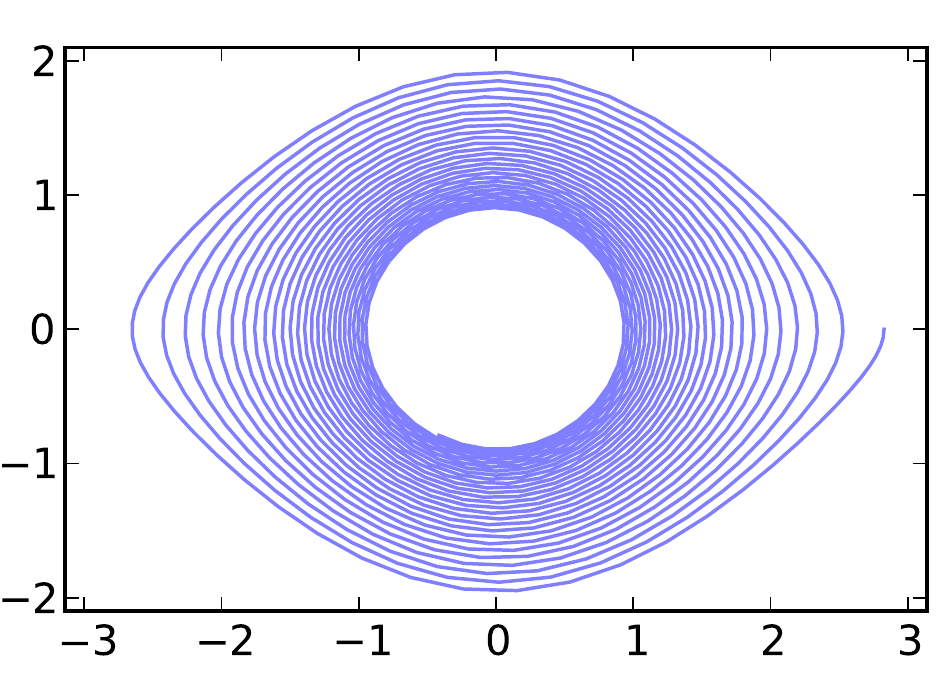} &
		\includegraphics[width=0.4\textwidth]{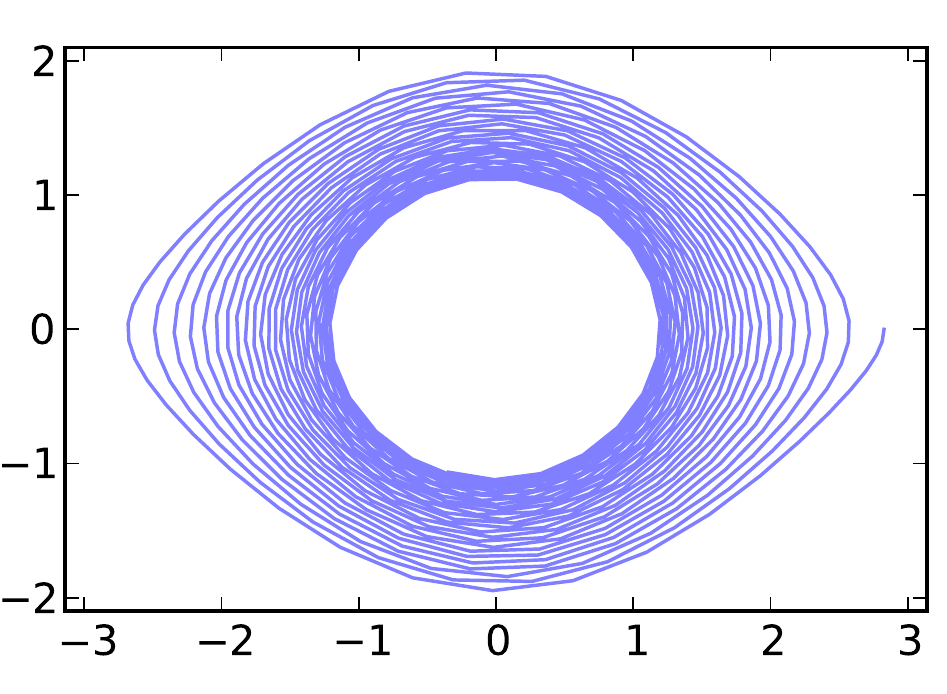} \\
		$h=0.4$ & $h=0.5$ \\[-0.5ex]
		\includegraphics[width=0.4\textwidth]{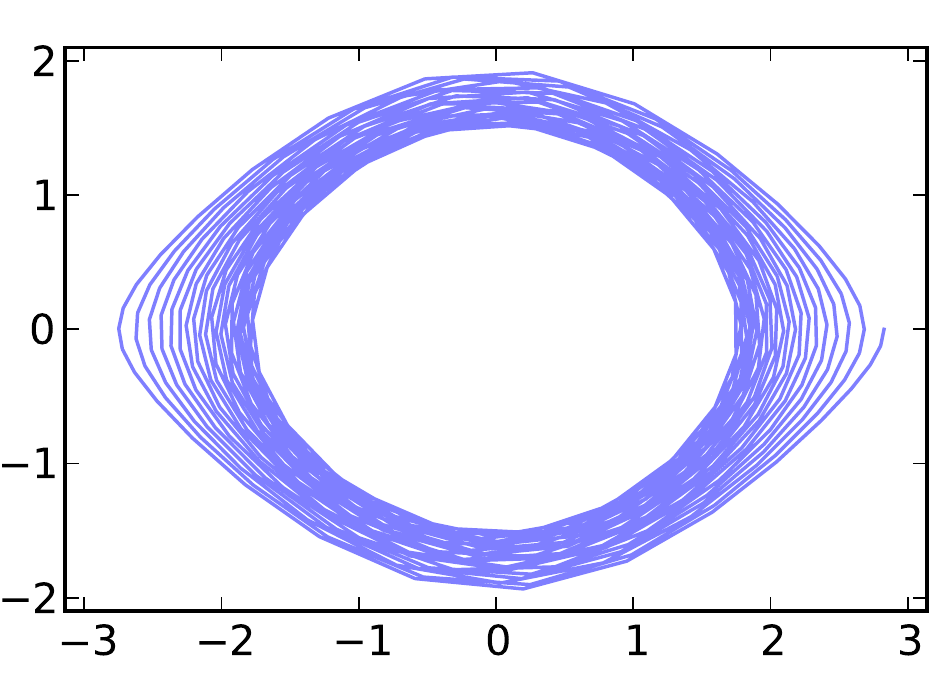} &
		\includegraphics[width=0.4\textwidth]{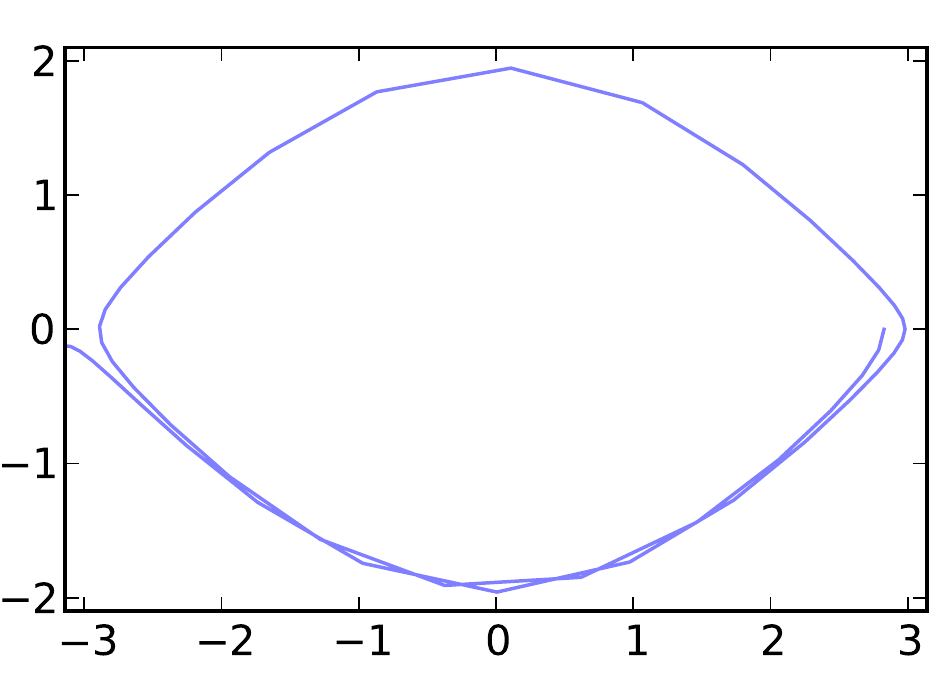}		
	\end{tabular}
	\caption{Phase diagrams for the damped pendulum problems integrated
	with Heun's method for various step sizes. Notice that the amount of dissipation
	(corresponding to the size of the hole in the middle)
	strongly depends on the step size.}
	\label{fig:damped_pedulum_phase_Heun}
\end{figure}

%
\begin{figure}
	\centering
	{\bf Evolution of energy error ($h=0.2$)} \\
	\hspace{-2ex}\raisebox{10ex}{$H_{\it err}$}
	\includegraphics[width=0.8\textwidth]{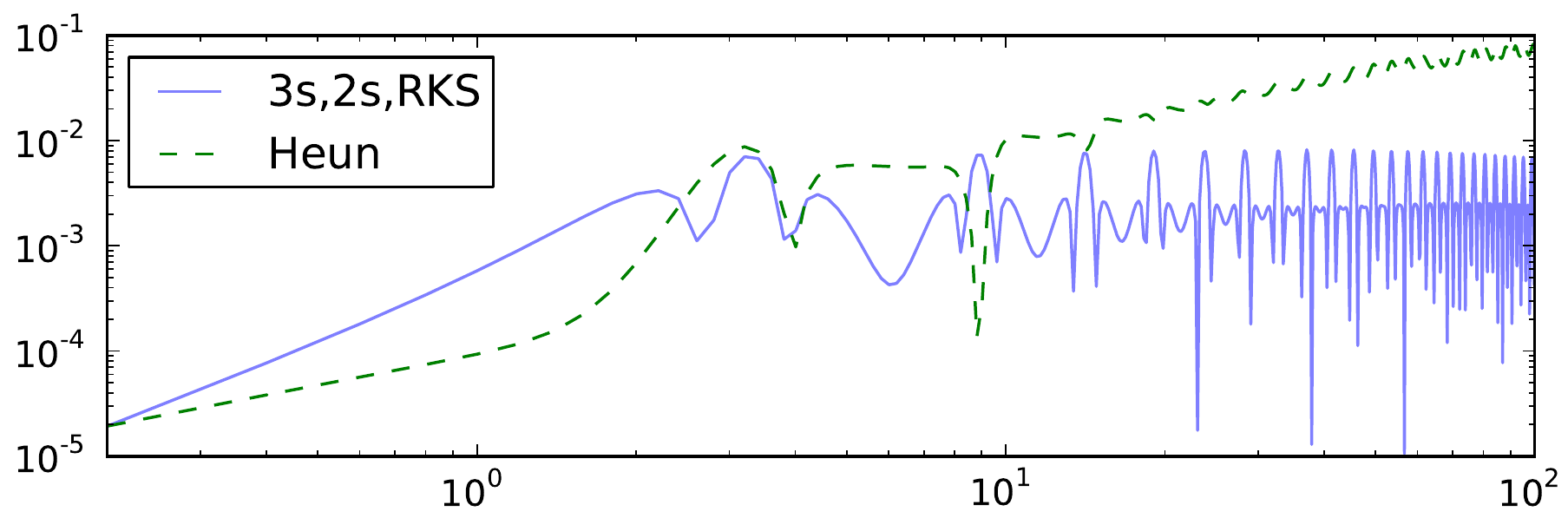}
	\\[-3ex] $t$ \\[2ex]
	{\bf Comparison of energy in between methods ($h=0.2$)} \\
	\hspace{-2ex}\raisebox{16ex}{$H_{\it err}$}
	\includegraphics[width=0.8\textwidth]{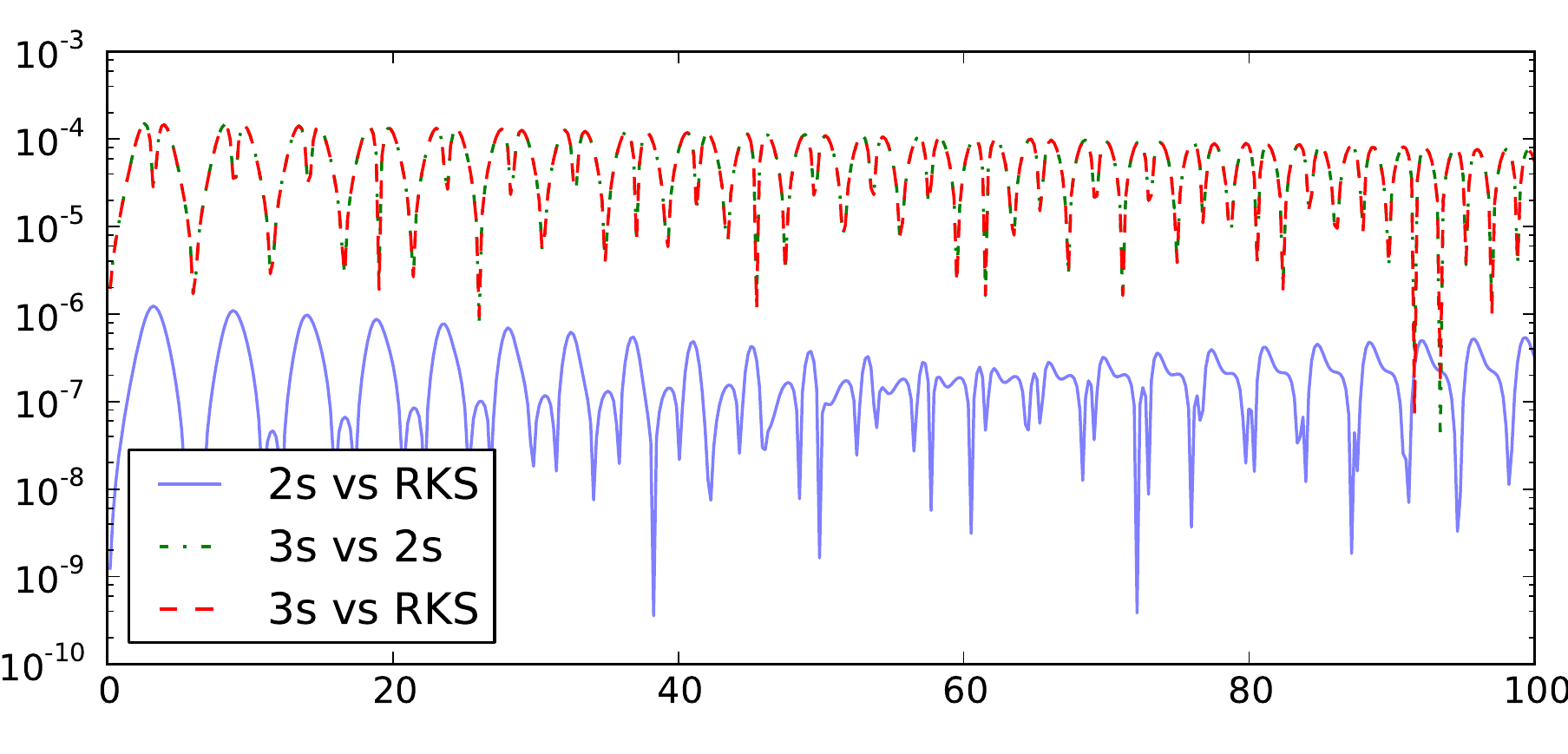}	
	\\[-3ex] $t$ \\
	\caption{
	(Upper) Evolution of energy for Algorithms~\ref{alg:3S}--\ref{alg:RKS},
	and for Heun's method. The error is bounded for
	the suggested methods, whereas it grows linearly for Heun's method.
	{(Lower)} Comparison of energy evolution in between 
	Algorithms~\ref{alg:3S}--\ref{alg:RKS}. The difference 
	is about a factor $10^2$ or $10^4$ smaller than
	the actual energy error.}
	\label{fig:damped_pendulum_energy_error}
\end{figure}
%
%

Let $\Q=\realset$, $\mathsf{g}_q(u,v)=\frac{1}{2}u v$, $\mathsf{d}=\mathsf{g}$
and $V(q) = 1-\cos(q)$. The system so obtained describes a damped non-linear pendulum
in the vertical plane, with unit mass and unit gravity
(see e.g.~\cite[Sect.~I.1]{HaLuWa2006} for details on this problem,
in the un-damped case).

The system is integrated with each of Algorithm~\ref{alg:3S}--\ref{alg:RKS},
as well as with Heun's explicit second order method.
(We also use Heun's method as the choice of Runge-Kutta method
for Algorithm~\ref{alg:RKS}.)
The following data was used: $q(0)=0.9\,\pi$, $p(0)=0$,
$\eps=10^{-2}$ and integration time interval~$[0,200]$,
for various step sizes $h\in[10^{-1},1]$.

Phase diagrams for various step sizes using Algorithms~\ref{alg:3S}--\ref{alg:RKS} are given
in Figure~\ref{fig:damped_pedulum_phase_sall}. 
There is virtually no difference between the suggested methods, i.e.,
the phase curves overlap. Notice that the size of the ``hole'' in the middle
of the phase diagrams is almost independent of
the step size. This verifies the result in Theorem~\ref{thm:energy_dissipation_rate_general_integrator},
that the dissipation rate
is asymptotically correct independent of the step size.
The corresponding phase diagrams for
Heun's method (sometimes also called Runge's method)%
\footnote{
The Butcher tableau is
	\begin{tabular}{c|cc}
		0 & & \\
		1 & 1 & \\
		\hline
		& $1/2$ & $1/2$
	\end{tabular}
}
are shown in Figure~\ref{fig:damped_pedulum_phase_Heun}. 
Notice that the dissipation rate in all the simulations
is to small, and depends heavily on the
step size. This reflects the fact that Heun's method
(as most explicit Runge-Kutta methods) assembles ``numerical''
energy.

A detailed plot of the energy error as a function of time, is
shown in the upper graph of Figure~\ref{fig:damped_pendulum_energy_error}. 
(The ``correct'' energy is computed by highly accurate numerical integration.)
In particular, notice
that the energy error for the suggested methods seems to
stay bounded, whereas it grows linearly for Heun's method.
Although this result is not fully explained by our backward error analysis,
it is related to the result obtained in Theorem~\ref{thm:energy_dissipation_rate_general_integrator}
and Theorem~\ref{thm:monotonicity_mod_Ham_alg3S},
that the dissipation rate for the modified Hamiltonian is asymptotically correct
in~$\eps$.

The lower graph in Figure~\ref{fig:damped_pendulum_energy_error} contains comparison of the energy
between the suggested methods, i.e.,
\[
	\abs{H\circ\PhiTwoS^h-H\circ\PhiRKS^h},\;
	\abs{H\circ\PhiThreeS^h-H\circ\PhiTwoS^h},\;
	\abs{H\circ\PhiThreeS^h-H\circ\PhiRKS^h}
\]
(this difference is too detailed
to recognize in the upper graph). Comparing with the values
in the upper graph, notice that the difference
in computed energy between the methods is much smaller
than the actual energy error, i.e., they produce practically
the same result. Also, notice that among the methods,
the difference in result between $\PhiTwoS^h$ and $\PhiRKS^h$ is indistinguishable
as compared to $\PhiThreeS^h$, i.e., $\PhiThreeS^h$ is the ``out-lier''
of the three.

\newcommand{\action}{\phi}

\section{Symmetry and Conservation of Momentum} 
\label{sec:conservation_of_momentum}

In this section we show that the methods suggested in
Section~\ref{sec:numerical_integration_schemes} preserve
invariance under a symmetry, and
conserve corresponding momentum maps.
For preliminaries on geometric mechanics, see~\cite{MaRa1999} 
or~\cite{AbMa1978}.

Let $\G$ be a Lie group acting on $\Q$ by $\action:\G\times\Q\to\Q$.
We write $\action_g := \action(g,\cdot)$.
Further, for $g\in\G$ we use the notation $g\cdot q = \action_g(q)$ and
$g\cdot z = \action_{*g}(z) := (\T^*_{g\cdot q}\action_{g^{-1}})(z)$ for the
lifted action.
The action is associated with a corresponding 
\emph{momentum map}~$\set{J}:\coTQ\to\cog$, where $\cog$ is the dual
of the Lie algebra $\g$ of $\G$.
Explicitly, the momentum map is defined by
\begin{equation}
	\pair{\set{J}(q,p),\xi} = \pair{p,\xi_\Q(q)}, \qquad \forall\; \xi\in\g ,
\end{equation}
where $\xi_\Q$ is the infinitesimal generator of the action on~$\Q$, i.e.,
$\xi_\Q(q) := \frac{\ud}{\ud t}|_{t=0}\big(\exp(t \xi)\cdot q\big)$.

Now, let $Y\in\Xcal(\coTQ)$ and assume that
its flow is \emph{$\G$--equivariant}, i.e., 
that it commutes with the action:
\begin{equation}\label{eq:G-invariance_of_flow}
	\exp(t Y)\circ\action_{*g} = \action_{*g}\circ\exp(t Y).
\end{equation}
The infinitesimal version of this relation is
$[Y,\xi_{\coTQ}]=0$ for all $\xi\in\g$, 
where $\xi_{\coTQ}(z) := \frac{\ud}{\ud t}|_{t=0}\big(\exp(t \xi)\cdot z\big)$
is the infinitesimal generator of the action on~$\coTQ$.
The vector field~$Y$ is then called \emph{$\G$--invariant}.
Since the action on $\coTQ$ is canonical, the vector field
$\xi_{\coTQ}$ is symplectic. In fact, it corresponds
to the Hamiltonian function $J(\xi) = \pair{\set{J}(\cdot),\xi}$,
i.e., $\xi_{\coTQ} = X_{J(\xi)}$ so it is exact symplectic (see~\cite[Sect.~11.2]{MaRa1999}).

Associated with the momentum map is:

\begin{itemize}
	\item The subgroup of $\G$--equivariant diffeomorphisms
	\[
		\DiffG(\coTQ) = \big\{ \varphi\in\Diff(\coTQ); \varphi\circ\action_{*g} = 
		\action_{*g}\circ\varphi, \; \forall\, g\in\G \big\}.
	\] 
	Since $\xi_{\coTQ}$ generates $\action_{*\exp(t\xi)}$, the corresponding
	subalgebra of vector fields is given by
	\[
		\XcalG(\coTQ) = \big\{ X\in\Xcal(\coTQ); 
			[X,\xi_{\coTQ}]=0,\; \forall\, \xi\in\g \big\}.
	\]
	\item The subgroup of momentum conserving diffeomorphisms
	\[
		\DiffJ(\coTQ) = \big\{ \varphi\in\Diff(\coTQ); 
			J(\xi)\circ\varphi = J(\xi),\; \forall\, \xi\in\g \big\},
	\]
	with corresponding subalgebra of vector fields given by
	\[
		\XcalJ(\coTQ) = \big\{ X\in\Xcal(\coTQ); 
			\LieD{X}J(\xi)=0,\; \forall\, \xi\in\g \big\}.
	\]
\end{itemize}


%
Notice that $\G$--equivariance does
not necessarily imply conservation of the momentum map,
not even in the Hamiltonian case.%
\footnote{For example, take $\Q=\G=\realset$, $H(q,p) = q$
and $J(\xi)(q,p)=p\cdot\xi$. Then $[X_H,\xi_{\coTQ}]=[X_H,X_{J(\xi)}]=
X_{\pois{H,J(\xi)}}=0$, but $\LieD{X_H}J(\xi)=\pois{H,J(\xi)}=\xi\neq 0$.}
Nor does conservation of
momentum imply $\G$--equivariance.
However, if a function $H\in\Fcal(\coTQ)$ is
$\G$--invariant, i.e., $\pois{H,J(\xi)}=0$ for all~$\xi\in\g$, 
then for the associated Hamiltonian vector field it holds that 
$X_H\in\XcalG(\coTQ)\cap\XcalJ(\coTQ)$ since
$[X_H,\xi_{\coTQ}]= X_{\pois{H,J(\xi)}}=0$ and $\LieD{X_H}J(\xi) = \pois{H,J(\xi)}=0$.

%

Let $\XcalGJ(\coTQ) := \XcalG(\coTQ)\cap\XcalJ(\coTQ)$.
We now give criterions for the vector fields involved
in system~\eqref{eq:perturbed_Hamiltonian_system_condense}
to be $\G$--invariant and/or momentum preserving.

\begin{proposition}\label{pro:G_invariance_of_triple}
	Let $\mathsf{g}$, $\mathsf{d}$ and $V$ be the tensors
	used in equation~\eqref{eq:perturbed_Hamiltonian_system},
	and $X_T$, $Y$ and $X_V$
	the corresponding vector fields 
	in equation~\eqref{eq:perturbed_Hamiltonian_system_condense}.
	%
	%
	Then,
	\[
		\begin{split}
			\LieD{\xi_\Q}\mathsf{g} = 0
			\quad\quad & \Longrightarrow
			\quad
			X_T \in \XcalGJ(\coTQ)
			\\
			\LieD{\xi_\Q}V = 0
			\quad\quad & \Longrightarrow
			\quad
			X_V \in \XcalGJ(\coTQ)
			\\
			\interior{\xi_\Q}\mathsf{d} = 0
			\quad\quad & \Longrightarrow
			\quad
			Y \in \XcalJ(\coTQ)
			\\
			\left.
			\begin{matrix}
				\LieD{\xi_\Q}\mathsf{g}\!\!\!\! &= 0 \\
				\LieD{\xi_\Q}\mathsf{d}\!\!\!\! &= 0
			\end{matrix}\;
			\right\}
			\quad & \Longrightarrow
			\quad
			Y \in \XcalG(\coTQ)
		\end{split}
	\]
	
\end{proposition}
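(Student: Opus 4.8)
The plan is to handle the two Hamiltonian fields $X_T,X_V$ and the dissipative field $Y$ separately, exploiting for the former the general fact recorded just above the proposition: a $\G$--invariant Hamiltonian $H$, i.e.\ one with $\pois{H,J(\xi)}=0$ for all $\xi\in\g$, automatically satisfies $X_H\in\XcalGJ(\coTQ)$. Since $\xi_{\coTQ}=X_{J(\xi)}$, the condition $\pois{H,J(\xi)}=0$ coincides with $\LieD{\xi_{\coTQ}}H=0$, i.e.\ invariance of $H$ under the lifted flow. Thus for the first two implications it suffices to show that the relevant Hamiltonians are invariant under the cotangent--lifted action. For $V$, viewed on $\coTQ$ as $V\circ\pi$ with $\pi:\coTQ\to\Q$ the bundle projection, this is immediate: the lift covers the base action, so $(V\circ\pi)\circ\action_{*g}=(V\circ\action_g)\circ\pi$, and differentiating at $g=\exp(t\xi)$ gives $\LieD{\xi_{\coTQ}}(V\circ\pi)=(\LieD{\xi_\Q}V)\circ\pi=0$, whence $X_V\in\XcalGJ(\coTQ)$.

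For $X_T$ the content is the classical Noether observation that isometries preserve kinetic energy. Writing $\action_{*g}(q,p)=(g\cdot q,\tilde p)$ with $\tilde p=(\T^*_{g\cdot q}\action_{g^{-1}})p$, I would first check, using only $\T_{g\cdot q}\action_{g^{-1}}\circ\T_q\action_g=\Id$ together with the isometry identity $\mathsf{g}_{g\cdot q}(\T_q\action_g\,u,\T_q\action_g\,v)=\mathsf{g}_q(u,v)$, that the sharp of $\tilde p$ at $g\cdot q$ equals $\T_q\action_g\cdot p^\sharp$. Substituting this into $T$ yields $T(\action_{*g}(q,p))=\tfrac12\mathsf{g}_{g\cdot q}(\T_q\action_g\,p^\sharp,\T_q\action_g\,p^\sharp)=\tfrac12\mathsf{g}_q(p^\sharp,p^\sharp)=T(q,p)$. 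The hypothesis $\LieD{\xi_\Q}\mathsf{g}=0$ says exactly that each $\action_{\exp(t\xi)}$ is an isometry, so differentiating this invariance at $t=0$ gives $\pois{T,J(\xi)}=0$, and the general fact delivers $X_T\in\XcalGJ(\coTQ)$.

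The third implication is a direct computation. Because $Y$ has vanishing $q$--component, the $q$--derivative of $J(\xi)(q,p)=\pair{p,\xi_\Q(q)}$ drops out; as $J(\xi)$ is linear in $p$ and the $p$--component of $Y$ equals $-\mathsf{d}_q(p^\sharp,\cdot)$, we obtain
\[
	\LieD{Y}J(\xi)(q,p)=\pair{-\mathsf{d}_q(p^\sharp,\cdot),\xi_\Q(q)}=-\mathsf{d}_q(\xi_\Q(q),p^\sharp)=-(\interior{\xi_\Q}\mathsf{d})_q(p^\sharp),
\]
using symmetry of $\mathsf{d}$. Hence $\interior{\xi_\Q}\mathsf{d}=0$ forces $\LieD{Y}J(\xi)=0$, i.e.\ $Y\in\XcalJ(\coTQ)$; note this uses only the vanishing of the contraction, not full invariance of $\mathsf{d}$.

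For the last implication I would argue by naturality rather than a bare Lie--bracket computation. The push--forward calculation~\eqref{eq:pushforward_calculation} shows that the dissipative field is assembled tensorially from $(\mathsf{g},\mathsf{d})$: a cotangent lift carries the field for $(\mathsf{g},\mathsf{d})$ to the field for the correspondingly transported pair. Applying this naturality to the diffeomorphisms $\action_{\exp(t\xi)}$, the two hypotheses $\LieD{\xi_\Q}\mathsf{g}=0$ and $\LieD{\xi_\Q}\mathsf{d}=0$ say precisely that each $\action_{\exp(t\xi)}$ preserves both $\mathsf{g}$ (entering through the sharp operator) and $\mathsf{d}$, so that $(\action_{*\exp(t\xi)})_*Y=Y$ for all $t$. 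This is the equivariance relation~\eqref{eq:G-invariance_of_flow} for the flow of $Y$, whose infinitesimal form is $[Y,\xi_{\coTQ}]=0$; thus $Y\in\XcalG(\coTQ)$. The main obstacle is the bookkeeping around the sharp operator, which is exactly what forces the asymmetry between the last two implications: since $p^\sharp$ depends on $\mathsf{g}$, equivariance of $Y$ requires invariance of the metric in addition to $\mathsf{d}$, whereas the momentum computation never differentiates $p^\sharp$ and so is insensitive to $\mathsf{g}$, needing only $\interior{\xi_\Q}\mathsf{d}=0$.
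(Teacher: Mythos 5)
Your proposal is correct and follows essentially the same route as the paper: the Hamiltonian fields $X_T,X_V$ are handled by reducing to invariance of $T$ and $V$ under the lifted action (which the paper cites to the classical literature and you work out explicitly), the third implication is the same direct computation $\LieD{Y}J(\xi)=-(\interior{\xi_\Q}\mathsf{d})_q(p^\sharp)$, and the last implication rests on the tensorial naturality of $Y$ established in the push-forward calculation~\eqref{eq:pushforward_calculation}. The only cosmetic difference is that for the last implication you integrate the hypotheses to flow-level invariance $(\action_{*\exp(t\xi)})_*Y=Y$ and then differentiate, whereas the paper differentiates the push-forward identity directly and applies the product rule to split $\LieD{\xi_{\coTQ}}Y$ into terms controlled by $\LieD{\xi_\Q}\mathsf{d}$ and $\LieD{\xi_\Q}\mathsf{g}$.
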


\begin{proof}
	The result is already well known in the Hamiltonian case of $X_T$ and $X_V$.
	Indeed, $\LieD{\xi_\Q}\mathsf{g}=0$ implies that
	$\G$ acts on $\coTQ$ by isometries.
	In turn, this implies that $T$ is $\G$--invariant.
	Further, $\LieD{\xi_\Q}V = 0$ implies that
	$V$ is $\G$--invariant. In turn,
	$X_T$ and $X_V$ are $\G$--invariant
	and $\pois{T,J(\xi)}=\pois{V,J(\xi)}=0$.
	(See~\cite{Sm1970a} or~\cite[Sect.~4.5]{AbMa1978} or~\cite[Chap.~3]{Ma1992} for details).
	
	The result remains to be shown for the non-Hamiltonian 
	vector field~$Y$. First the third implication.
	Since $Y(q,p)=(0,-\mathsf{d}_q(p^\sharp,\cdot))$ and
	$J(\xi)(q,p) = \pair{p,\xi_\Q(q)}$, it holds
	that $\LieD{Y}J(\xi)(q,p) = -\mathsf{d}_q(p^\sharp,\xi_\Q(q))$.
	Now, $\mathsf{d}_q(p^\sharp,\xi_\Q(q)) = 
	(\interior{\xi_\Q}\mathsf{d})_q(p^\sharp) = 0$ by the presumption.
	Thus, $\LieD{Y}J(\xi)=0$, so $Y\in\XcalJ(\coTQ)$.
	
	Next, the last implication.
	Let $\mathsf{f}$ denote the
	bi-linear form $\coTQ\times\coTQ\to\realset$
	given by $\mathsf{f}_q(p,r):= \mathsf{g}_q(p^\sharp,r^\sharp)=\pair{p,r^\sharp}$.
	If $\LieD{\xi_\Q}\mathsf{g}=0$ then $\LieD{\xi_\Q}\mathsf{f}=0$ (see~\cite[Sect.~3.1]{Ma1992}).
	A calculation equivalent to the one in 
	equation~\eqref{eq:pushforward_calculation} yields
	\[
		(\action_{*g})_*Y(q,p) = 
		\Big(0,- (\action_{*g}\mathsf{d})_q\big((\action_{*g}\mathsf{f})_q(p,\cdot),\cdot\big)\Big).
	\]
	Chose now the path $g(t) = t\xi$, replace $g$ for $g(t)$, and take the time derivative 
	$\frac{\ud}{\ud t}|_{t=0}$. The left hand side then becomes
	the Lie derivative of $Y$ in the direction $\xi_{\coTQ}$.
	Using the product rule on the right hand side we get
	\[
		\LieD{\xi_{\coTQ}}Y(q,p) = 
		\Big(0,-(\LieD{\xi_\Q}\mathsf{d})_q\big(\mathsf{f}_q(p,\cdot),\cdot\big)
		- \mathsf{d}_q\big((\LieD{\xi_\Q}\mathsf{f})_q(p,\cdot),\cdot\big)\Big) .
	\]
	By the presumption the right hand side vanishes, so $[\xi_{\coTQ},Y]=0$. Thus,
	$Y\in\XcalG(\coTQ)$.
\end{proof}

We now have the following basic result, which concerns
conservation of momentum and $\G$--invariance for
the suggested methods.

\begin{proposition}\label{pro:conservation_of_momentum_for_methods}
	Let $\Gsub$ denote any of $\DiffG(\coTQ)$, $\DiffJ(\coTQ)$ or 
	$\DiffGJ{\coTQ}$, and let $\gsub$ denote the corresponding
	subalgebra of vector fields. Then $X_T,Y,X_V\in\gsub$
	implies that
	$\PhiThreeS^h$, $\PhiTwoS^h$ and
	$\PhiRKS^h$, defined by 
	Algorithms~\ref{alg:3S}--\ref{alg:RKS}, 
	belong to $\Gsub$.
\end{proposition}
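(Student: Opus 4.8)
The plan is to reduce the statement to three ingredients: (i) each of the candidate sets $\Gsub$ is closed under composition, (ii) the exact flow $\exp(tX)$ of any $X\in\gsub$ lies in $\Gsub$, and (iii) the Runge--Kutta factor $\Psi^h_{X_V+\eps Y}$ lies in $\Gsub$. Since Algorithms~\ref{alg:3S}--\ref{alg:RKS} are each a finite composition of factors of exactly these two kinds, membership of every factor together with closure under composition yields the conclusion at once. I would dispatch (i) first, as it is elementary: that $\DiffG(\coTQ)$, $\DiffJ(\coTQ)$ and their intersection $\DiffGJ(\coTQ)$ are subgroups follows because equivariance is preserved under composition and because $J(\xi)\circ(\varphi_1\circ\varphi_2)=(J(\xi)\circ\varphi_1)\circ\varphi_2=J(\xi)$ when both factors conserve momentum.

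Next I would treat the exponential factors via the standard correspondence between the subalgebra and subgroup conditions. For the equivariant case, $X\in\XcalG(\coTQ)$ means $[X,\xi_{\coTQ}]=0$, so the flows of $X$ and of $\xi_{\coTQ}$ commute; since $\xi_{\coTQ}$ generates $\action_{*\exp(t\xi)}$, this is precisely $\exp(tX)\circ\action_{*g}=\action_{*g}\circ\exp(tX)$, i.e.\ $\exp(tX)\in\DiffG(\coTQ)$. For the momentum case, $X\in\XcalJ(\coTQ)$ means $\LieD{X}J(\xi)=0$, so $J(\xi)$ is constant along the flow and $\exp(tX)\in\DiffJ(\coTQ)$; the intersection case is the conjunction. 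Because each subalgebra is cut out by linear conditions it is a linear subspace, so $X_V,Y\in\gsub$ gives $X_V+\eps Y\in\gsub$, which covers the factor $\exp(h(X_V+\eps Y))$ in $\PhiTwoS^h$ as well as the factors $\exp(\tfrac{h}{2}\eps Y)$, $\exp(\tfrac{h}{2}X_T)$, $\exp(hX_V)$. Thus every factor of $\PhiThreeS^h$ and $\PhiTwoS^h$ lies in $\Gsub$.

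The one genuinely nontrivial point, and where I expect the real work, is the Runge--Kutta factor $\Psi^h_{X_V+\eps Y}$ in $\PhiRKS^h$. Here I would exploit the feature already used in the proof of Proposition~\ref{pro:coordinate_invariance}: the field $W:=X_V+\eps Y$ has trivial $q$--dynamics, so every stage of the step lives on the single cotangent fibre $\T^*_q\Q$, which is a genuine vector space, and the step is an ordinary Runge--Kutta step for the fibre field. On this fibre $J(\xi)(p)=\pair{p,\xi_\Q(q)}$ is a \emph{linear} functional of $p$ and is a first integral of $W$, since $\LieD{W}J(\xi)=0$ follows from $X_V,Y\in\XcalJ(\coTQ)$; as every Runge--Kutta method conserves linear first integrals, $J(\xi)\circ\Psi^h_W=J(\xi)$, giving $\Psi^h_W\in\DiffJ(\coTQ)$. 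For equivariance I would use that the cotangent lift $\action_{*g}$ restricts to a \emph{linear} isomorphism $\T^*_q\Q\to\T^*_{g\cdot q}\Q$; the equivariance $(\action_{*g})_*W=W$ carries the fibre field over $q$ to the one over $g\cdot q$, and since Runge--Kutta methods commute with linear changes of variables the fibrewise steps are conjugate by $\action_{*g}$, yielding $\Psi^h_W\circ\action_{*g}=\action_{*g}\circ\Psi^h_W$, i.e.\ $\Psi^h_W\in\DiffG(\coTQ)$.

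The main obstacle is thus localised entirely in this last factor: the exponential factors and the final composition are routine once the flow/subalgebra correspondence is recorded, whereas for $\Psi^h_W$ one must first restrict the step to a fibre and only then invoke the two classical Runge--Kutta invariance properties. The subtlety to handle with care is that $J(\xi)$ and $\action_{*g}$ are not linear on all of $\coTQ$ but only on, and between, the fibres on which the Runge--Kutta step actually operates, so the phrases ``linear first integral'' and ``linear change of variables'' must be read fibrewise.
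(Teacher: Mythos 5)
Your proposal is correct and takes essentially the same route as the paper: the exponential factors are handled by closure of $\Gsub$ under composition together with the flow/subalgebra correspondence, and the Runge--Kutta factor $\Psi^h_{X_V+\eps Y}$ is treated fibrewise on $\T^*_q\Q$, using conservation of linear invariants for membership in $\DiffJ(\coTQ)$ and invariance of Runge--Kutta methods under linear changes of variables for membership in $\DiffG(\coTQ)$. The only cosmetic difference is that the paper derives the equivariance of the Runge--Kutta factor by citing Proposition~\ref{pro:coordinate_invariance} (the lifted action $\action_{*g}$ being a co-tangent lifted point transformation), whereas you inline that same fibrewise argument directly.
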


\begin{proof}
	The result is clear for Algorithm~\ref{alg:3S}--\ref{alg:2S}
	since $\Gsub$ is closed under composition.
	Moreover, for $\PhiRKS^h=
	\exp(h X_T/2)\circ\Psi^h_{X_V + \eps Y}\circ\exp(h X_T/2)$, it holds that
	the $\exp(h X_T/2)\in\Gsub$ if $X_T\in\gsub$.
	The vector field $Y$ defines a linear system on $\T^*_q\Q$.
	Since the momentum map is linear in~$p$, and since all Runge-Kutta
	methods conserve linear invariants, it holds that $\Psi^h_{X_V + \eps Y} \in \DiffJ(\coTQ)$
	if $X_V,Y\in\XcalJ(\coTQ)$. Furthermore, 
	we notice that the lifted action~$\action_{*g}$
	is exactly the co-tangent lift of~$\action_{g^{-1}}$.
	Thus, by Proposition~\ref{pro:coordinate_invariance}
	$\PhiRKS^h$ is invariant under
	the change of coordinates~$(q,p) = \action_{*g}(s,r)$.
	That is, \[ 
		\exp(h (\action_{*g})_*X_T/2)\circ\Psi^h_{(\action_{*g})_*(X_V + 
		\eps Y)}\circ\exp(h (\action_{*g})_*X_T/2)\circ\action_{*g} = \action_{*g}\circ\PhiRKS^h .
	\]
	Now, if $X_T,Y,X_V\in\XcalG(\coTQ)$, then $(\action_{*g})_*X_T = X_T$,
	$(\action_{*g})_*Y = Y$ and $(\action_{*g})_*X_V = X_V$, so
	$\PhiRKS^h\circ \action_{*g} = \action_{*g}\circ\PhiRKS^h$,
	i.e., $\PhiRKS^h\in\DiffG(\coTQ)$.
	%
\end{proof}

In particular, if $\mathsf{g}$, $\mathsf{d}$ and $V$
fulfill the requirements of Proposition~\ref{pro:G_invariance_of_triple},
then it follows by Proposition~\ref{pro:conservation_of_momentum_for_methods} 
that Algorithms~\ref{alg:3S}--\ref{alg:RKS} are
both $\G$--equivariant and momentum conserving.
It is important to point out that this result does not
hold for general exact symplectic integrators,
not even if the integrator
is momentum preserving when $\eps=0$.
For example the alternative splitting method
\begin{equation}
	\Phi_{\textrm{\footnotesize RKS-B}}^h=\exp(h X_V/2)\circ\Psi_{X_T+\eps Y}^h\circ\exp(h X_V/2)	
\end{equation}
reduces to Störmer-Verlet when $\eps=0$,
but does not conserve momentum.


\section{Near Conservation of Relative Equilibria} 
\label{sec:near_conservation_of_relative_equilibria}

In this section we investigate how the methods suggested in
Section~\ref{sec:numerical_integration_schemes} preserve relative
equilibria. 
For background on geometric reduction theory see
e.g.~\cite[Chap.~3]{Ma1992}, \cite[Chap.~4]{AbMa1978}, or~\cite[Chap.~1--2]{MaMiOrRa2007}.

Let $\G$ be a Lie group acting on $\Q$
as in the previous section. We assume that $\G$ acts \emph{freely} and \emph{properly}
on $\G$ (cf.~\cite[Chap.~3]{Ma1992}), which roughly speaking means that
the action is non-singular.

\begin{definition}\label{def:rel_equilibria}
	Let $X\in\Xcal(\coTQ)$. A solution curve $\gamma(t)$ to
	$\dot z = X(z)$ is called a \emph{relative equilibrium}
	if there exists a $\xi\in\g$ such that
	$\gamma(t) = \exp(t\xi)\cdot \gamma(0)$.
\end{definition}

The study of relative equilibria of Hamiltonian systems on
a co-tangent bundle~$\coTQ$ is closely related to
the theory of \emph{co-tangent bundle reduction}.
%
The theme 
is to ``quotient away'' the part of the dynamics generated
by the symmetry group~$\G$, and thus, for each $\mu\in\cog$,
obtain a reduced phase space given by~$\P_\mu=\mathsf{J}^{-1}(\mu)/G_\mu$,
where $\G_\mu$ is the subgroup of $\G$ that leaves $\mu$
invariant under the co-adjoint action, i.e.,
$\G_\mu = \{ g\in\G ; \Ad_{g^{-1}}^* \mu = \mu\}$.

\begin{remark}\label{rem:abelian_reduction}
	If $\G$ is an Abelian Lie group (all elements commute),
	it holds that $\G_\mu = \G$, so
	$\P_\mu = \mathsf{J}^{-1}(\mu)/\G$, and
	$\dim \P_\mu = 2\dim \Q - 2\dim \G$.
	In this case, the reduced phase space $\P_\mu$ is isomorphic to
	$\T^* (\Q/\G)$, equipped with 
	a non-canonical symplectic 
	structure $\omega_\mu$, depending
	on the momentum map value~$\mu$. It holds that
	$\omega_0$ is the canonical symplectic structure
	on $\T^*(\Q/\G)$. In general, $\omega_\mu = \omega_0 + \beta_\mu$,
	for a 2--form~$\beta_\mu$ usually called \emph{magnetic term}.
	In classical mechanical examples, where $\G$ typically is
	a rotation group, $\beta_\mu$ corresponds
	to centrifugal forces due to the rotation.
	If $\dim\G = \dim\Q$ we have~$n$
	first integrals in involutions, i.e., the system
	is Arnold-Liouville integrable.
	The modern notion of co-tangent reduction in the Abelian case
	was developed by Smale~\cite{Sm1970a}. However, its roots goes back
	to Lagrange, Poisson, Jacobi, and Routh.\footnote{
	A historical account of the theory of co-tangent bundle reduction is presented in
	the introduction of the monograph by Marsden~et.~al.~\cite{MaMiOrRa2007}.}
\end{remark}

%
%
%
%

\newcommand{\incl}{\iota}
\newcommand{\dropped}[1]{\overline{#1}}
Let $\incl_\mu:\set J^{-1}(\mu)\to\coTQ$ be the natural inclusion,
and $\pi_\mu:\set J^{-1}(\mu)\to\P_\mu$ the
projection~$\pi_\mu(z) = [z]$, where 
$[z]\in\P_\mu=\set J^{-1}(\mu)/\G_\mu$ is the equivalence class of~$z\in\set J^{-1}(\mu)$.
Given $X\in\XcalGJ(\coTQ)$ and initial data on $\set J^{-1}(\mu)$, 
the flow $\exp(tX)$ 
restricts to the invariant momentum manifold $\set J^{-1}(\mu)$.
Further, due to the symmetry, it drops
to a flow $\exp(t \dropped X)$ on $\Diff(\P_\mu)$
that fulfills $\pi_\mu\circ\exp(t X) = \exp(t\dropped X)\circ\pi_\mu$,
where $\dropped X$ is the vector field on $\P_\mu$ defined
by $\T\pi_\mu\circ X = \dropped X \circ \pi_\mu$.
Notice that for these constructions to make sense,
it is essential that~$X$ is both $\G$--invariant
and momentum conserving.
If $\gamma(t)$ is a relative equilibrium,
then $\pi_\mu\circ\gamma(t) := [\gamma(t)]=[\gamma(0)]$.
Indeed,~$\gamma(t)$
is a relative equilibrium solution of the dynamical system
$\dot z = X(z)$ if and only if $[\gamma(0)]$ is 
an equilibrium of $\dot{[z]}=\dropped X([z])$, i.e.,~$\dropped X([\gamma(0)])=0$.

Note that 
if $\gamma(t)=(q(t),p(t))$ is a relative equilibrium
for system~\eqref{eq:perturbed_Hamiltonian_system_condense},
then it must hold that $\mathsf{d}_{q(t)}(p(t)^\sharp,p(t)^\sharp)=0$ for all~$t$,
since the motion~$\gamma(t)$ conserves the Hamiltonian.

Now, fix some $\eps>0$ and let $Z=X_T+X_V+\eps Y$ be the vector field for the 
system~\eqref{eq:perturbed_Hamiltonian_system_condense},
and assume that $\mathsf{g},\mathsf{d},V$ fulfill all the requirements
in Proposition~\ref{pro:G_invariance_of_triple}, so that~$Z\in\XcalGJ(\coTQ)$.
Further, assume that system~\eqref{eq:perturbed_Hamiltonian_system_condense}
has an asymptotically stable relative equilibrium $\gamma(t)$, i.e.,
the critical point $[\gamma(0)]$ of $\dropped Z$ is asymptotically stable.
In general, an integrator~$\Phi^h$ for~$Z$ \emph{does not} have a nearby
relative equilibrium, not even for very small step sizes. The reason for this
is that $Z$ is not \emph{structurally stable} with respect to arbitrary perturbations
in $\Xcal(\coTQ)$. That is, for a small 
perturbation~$\epsilon Z^\Delta\in\Xcal(\coTQ)$ the vector field $Z+\epsilon Z^\Delta$
does not, in general, possess a perturbed relative equilibrium.
However, the stationary point $[\gamma(0)]$ of the 
reduced vector field $\dropped Z$ \emph{is} structurally
stable with respect to perturbations in $\Xcal(\P_\mu)$.
That is, if $\epsilon\dropped Z^\Delta\in\Xcal(\P_\mu)$, then
for small enough~$\epsilon$, the perturbed reduced vector
field~$\dropped Z + \epsilon \dropped Z^\Delta$ has a nearby stationary
point. As a consequence, the relative equilibrium $\gamma(t)$ of~$Z$
is structurally stable with respect to perturbations $\epsilon Z^\Delta \in \XcalGJ(\coTQ)$,
since then the vector field $Z + \epsilon Z^\Delta$ 
drops to $\dropped Z + \epsilon \dropped Z^\Delta$
under symmetry reduction. The corresponding argument also works
on the level of flow maps. Indeed, let $\Phi^h\in\DiffGJ(\coTQ)$ be a
$\G$--equivariant and momentum preserving integrator, approximating
the exact flow $\exp(h Z)$. By the quotient map, the integrator
drops to a map $\dropped \Phi^h\in\Diff(\P_\mu)$, which approximates
$\exp(h \dropped Z)\in\Diff(\P_\mu)$, and since the fix point
$[\gamma(0)]$ of $\exp(h \dropped Z)$ is hyperbolic, the perturbed map
$\dropped \Phi^h$ has a nearby fix point~$[\gamma_h(0)]$ for~$h$ small enough.
In summary, we have the following:
\begin{proposition}\label{pro:conservation_of_hyperbolic_relative_equilibria}
	Let $\mathsf{g},\mathsf{d},V$
	fulfill all the requirements in 
	Proposition~\ref{pro:G_invariance_of_triple}, and let
	$\Phi^h$ be a $\G$--equivariant and momentum preserving
	integrator for system~\ref{eq:perturbed_Hamiltonian_system_condense}.
	Assume that $\gamma(t)$ is an asymptotically stable
	relative equilibrium.
	Then $\Phi^h$ preserves a nearby relative equilibrium
	for small enough step sizes.
\end{proposition}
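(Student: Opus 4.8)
The plan is to transport the whole question to the reduced phase space $\P_\mu$, where the relative equilibrium becomes an ordinary hyperbolic fixed point, and then to apply the implicit function theorem. All the reduction machinery is already assembled in the discussion preceding the statement, so the proof is essentially an assembly of those facts together with one near-identity argument.

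First I would drop the integrator to $\P_\mu$. Since $\Phi^h$ is momentum preserving, $\set J\circ\Phi^h=\set J$, so $\Phi^h$ maps $\set J^{-1}(\mu)$ into itself; since it is $\G$--equivariant it commutes in particular with the residual $\G_\mu$--action, which also preserves $\set J^{-1}(\mu)$ by equivariance of $\set J$. Hence $\Phi^h$ descends to a diffeomorphism $\dropped \Phi^h\in\Diff(\P_\mu)$ satisfying $\pi_\mu\circ\Phi^h=\dropped \Phi^h\circ\pi_\mu$, exactly as the exact flow drops to $\exp(h\dropped Z)$; freeness and properness of the action, assumed at the start of the section, guarantee that $\P_\mu$ is a manifold and $\pi_\mu$ a submersion. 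The key reformulation is then that a relative equilibrium of $\Phi^h$ through a point $z_0\in\set J^{-1}(\mu)$ --- a point with $\Phi^h(z_0)=g\cdot z_0$ for some $g\in\G_\mu$, whose $\G_\mu$--orbit is $\Phi^h$--invariant --- exists precisely when $[z_0]:=\pi_\mu(z_0)$ is a fixed point of $\dropped \Phi^h$. Thus the proposition reduces to exhibiting a fixed point of $\dropped \Phi^h$ near $[\gamma(0)]$ for small $h$.

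Next I would run the implicit function theorem, the one genuine analytic step. The obstruction is that $\dropped \Phi^h\to\Id$ as $h\to 0$, so the naive equation $\dropped \Phi^h([z])-[z]=0$ is degenerate at $h=0$; the remedy is to divide out one power of $h$. Because $\Phi^h$ is an integrator (Definition~\ref{def:integrator}) it is smooth in $h$ with $\Phi^0=\Id$, and these properties pass to $\dropped \Phi^h$ through the submersion $\pi_\mu$, which also makes $\dropped \Phi^h$ an integrator for $\dropped Z$. Hence in a local chart on $\P_\mu$ about $[\gamma(0)]$ one may write $\dropped \Phi^h([z])=[z]+h\,\Psi([z],h)$ with $\Psi$ smooth and $\Psi([z],0)=\dropped Z([z])$, and fixed points of $\dropped \Phi^h$ are exactly the zeros of $\Psi(\cdot,h)$. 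At $h=0$ we have $\Psi([\gamma(0)],0)=\dropped Z([\gamma(0)])=0$, since $[\gamma(0)]$ is the equilibrium of $\dropped Z$ corresponding to the relative equilibrium $\gamma$. Asymptotic stability of $\gamma(t)$ means $[\gamma(0)]$ is an asymptotically stable, hence hyperbolic, critical point of $\dropped Z$, so the partial Jacobian $D_{[z]}\Psi([\gamma(0)],0)=D\dropped Z([\gamma(0)])$ has all eigenvalues with strictly negative real part and is invertible. The implicit function theorem therefore yields a smooth branch $h\mapsto[\gamma_h(0)]$ of zeros with $[\gamma_0(0)]=[\gamma(0)]$, defined for $h\le h_0$: a fixed point of $\dropped \Phi^h$ converging to $[\gamma(0)]$.

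Finally I would lift this back upstairs. Choosing any $z_h\in\pi_\mu^{-1}([\gamma_h(0)])$, the fixed-point identity $\dropped \Phi^h([z_h])=[z_h]$ says that $\Phi^h(z_h)$ and $z_h$ lie on the same $\G_\mu$--orbit, so $\Phi^h(z_h)=g_h\cdot z_h$ for some $g_h\in\G_\mu$, and the $\G_\mu$--orbit through $z_h$ is the preserved relative equilibrium, lying near $\gamma$. The main obstacle is the near-identity degeneracy resolved in the third paragraph: everything hinges on isolating the factor $h$ and on the invertibility of $D\dropped Z([\gamma(0)])$. This is exactly why reduction is indispensable --- on the full space $Z$ is neutrally stable along the group-orbit directions, so $DZ(\gamma(0))$ carries eigenvalues on the imaginary axis and the implicit function argument fails; quotienting by $\G_\mu$ collapses precisely these neutral directions and leaves a hyperbolic fixed point. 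A minor point worth stating carefully is the identification of asymptotic stability with hyperbolicity; if one only assumes Lyapunov asymptotic stability one must separately rule out imaginary-axis eigenvalues of $D\dropped Z([\gamma(0)])$.
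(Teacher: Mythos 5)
Your proposal is correct and follows essentially the same route as the paper: use equivariance and momentum conservation to drop $\Phi^h$ to a map $\overline{\Phi}^h$ on the reduced space $\P_\mu$, where the relative equilibrium becomes an equilibrium of the reduced vector field $\overline{Z}$, and then exploit hyperbolicity (asymptotic stability, with $\eps>0$) to obtain a persistent nearby fixed point, which lifts back to a discrete relative equilibrium of $\Phi^h$. The only real difference is the technical device in the persistence step --- the paper appeals to structural stability of the hyperbolic fixed point of $\exp(h\overline{Z})$ under the $\mathcal{O}(h^{r+1})$ perturbation $\overline{\Phi}^h$, whereas you apply the implicit function theorem to $(\overline{\Phi}^h-\Id)/h$; your version handles the $h\to 0$ degeneracy (the hyperbolicity of the time-$h$ map itself degrades like $\mathcal{O}(h)$) more explicitly, which is a sharper rendering of the same argument, and your closing caveat about asymptotic stability versus hyperbolicity is a gap the paper shares (it resolves it only in the remark following the proposition, by requiring $\eps>0$ and a non-degenerate reduced dissipation tensor).
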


Notice that the above result 
heavily relies on hyperbolicity, which in turn requires
that the dissipation parameter~$\eps$ is strictly positive and
the reduced dissipation tensor $\dropped{\mathsf{d}}$ is non-degenerate,
since an equilibrium of a Hamiltonian system (typically) is elliptic.
Thus, the requirement is that the step
size is small enough in relation to the perturbation $\eps Y$, i.e., 
essentially that~$h^r \ll \eps$.

Based on Theorem~\ref{thm:monotonicity_mod_Ham_alg3S}
we now derive a refined result, 
which states that Algorithm~\ref{alg:3S} preserves a stable nearby relative equilibrium
for exponentially long times, independent of~$\eps$.
Recall that a relative equilibrium $\gamma(t)$ is stable if it corresponds
to some local minimum $[\gamma(0)]$ of the reduced
Hamiltonian~$\dropped H$ (that is, $\ud \dropped H([\gamma(0)])=0$ and
the Hessian of $\dropped H$ at $[\gamma(0)]$ is strictly positive
definite).
%
%
%

\begin{theorem}\label{thm:near_preservation_of_stable_relative_equilibria}
	Let $\mathsf{g},\mathsf{d},V$ be real analytic and 
	fulfill all the requirements in 
	Proposition~\ref{pro:G_invariance_of_triple}.
	Assume that $\gamma(t)$ is a relative
	equilibrium of system~\eqref{eq:perturbed_Hamiltonian_system_condense},
	such that the critical point $[\gamma(0)]$ of
	$\dropped Z$ is stable, and let $\dropped{\set U}$ be any compact neighborhood
	of $[\gamma(0)]$ such that the reduced Hamiltonian $\dropped H$ is strictly convex
	on~$\dropped{\set U}$. Then there exists another neighborhood $\dropped{\set V}\subset\dropped{\set U}$
	of $[\gamma(0)]$ and constants $h_0,\kappa,K>0$, independent of~$\eps$, such that
	the reduced numerical solution $[z_0],[z_1],\ldots,[z_n]$ generated by Algorithm~\ref{alg:3S}
	stays in $\dropped{\set U}$ over exponentially long times $n h\leq K \e^{\kappa/h}$
	whenever $[z_0]\in\dropped{\set V}$, $h\leq h_0$ and $\eps \geq 0$.
\end{theorem}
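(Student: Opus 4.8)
The plan is to use the reduced modified Hamiltonian as a Lyapunov function on the quotient $\P_\mu$, importing the one--step monotone decay from Theorem~\ref{thm:monotonicity_mod_Ham_alg3S}. First I would descend all relevant objects. Since $\mathsf g,\mathsf d,V$ meet the hypotheses of Proposition~\ref{pro:G_invariance_of_triple}, Proposition~\ref{pro:conservation_of_momentum_for_methods} gives $\PhiThreeS^h\in\DiffGJ(\coTQ)$, so $\PhiThreeS^h$ restricts to $\set J^{-1}(\mu)$ and drops to a map $\dropped{\Phi}^h\in\Diff(\P_\mu)$. Moreover, because $\PhiThreeS^h$ is exact symplectic and $\G$--equivariant, its formal modified vector field commutes with every $\xi_{\coTQ}=X_{J(\xi)}$; hence each Hamiltonian piece satisfies $X_{\pois{H_k,J(\xi)}}=[X_{H_k},\xi_{\coTQ}]=0$, so each $H_k$ is $\G$--invariant up to an additive constant, and therefore $H_{h,N}$ descends to a function $\dropped H_{h,N}$ on $\P_\mu$. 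Likewise $\mathsf d_{h,N}$ of Lemma~\ref{lem:diss_mod_Ham_for_SV} is $\G$--invariant, as it is built from the $\G$--invariant tensors $\mathsf g,\mathsf d$ and from $H_{h,N}$. Consequently the estimate of Theorem~\ref{thm:monotonicity_mod_Ham_alg3S} descends to $\dropped H_{h,N}(\dropped{\Phi}^h([z]))-\dropped H_{h,N}([z])\le h\lambda C\,\e^{-\gamma/h}$, with nonnegativity of the integrand furnished by $\mathsf d_{h,N}\ge 0$.

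The main obstacle is that Theorem~\ref{thm:monotonicity_mod_Ham_alg3S} is stated only for $z$ in a fixed compact $\set U\subset\coTQ$, whereas a relative equilibrium drifts along its group orbit, so the lifted trajectory $z_0,z_1,\ldots$ need not remain in any compact set even while its projection $[z_0],[z_1],\ldots$ stays near $[\gamma(0)]$. The resolution is that the one--step increment $H_{h,N}(\PhiThreeS^h(z))-H_{h,N}(z)$ and the integrand $\mathsf d_{h,N}$ are $\G$--invariant functions (because $H_{h,N}$ is $\G$--invariant while $\PhiThreeS^h$ and $\exp(s\eps Y)$ are $\G$--equivariant), whereas the bound $h\lambda C\,\e^{-\gamma/h}$ is a constant. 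Hence, choosing $\set U$ large enough that $\pi_\mu(\set U\cap\set J^{-1}(\mu))\supseteq\dropped{\set U}$, the estimate extends by $\G$--invariance from $\set U$ to its whole saturation $\G\cdot\set U$, so the descended estimate holds for every $[z]\in\dropped{\set U}$ with constants $\lambda,C,\gamma$ that are independent of $\eps$.

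It remains to run the trapping argument with $\dropped H_{h,N}$ as Lyapunov function. The backward error construction gives $H_{h,N}=H+\mathcal O(h^r)$ together with its derivatives, uniformly on compact sets, so on $\dropped{\set U}$ the function $\dropped H_{h,N}$ is, for $h$ small, $C^2$--close to the strictly convex $\dropped H$ and is therefore itself strictly convex with an interior minimizer near $[\gamma(0)]$. Writing $m_h=\min_{\dropped{\set U}}\dropped H_{h,N}$ and $b_h=\min_{\partial\dropped{\set U}}\dropped H_{h,N}$, strict convexity yields a gap $b_h-m_h\ge\delta>0$ that is uniform in $h$ and independent of $\eps$, since $\dropped H_{h,N}\to\dropped H$ uniformly and $\delta$ can be taken as half the corresponding gap of $\dropped H$. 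I would then set $\dropped{\set V}=\{[z]\in\dropped{\set U}:\dropped H_{h,N}([z])\le m_h+\delta/2\}$, a compact neighborhood of the minimizer contained in the interior of $\dropped{\set U}$ by convexity.

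For $[z_0]\in\dropped{\set V}$, the descended monotone estimate bounds $\dropped H_{h,N}([z_n])\le m_h+\delta/2+nh\lambda C\,\e^{-\gamma/h}$ as long as all iterates remain in $\dropped{\set U}$; an iterate can leave $\dropped{\set U}$ only by reaching the boundary level $b_h\ge m_h+\delta$, which forces $nh\lambda C\,\e^{-\gamma/h}\ge\delta/2$. Therefore the reduced solution stays in $\dropped{\set U}$ for all $n$ with $nh\le\frac{\delta}{2\lambda C}\,\e^{\gamma/h}=:K\e^{\kappa/h}$, where $K=\delta/(2\lambda C)$ and $\kappa=\gamma$ are independent of $\eps$, which is precisely the claimed exponentially long time. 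I expect the only delicate points beyond the $\G$--invariance descent to be the uniformity in $h$ and $\eps$ of both the convexity gap $\delta$ and the constants $\lambda,C,\gamma$, each of which follows from the $\eps$--independence already established in Theorem~\ref{thm:monotonicity_mod_Ham_alg3S} and from uniform closeness of $\dropped H_{h,N}$ to $\dropped H$.
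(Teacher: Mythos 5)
Your proposal follows essentially the same route as the paper's own proof: drop $\PhiThreeS^h$ and the truncated modified Hamiltonian to $\P_\mu$, use strict convexity of $\dropped{H}$ (hence of $\dropped{H}_{h,N}$ for $h\leq h_0'$) to build a sublevel-set trap between the interior minimum and the infimum of $\dropped{H}_{h,N}$ on $\pd\dropped{\set U}$, and then invoke the one-step estimate of Theorem~\ref{thm:monotonicity_mod_Ham_alg3S} so that escape from $\dropped{\set U}$ forces $nh\lambda C\e^{-\gamma/h}\geq \delta/2$, giving $K=\delta/(2\lambda C)$ and $\kappa=\gamma$ independent of $\eps$. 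In fact your treatment of one point is more careful than the paper's: Theorem~\ref{thm:monotonicity_mod_Ham_alg3S} is stated on a compact $\set U\subset\coTQ$, while $\pi_\mu^{-1}(\dropped{\set U})$ need not be compact; your observation that the one-step increment $z\mapsto H_{h,N}(\PhiThreeS^h(z))-H_{h,N}(z)$ is a $\G$-invariant function, so the estimate extends from $\set U$ to its saturation $\G\cdot\set U$, fills a step the paper applies silently.

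There is, however, one flawed inference in your argument: you deduce $\G$-invariance of the modified Hamiltonian pieces $H_k$ from $[X_{H_k},\xi_{\coTQ}]=0$. Commutation only gives that $\pois{H_k,J(\xi)}$ is locally constant, and a function satisfying $\LieD{\xi_{\coTQ}}H_k = c\neq 0$ does \emph{not} descend to $\P_\mu$; the footnote in Section~\ref{sec:conservation_of_momentum} of the paper ($\Q=\G=\realset$, $H(q,p)=q$, where $[X_H,\xi_{\coTQ}]=0$ yet $\pois{H,J(\xi)}=\xi\neq0$) is a counterexample to precisely this inference pattern, so "invariant up to an additive constant, and therefore descends" does not hold. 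The fact you need is nevertheless true, by a different argument: by the Baker-Campbell-Hausdorff structure underlying Lemma~\ref{lem:even_order_modified_hamiltonian_rev2}, every $H_k$ for the Störmer-Verlet splitting is built from iterated Poisson brackets of $T$ and $V$; under the hypotheses of Proposition~\ref{pro:G_invariance_of_triple} both $T$ and $V$ are $\G$-invariant, and Poisson brackets of $\G$-invariant functions are $\G$-invariant since $\LieD{\xi_{\coTQ}}\pois{F,G}=\pois{\LieD{\xi_{\coTQ}}F,G}+\pois{F,\LieD{\xi_{\coTQ}}G}$. Hence each $H_k$ is genuinely invariant and $H_{h,N}$ descends (the paper asserts this without proof). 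With that patch your proof is complete and agrees with the paper's.
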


\begin{proof}
	Since the integrator $\PhiThreeS^h$ defined by Algorithm~\ref{alg:3S} is
	$\G$--equivariant and momentum preserving, it drops to
	a reduced integrator $\dropped\PhiThreeS^h\in\Diff(\P_\mu)$.
	Further, there is a
	reduced modified Hamiltonian given by $\dropped H_{h,N}([z]):=H_{h,n}(z)$ for any $z\in[z]$
	(well defined since $H_{h,N}$ is $\G$--invariant). Since $\dropped H$ is strictly
	convex on $\set U$, we may chose $h_0'>0$ such that $\dropped H_{h,N}$ is also
	strictly convex on $\dropped{\set U}$ whenever $h\leq h_0'$, thus attaining a minimum $c_\text{min}=\dropped H_{h,N}([z_{h,N}^*])$ 
	in the interior of~$\dropped{\set U}$. Next, let $c_\text{max}=\inf_{[z]\in\pd\dropped{\set U}}\dropped H_{h,N}([z])$.
	It holds that $c_\text{max}>c_\text{min}$. Now, let $c_\text{mid}=(c_\text{max}-c_\text{min})/2$.
	It holds that the set $\dropped{\set V} = \{ [z]\in\dropped{\set U}: \dropped H_{h,N}([z])\leq c_\text{mid} \}$
	is a neighborhood of $[z^*_{h,N}]$ contained in the interior of $\dropped{\set U}$.
	Let $[z_0]\in\dropped{\set V}$.
	By Theorem~\ref{thm:monotonicity_mod_Ham_alg3S} there exists $h_0'',\gamma,C,\lambda >0$ independent of~$\eps$ such that
	$H_{h,N}([\PhiThreeS^h(z)])-H_{h,N}([z])\leq h C \e^{-\gamma/h}$ whenever 
	$[z]\in\dropped{\set U}$ and $h\leq \min(h_0',h_0'')$,
	which implies that $H_{h,N}([z_n])-H_{h,N}([z_0])\leq n h C \e^{-\gamma/h}$.
	Since $[z_0]\in\dropped{\set V}$ it holds that $H_{h,N}([z_n])\leq c_\text{max}$
	whenever $n h C \e^{-\gamma/h} \leq c_\text{max}-c_\text{mid}$, which yields the result
	since $H_{h,N}([z_n])\leq c_\text{max}$ implies that $[z_n]\in\dropped{\set U}$.
\end{proof}

The result states that if $\gamma(t)$ is a stable relative equilibrium solution
to system~\ref{eq:perturbed_Hamiltonian_system_condense}, then, for small enough step sizes,
the numerical solution generated by Algorithm~\ref{alg:3S} will ``almost'' (for exponentially long times)
preserve a modified relative equilibrium which is close to the exact one.
In contrast to Proposition~\ref{pro:conservation_of_hyperbolic_relative_equilibria},
where the analysis is based on structural stability of hyperbolic
critical points, the step size restriction in Theorem~\ref{thm:near_preservation_of_stable_relative_equilibria} 
is independent of~$\eps$. In particular,
the result holds in the conservative case when~$\eps=0$. Furthermore, no non-degeneracy assumption 
on the dissipation tensor~$\mathsf{d}$ is made: it may be
degenerate in any direction.

\subsection{Relation to Symplectic Integration of Problems with Attracting Invariant Tori} 
\label{sub:relation_to_symplectic_integration_of_problems_with_attracting_invariant_tori}

As mentioned in the introduction, there exists already a well developed theory
for symplectic integration of Arnold-Liouville integrable
systems, perturbed in such a way that only one invariant torus persist,
and becomes attractive. For a thorough treatment of this theory,
see~\cite[Chap.~XII]{HaLuWa2006}.
In this section we give a short review
of that framework, and compare it to the setting and analysis presented in the current paper.
The two approaches are related, but not overlapping.

Consider a perturbation of an integrable Hamiltonian system,
which in \emph{action-angle variables} (cf.~\cite[Chap.~X]{HaLuWa2006})
can be written
\begin{equation}\label{eq:perturbed_integrable_system}
	\begin{split}
		\dot{\vect{a}} &= \eps \vect r(\vect{a},\vect{\theta})
		\\
		\dot{\vect{\theta}} &= \vect\omega(\vect{a}) + \eps \vect\rho(\vect{a},\vect{\theta})
	\end{split}
	\qquad\quad
	\vect{a}\in\realset^n, \vect{\theta}\in\mathbb{T}^n .
\end{equation}
Further, assume there is a point $\vect a^*$ such that
the frequencies $\vect{\omega}(\vect a^*)$ are \emph{diophantine} with exponent~$\nu$ (cf.~\cite[Chap.~X]{HaLuWa2006}),
and such that the \emph{angular average} $\bar{\vect r}(\vect a^*) := \int_{\mathbb{T}^n}\vect{r}(\vect{a}^*,\vect{\theta})\ud\vect\theta$
is small and its Jacobian $A=\bar{\vect r}'(\vect a^*)$ has strictly negative real part.
Then system~\ref{eq:perturbed_integrable_system} has an invariant torus which attracts
a neighborhood of~$\{\vect{a}^*\}\times\mathbb{T}^n$ with exponential rate proportional to~$\eps$.
Now,~\cite[Theorem~5.2, Chap.~XII]{HaLuWa2006} states that if a symplectic integrator
is applied to system~\eqref{eq:perturbed_integrable_system} expressed in
canonical variables $(\vect q,\vect p)\in\realset^{2n}$, then the numerical solution has
a modified attractive invariant torus as long as $h^r \leq c_0 \abs{\log\eps}^{-\kappa}$,
where $\kappa = \max(\nu+n+1,r)$ and~$c_0$ is a constant independent of~$h,\eps$.

Although Arnold-Liouville integrable systems are related to relative equilibria, in the
sense that the former is a special case of the latter (see~Remark~\ref{rem:abelian_reduction}),
systems of the form~\eqref{eq:perturbed_integrable_system} are only overlapping
with systems of the form~\eqref{eq:perturbed_Hamiltonian_system_condense} in the trivial
case when the attractive manifold consists of only one single point (corresponding
to an asymptotically stable equilibrium). Indeed, the setting in this paper
allows for a degenerate dissipation tensor, but has the requirement that
the perturbation vector field $\eps Y$ fulfills $\LieD{Y}H \leq 0$ for
the Hamiltonian~$H$. In contrast, the setting in~\cite[Chap.~XII]{HaLuWa2006}
does not require that the perturbation is monotonely decreasing
energy, but instead that $\bar{\vect{r}}$ in equation~\eqref{eq:perturbed_integrable_system} 
is non-degenerate. As a consequence, in order for system~\eqref{eq:perturbed_Hamiltonian_system_condense}
to overlap with system~\eqref{eq:perturbed_integrable_system} it is needed
that the perturbed vector field fulfills $\LieD{Y}H < 0$, i.e.,
that the dissipation tensor~$\mathsf{d}$ has full rank.
In the common case of systems that have a locally minimal energy point in phase space, this means that 
the corresponding attractive invariant manifold is must be an
asymptotically stable equilibrium point.

As an example, take the Van der Pol equation studied in~\cite[Chap.~XII]{HaLuWa2006}.
Here~$\coTQ=\realset^2$, with
the harmonic oscillator Hamiltonian~$H(q,p) = p^2/2 + q^2/2$, and the
perturbation is described by the tensor~$\mathsf{d}_q(u,v) = (1-q^2)u v$ (i.e., the perturbation is linear in~$p$ 
just as in this paper).
However, for this tensor it \emph{does not} hold that $\LieD{Y}H\leq 0$, since the tensor switches
signature at $q^2=1$. This is indeed the \emph{very reason} that the Van der Pol system has a non-trivial
attractive invariant manifold: the requirement~$\LieD{Y}H< 0$ would force the invariant manifold
to be the equilibrium point~$(q,p) = (0,0)$.
Likewise, the example considered in Section~\ref{sub:elastic_pendulum} below
does not fit the setting in~\cite[Chap.~XII]{HaLuWa2006}, since the
unperturbed system is not integrable, and since the dissipation tensor
is degenerate.

In conclusion, one can say that the main differences between the
type of systems analyzed in~\cite[Chap.~XII]{HaLuWa2006} versus
the ones in this paper, is the form of the perturbations,
and that the
former analysis requires the unperturbed system
to be near integrable, whereas the latter requires the unperturbed system to be of standard form.

Furthermore, the results of the actual numerical analysis
is of different character. This paper gives results on
asymptotically correct dissipation rate
(Theorem~\ref{thm:energy_dissipation_rate_general_integrator} and 
Theorem~\ref{thm:monotonicity_mod_Ham_alg3S}),
and on near preservation of stable relative equilibrium for exponentially
long times independent of the perturbation parameter~$\eps$ 
(Theorem~\ref{thm:near_preservation_of_stable_relative_equilibria}). 
The analysis in~\cite[Chap.~XII]{HaLuWa2006} gives
results on near preservation of attractive invariant tori for indefinite time, but with
step sizes depending weakly on~$\eps>0$.
Also, as pointed out in~\cite[Chap.~XII]{HaLuWa2006}, any numerical 
integrator for system~\eqref{eq:perturbed_integrable_system} preserve a nearby
attractive manifold as long as $h^r \ll \eps$. In contrast, 
for systems with relative equilibria 
the condition $h^r \ll \eps$ is not enough in order for a general integrator to preserve 
nearby relative equilibria.
Indeed, as derived in 
Proposition~\ref{pro:conservation_of_hyperbolic_relative_equilibria} above, the
integrator should be~$\G$--invariant and momentum preserving to assert that
the numerical solution has nearby relative equilibria when~$h^r \ll \eps$.

%
%


\subsection{Numerical Example: Elastic 3--D Pendulum} 
\label{sub:elastic_pendulum}


In this example we consider an elastic pendulum which is affected by gravity.
The configuration space is $\Q=\realset^3\backslash \{ 0\}$.
There is a small Rayleigh damping term for the spring, but otherwise the pendulum
is not damped. Thus, the dissipation is only active in the direction of the spring.
The $\eps$~parameter takes the rôle of the damping coefficient for the spring.
Using Cartesian coordinates 
the equations of motion are:
\begin{equation} \label{eq:elastic_pendulum}
	\left\{
		\begin{split}
			\dot \vq &= \frac{\vp}{m} \\
			\dot \vp &= -k \Big( 1- \frac{\ell}{\abs{\vq}} \Big)\vq + m\vect{g} -
			\eps \frac{\vq \vq^\trans \vp }{m \abs{\vq}^2}
		\end{split}
	\right.
\end{equation}
where $m$ is the mass, $\vect{g}\in\realset^3$ is the gravity, $k$ is the stiffness
of the spring, and $\ell$ is the length of the ``unstretched'' pendulum.
Notice that the damping term only acts in the opposite direction of $\vq$.
Also, when $\vp$ is perpendicular to $\vq$ the damping vanishes.
In terms of the presented framework we have 
\[
	\mathsf{g}_q(u,v)=\frac{m}{2} \vect u^\trans \vect v,\qquad
	\mathsf{d}_q(u,v) = \frac{\vect u^\trans \vq\vq^\trans\vect v}{\abs{\vq}^2},\qquad
	V(q) = \frac{k}{2}(\ell-\abs{\vq})^2-\vq^\trans\vect g .
\]
Throughout the rest of the example, we work with unit constants:
$m=k=\ell=1$, and $\vect g = (0,0,-1)$.

\begin{remark}
	For $\eps=0$ the elastic pendulum problem is a non-integrable
	Hamiltonian system,
	meaning it has chaotic behavior; see~\cite[Chap.~5]{Ho2008a}.
\end{remark}

The problem has an $S^1$--symmetry by rotation about the $z$--axis.
Indeed, the setting is $\G=S^1$ 
with action on $\Q$ generated by $\xi_\Q(\vq) = \xi \, \vect e_3\times\vq$,
where $\vect e_3 = (0,0,1)$.
It is straightforward to check that $\mathsf{g},\mathsf{d}$ and $V$
fulfill all the conditions in Proposition~\ref{pro:G_invariance_of_triple}.
Thus, the flow evolves on $\DiffGJ(\coTQ)$.
The corresponding conserved momentum map is the 
\emph{azimutal angular momentum} given by $J_3(\vq,\vp) = (\vq\times\vp)_3$.
In addition, it follows from Proposition~\ref{pro:conservation_of_momentum_for_methods}
that Algorithms~\ref{alg:3S}--\ref{alg:RKS} applied to this problem
exactly conserves momentum.

The problem is integrated with Algorithms~\ref{alg:3S}--\ref{alg:RKS}
and Heun's method, with the following initial data:
$\vq_0 = (0,1.55884573,-0.6)$ and $\vp_0=(1.34164079,0,0)$.
The energy behavior for various step sizes and
choice of $\eps$ is shown in 
Figure~\ref{fig:elastic_pendulum_plots_energyplot_withlabels}.
Notice that the qualitative behavior of
Algorithms~\ref{alg:3S}--\ref{alg:RKS}, in terms of
mean energy dissipation rate, is superior to the result
with Heun's method. 
Indeed, contrary to Heun's method, the mean energy
dissipation rate for the suggested methods 
does not depend much on the step size.
Figure~\ref{fig:elastic_pendulum_plots_projection_plot}
shows the trajectory of the pendulum in the $x$--$y$--plane.
Notice that that Heun's method does not ``stabilize''
around a limit cycle, which is the case for Algorithms~\ref{alg:3S}--\ref{alg:RKS}.
Instead, the trajectory drifts outwards in an
exponentially increasing fashion.
This is because Heun's method does not preserve a nearby relative
equilibrium, whereas the analyzed methods do. We now
continue with a discussion of relative equilibrium
for the elastic pendulum.

\begin{figure}
	\centering
	{\bf Energy diagrams for elastic pendulum} \\
	\includegraphics[width=0.98\textwidth]{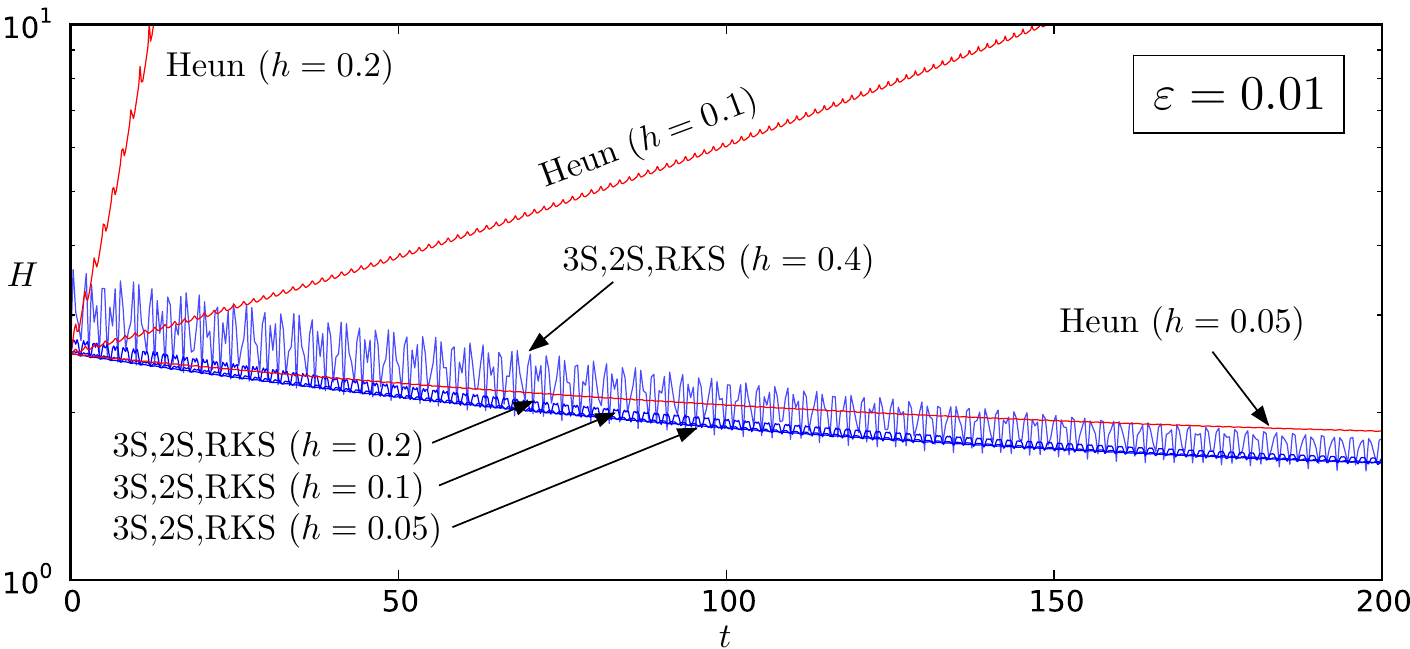}\\
	\includegraphics[width=0.98\textwidth]{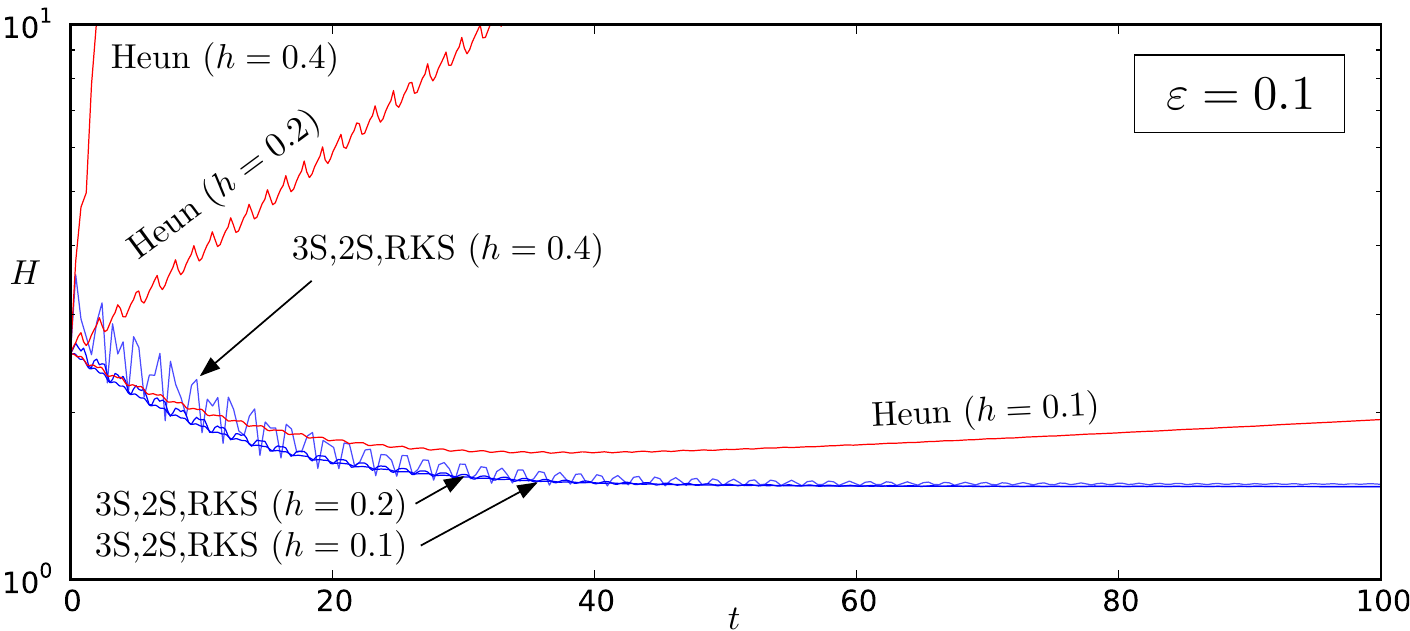}
	\caption{Energy diagrams for the elastic pendulum, using 
	Algorithms~\ref{alg:3S}--\ref{alg:RKS} and Heun's method,
	with $\eps=0.01$ (upper) and $\eps=0.1$ (lower).
	The energy oscillates for Algorithms~\ref{alg:3S}--\ref{alg:RKS},
	but shows that qualitatively correct behavior, independently of the step size.
	For Heun's method, the result depends heavily on the step size,
	and is qualitatively incorrect, since energy is increasing.
	}
	\label{fig:elastic_pendulum_plots_energyplot_withlabels}
\end{figure}

\begin{figure}
	\centering
	{\bf Projected $x$--$y$--trajectory for elastic pendulum} \\[2ex]
	\begin{tabular}{cc}
		\qquad 3S,2S,RKS ($h=0.1$, $\eps=0.1$) & \qquad Heun ($h=0.1$, $\eps=0.1$) \\
		\includegraphics[width=0.45\textwidth]{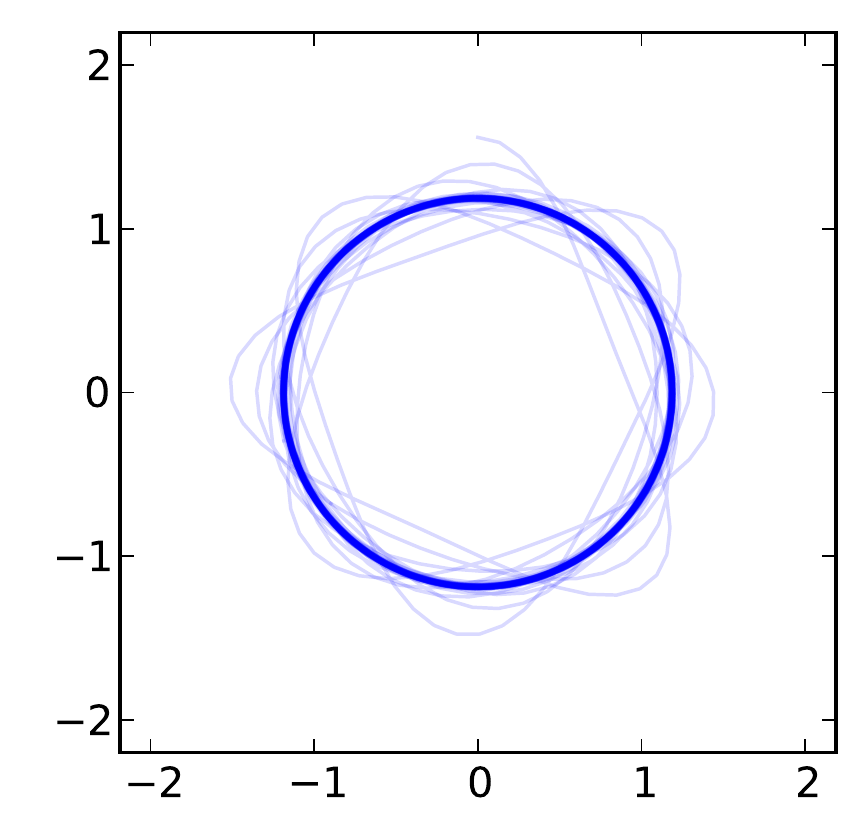} &
		\includegraphics[width=0.45\textwidth]{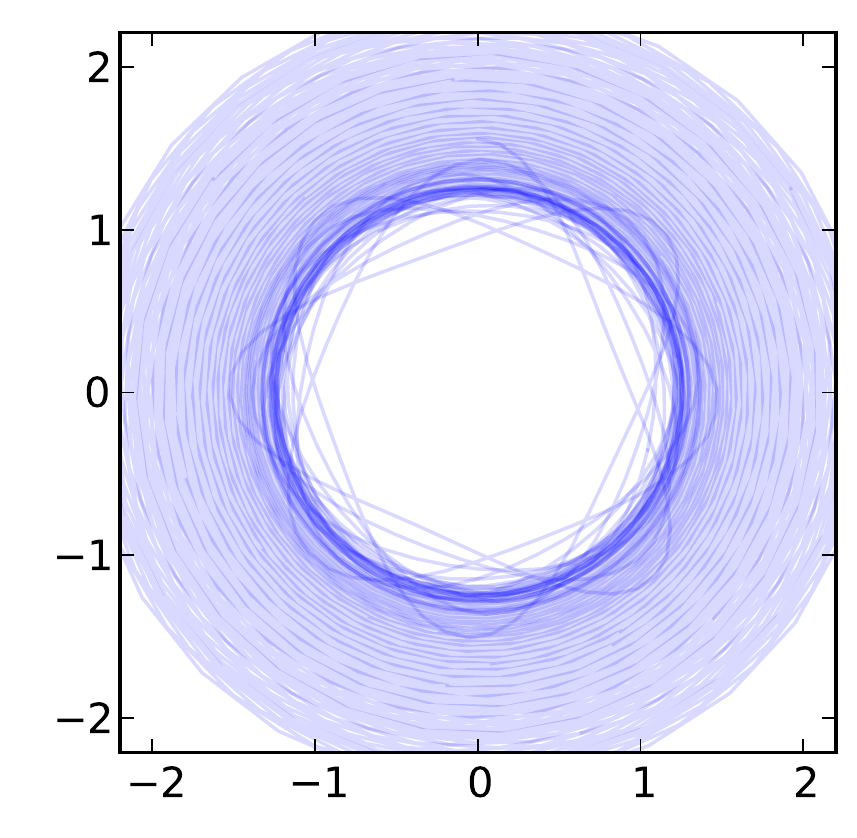} \\
		\qquad 3S,2S,RKS ($h=0.1$, $\eps=0.01$) & \qquad 3S,2S,RKS ($h=0.4$, $\eps=0.01$) \\
\includegraphics[width=0.45\textwidth]{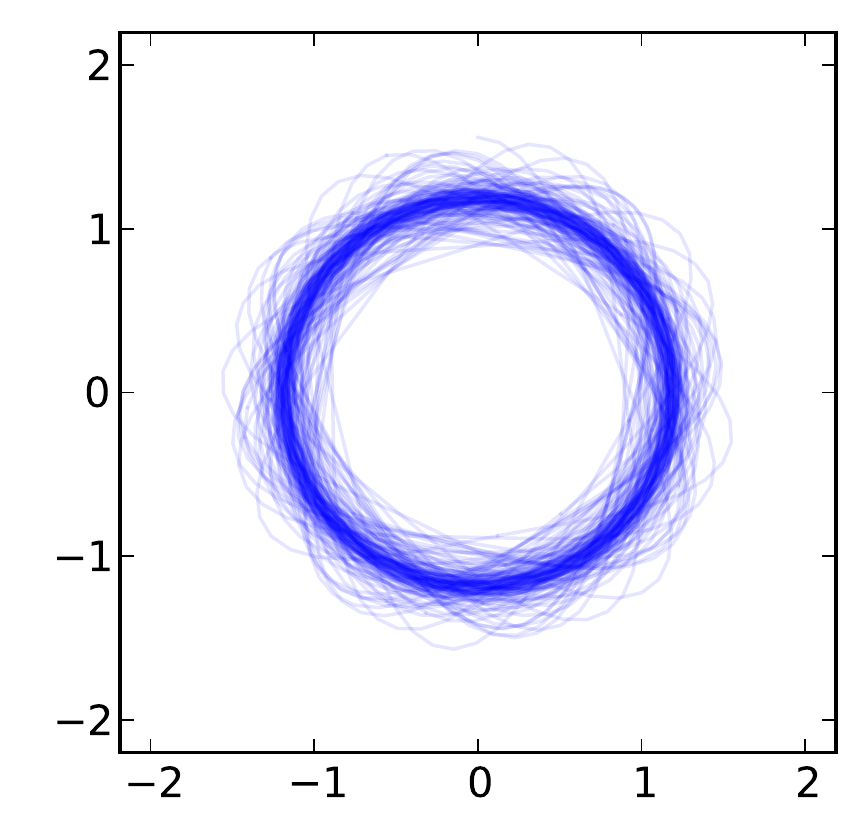} &
\includegraphics[width=0.45\textwidth]{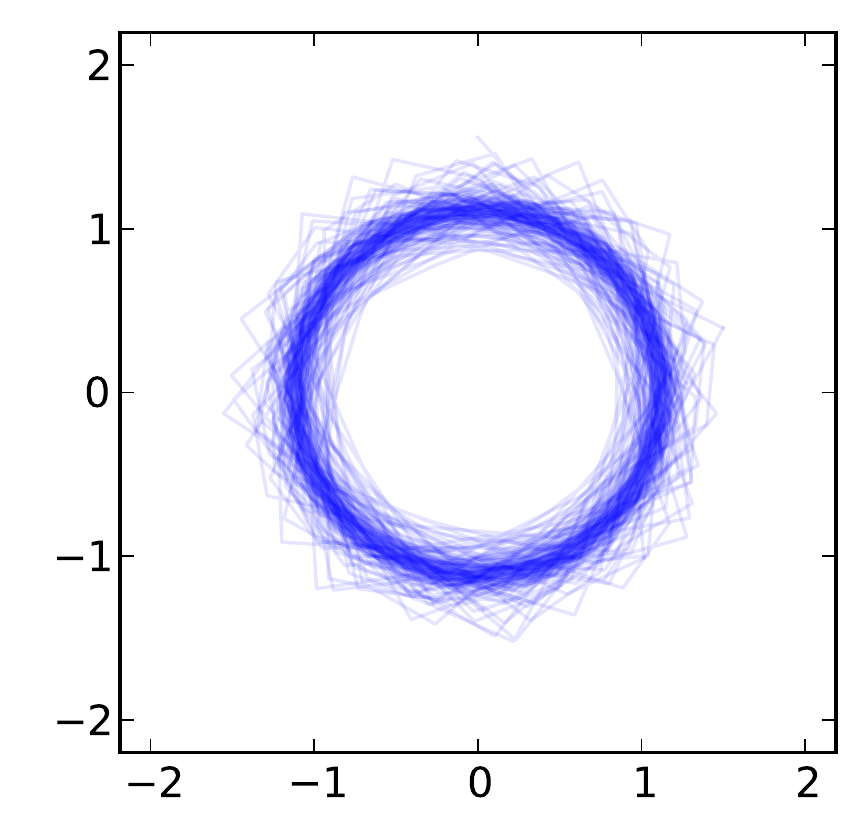}
	\end{tabular}
	\caption{Trajectories in the $x$--$y$--plane for the elastic pendulum,
	using Algorithms~\ref{alg:3S}--\ref{alg:RKS} and Heun's method.
	With Heun's method, the trajectory does not stay bounded,
	but drifts outwards.
	With Algorithms~\ref{alg:3S}--\ref{alg:RKS} the solution approaches
	a limit cycle (corresponding to a relative equilibrium).
	The exact correct limit cycle is not attained, but instead
	a modified one, in accordance with the analysis in
	Section~\ref{sec:near_conservation_of_relative_equilibria}.
	}
	\label{fig:elastic_pendulum_plots_projection_plot}
\end{figure}

\begin{figure} 
	\centering
	{\bf \quad Modified relative equilibria for Algorithms~\ref{alg:3S}--\ref{alg:RKS}} 
	\\[1ex]
	\includegraphics[width=0.98\textwidth]{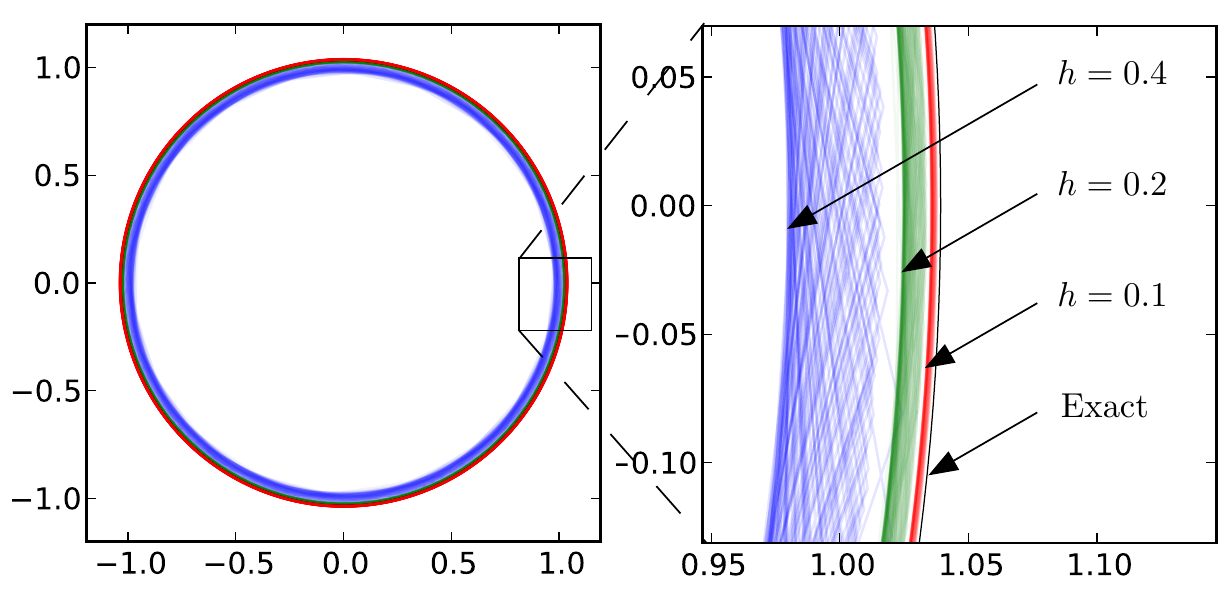}
	\caption{Near preservation of relative equilibria for 
	Algorithms~\ref{alg:3S}--\ref{alg:RKS}, with~$\eps=0.1$ (the results between
	the methods are inseparable). Initial data is given
	on the exact relative equilibrium cycle. The trajectory drifts
	towards a modified relative equilibrium cycle $\mathcal{O}(h^2)$
	away from the exact cycle. The result verifies the
	analysis in Section~\ref{sec:near_conservation_of_relative_equilibria}.
	}
	\label{fig:elastic_pendulum_plots_trajectory_near_rel_eq}
\end{figure}

\paragraph{Relative equilibria}

For each momentum value
$\mu\in\g=\T_\Id S^1 = \realset$ there is a corresponding
relative equilibrium for the elastic pendulum, 
corresponding to rotation with constant
angular velocity about the vertical axis.

Turning to cylindrical coordinates $(\rho,\phi,z)$
on $\Q$, and corresponding co-tangent lifted
momenta $(p_\rho,p_\phi,p_z)$ on $\T^*_q\Q$, the governing
equations~\eqref{eq:elastic_pendulum} take the form
\[
	\left\{
	\begin{split}
		\dot\rho &= \frac{p_\rho}{m}, & \dot p_\rho &= 
			k\rho \Big( \frac{\ell}{\sqrt{\rho^2+z^2}}-1\Big) - \frac{p_\phi^2}{m\rho^3} 
			- \eps\rho \frac{z p_z+\rho  p_{\rho }}{m (z^2+\rho^2)}
		\\
		\dot\phi &= \frac{p_\phi}{m \rho^2}, &  \dot p_\phi &= 0 
		\\
		\dot z &= \frac{p_z}{m}, & \dot p_z &= 
			k z \Big( \frac{\ell}{\sqrt{\rho^2+z^2}}-1\Big) - mg
			- \eps z \frac{z p_z+\rho  p_{\rho }}{m (z^2+\rho^2)}
	\end{split}
	\right.
\]
Notice that $J_3(q,p) = p_\phi$.
Due to the $S^1$--invariance of $\mathsf{g},\mathsf{d},V$,
these equations reduce (in accordance with co-tangent reduction theory)
to a system on $\T(\Q/S^1)$. 
We may use the coordinates $(\rho,z,p_\rho,p_z)$,
in which case the reduced equations are obtained by simply removing
the equations for~$\phi$ and~$p_\phi$, and replacing~$p_\phi$
with the momentum value~$\mu$.

The equilibrium on the reduced space, corresponding to momentum value $\mu$, 
is given for initial data $\gamma(0)=(\rho,z,p_\rho,p_z)$ that fulfill
the relations
%
\[
	\left.
	\begin{split}
		k\rho \Big( \frac{\ell}{\sqrt{\rho^2+z^2}}-1\Big) - \frac{\mu^2}{m\rho^3} & = 0 ,
		& 
		p_\rho & = 0
		\\
		k z \Big( \frac{\ell}{\sqrt{\rho^2+z^2}}-1\Big) - mg & = 0 , 
		&
		p_z &= 0 .
	\end{split}
	\right.
\]
Thus, the corresponding relative equilibrium solution 
$\gamma(t)=\exp(t\xi)\cdot\gamma(0)$
is given~by
\[
	\left\{
	\begin{split}
		\rho(t) &= \rho(0),  & p_\rho(t) &= 0 \\
		\phi(t) & = \phi(0) + \frac{t \mu}{m (\rho(0))^2},  & p_\phi(t) &= \mu \\
		z(t) & = z(0),  & p_z(t) &= 0 .
	\end{split}
	\right.
\]
Since the action is given by~$\exp(t \xi)\cdot (\rho,\phi,z) = (\rho,\phi+t\xi,z)$,
it holds that $\xi=\mu/(m\rho(0)^2)$ for the equilibrium solution.

The $x$--$y$--trajectory using Algorithms~\ref{alg:3S}--\ref{alg:RKS}
and $\eps=0.1$,
with initial data 
\[
	\vq_0 = (0,1.0392304845413263,-0.6),\quad  \vp_0 = (1.3416407864998736,0,0) ,
\]
corresponding
to exact relative equilibrium for momentum
value~$\mu = p_x$,
is given in Figure~\ref{fig:elastic_pendulum_plots_trajectory_near_rel_eq}. 
Again, the result between the methods
is inseparable.
Notice that the trajectories slightly drift off the true
relative equilibrium, toward a modified relative equilibrium.
The rank of~$\mathsf{d}$ is one,
meaning that the dissipation is degenerate even after
the reduction (it is zero if $(z,\rho)$ and $(p_z,p_\rho)$ are orthogonal).
Thus, we cannot apply Proposition~\ref{pro:conservation_of_hyperbolic_relative_equilibria}
to explain the stability of the modified relative equilibrium
for Algorithm~\ref{alg:2S}--\ref{alg:RKS}.
However, notice that the reduced Hamiltonian
is strictly convex in a neighborhood of the
critical point~$[\gamma_h(0)]$.
Thus, by Theorem~\ref{thm:near_preservation_of_stable_relative_equilibria},
the behavior seen in
Figure~\ref{fig:elastic_pendulum_plots_energyplot_withlabels} 
is fully explained for Algorithm~\ref{alg:3S}.

\section{Conclusions} 
\label{sec:conclusions}

For Hamiltonian problems on standard form perturbed by Rayleigh damping,
three numerical integration schemes were analyzed.
When the dissipation parameter is zero, all the methods
reduce to the Störmer-Verlet scheme.
Based on backward error analysis, we gave result on:
\begin{itemize}
	\item Asymptotically correct dissipation rate of the modified
	Hamiltonian (Theorem~\ref{thm:energy_dissipation_rate_general_integrator} and 
	Theorem~\ref{thm:monotonicity_mod_Ham_alg3S}).
	\item Monotone decay of the modified Hamiltonian for small enough
	step sizes (Theorem~\ref{thm:monotonicity_mod_Ham_alg3S}).
	\item Preservation of symmetries, and conservation of momentum
	(Proposition~\ref{pro:conservation_of_momentum_for_methods}).
	\item Near preservation of relative equilibria 
	(Theorem~\ref{thm:near_preservation_of_stable_relative_equilibria}).
\end{itemize}
The theoretical results were verified by numerical examples of
a damped planar pendulum, and a damped elastic 3--D~pendulum.




\section*{Acknowledgment} 
\label{sec:acknowledgment}
The authors would like the thank Claus Führer, Christian Lubich,
and Ignacio Romero for helpful and inspiring discussions during
the work with this paper. 
Also, we would like to thank the reviewers for helpful suggestions.
The work was supported in part by
the Royal Physiographic Society in Lund, Hellmuth Hertz' foundation grant,
and the Marsden Fund in New Zealand.

\bibliographystyle{../../Papers/amsplainshort.bst} 
\bibliography{../../Papers/References}


\end{document}